\newtheorem{thm}{Theorem}
\newtheorem{prop}[thm]{Proposition}
\newtheorem{lemma}[thm]{Lemma}
\newtheorem{corollary}[thm]{Corollary}
\newtheorem{dfn}[thm]{Definition}
\newcommand{\C}[1]{\boldsymbol{\mathcal{#1}}}
\newcommand{\ud}{\mathrm{d}}
\newcommand{\ds}{\displaystyle}
\newcommand{\dlb}{\llbracket}
\newcommand{\drb}{\rrbracket}
\newcommand{\ov}[1]{\overline{#1}}
\newcommand{\B}[1]{\mathds{#1}}
\newcommand{\oB}[1]{\ov{\mathds{#1}}}
\newcommand{\oC}[1]{\ov{\C{#1}}}
\newcommand{\wh}[1]{\widehat{#1}}
\newcommand{\wt}[1]{\widetilde{#1}}
\newcommand{\myeq}[1]{{\rm (\ref{#1})} on page \pageref{#1}}
\DeclareMathOperator*{\ess}{ess}
\newcommand{\ts}{\textstyle}
\newcommand{\tint}{{\ts \int}}
\renewcommand{\epsilon}{\varepsilon}
\begin{document}

\begin{frontmatter}
\title{New bounds for $k$-means and information $k$-means}
\runtitle{New bounds for $k$-means}

\begin{aug}
\author[A]{\fnms{Gautier} \snm{Appert}\ead[label=e1]{gautier.appert.chess@gmail.com}}
\and
\author[B]{\fnms{Olivier} \snm{Catoni}\ead[label=e2]{olivier.catoni@ensae.fr}}

\address[A]{SAMM, Universit\'{e} Paris 1 Panth\'{e}on-Sorbonne, France, \printead{e1}}

\address[B]{CNRS -- CREST, UMR 9194, Universit\'{e} Paris Saclay, France, \printead{e2}}
\end{aug}

\begin{abstract}
In this paper, we derive a new dimension-free non-asymptotic upper bound 
for the quadratic $k$-means excess risk 
 related to the quantization of an i.i.d sample in a separable Hilbert space.
We improve the bound of order $\C{O} \bigl( k / \sqrt{n} \bigr)$ of 
Biau, Devroye and Lugosi, recovering the rate $\sqrt{k/n}$ 
that has already been proved by Fefferman,
Mitter, and Narayanan and by Klochkov,
 Kroshnin and Zhivotovskiy 
but with worse log factors and constants.
More precisely, we bound the mean excess risk of an empirical 
minimizer by the explicit upper bound $16 B^2
\log(n/k) \sqrt{k \log(k) / n}$, in the bounded 
case when $\B{P}( \lVert X \rVert \leq B) = 1$.
This is essentially optimal 
up to logarithmic factors since a lower bound 
of order $\C{O} \bigl( \sqrt{k^{1 - 4/d}/n} \bigr)$ is known 
in dimension $d$.
Our technique of proof is based on the linearization of the $k$-means 
criterion through a kernel trick and on PAC-Bayesian inequalities. 
To get a $1 / \sqrt{n}$ speed, we introduce a new PAC-Bayesian chaining 
method replacing the concept of $\delta$-net with 
the perturbation of the parameter by an infinite dimensional
Gaussian process.

In the meantime, 
we embed the usual $k$-means criterion into a broader family
built upon the Kullback divergence and its underlying 
properties. This results in a new algorithm that we named 
\textit{information $k$-means}, well suited to the clustering of bags 
of words. 
Based on considerations from information theory, 
we also introduce a new bounded $k$-means criterion
that uses a scale parameter but satisfies a 
generalization bound that does not require any boundedness
or even integrability conditions on the sample.
We describe the counterpart of Lloyd's algorithm 
and prove generalization bounds for these new 
$k$-means criteria. 
\end{abstract}

\begin{keyword}[class=MSC2020]
\kwd[Primary ]{62H30}
\kwd{62H20}
\kwd{62C99}
\kwd[; secondary ]{62B10}
\end{keyword}

\begin{keyword}
\kwd{$k$-means criterion}
\kwd{vector quantization}
\kwd{PAC-Bayesian bounds}
\kwd{chaining}
\kwd{dimension-free bounds}
\kwd{empirical risk minimization}
\kwd{Hilbert space}
\kwd{kernel trick}
\kwd{centroid}
\kwd{Kullback-Leibler divergence}
\kwd{information theory}
\end{keyword}

\end{frontmatter}

\section*{General notation}
\addcontentsline{toc}{section}{General notation}

We will use the following notation throughout this document.

On some measurable probability space $\Omega$, we will consider various
random variables $X : \Omega \rightarrow \C{X}$, 
$Y:\Omega \rightarrow \C{Y}$, etc. that are nothing but measurable functions.
We will also consider several probability measures on $\Omega$, and typically 
two measures $\B{P}$ and $Q \in \C{M}_+^1(\Omega)$, where $\B{P}$ 
describes the usually unknown data distribution and $Q$ describes 
an estimation of $\B{P}$. Then we will use the short notation $\B{P}_X$
for the push forward measure $\B{P} \circ X^{-1}$, that is the 
law of $X$. Similarly we will 
let $Q_X = Q \circ X^{-1}$. In the same way $\B{P}_{X, Y} \in 
\C{M}_+^1( \C{X} \times \C{Y})$ will 
be the joint distribution of the couple $(X, Y)$ under $\B{P}$
and $\B{P}_{Y \, | \, X}$ the corresponding regular conditional 
probability measure of $Y$ knowing $X$ when it exists.
We will always work under sufficient hypotheses to ensure that
the decomposition 
\begin{equation}
\label{eq:condProba}
\B{P}_{X, \, Y} = \B{P}_X \B{P}_{Y \, | \, X} 
\end{equation}
is valid, meaning that for any bounded measurable function 
$f(X, Y)$
\[
\int f \, \ud \B{P}_{X, \, Y} = \int 
\biggl( \int f \ud \B{P}_{Y \, | \, X} \biggr) \, \ud \B{P}_X.
\] 
Moreover, we will use the short notation
\[
\int f \, \ud \B{P}_{X, \, Y} = \B{P}_{X, \, Y} ( f ),
\] 
so that the previous formula becomes
\[
\B{P}_{X, \, Y} (f) = \B{P}_X \bigl[ \B{P}_{Y \, | \, X} (f) \bigr].
\] 
We will often use the Kullback Leibler divergence
\[
\C{K} \bigl( Q, \B{P} \bigr) = \begin{cases}
\ds Q \biggl[ \log \biggl( \frac{\ud Q}{\ud \B{P}} \biggr) \biggr] 
& \text{ when } Q \ll \B{P},\\
+ \infty & \text{ otherwise.}
\end{cases}
\] 
We will always be in this article in a situation where the decomposition
\begin{lemma}
\label{lem:1.2}
\begin{align*}
\C{K} \bigl( Q_{X,Y}, \B{P}_{X,Y} \bigr)
&=\C{K}\bigl(Q_{X}, \B{P}_{X} \bigr) + Q_{X} \bigl[
\C{K}\bigl(Q_{Y\, | \,X}, \B{P}_{Y\, | \,X}
\bigr)\bigr]\\
&=\C{K}\bigl(Q_{Y}, \B{P}_{Y} \bigr)+Q_{Y} \bigl[\C{K}\bigl(Q_{X\, | \,Y},\B{P}_{X\, | \,Y}
\bigr)\bigr]
\end{align*}
\end{lemma}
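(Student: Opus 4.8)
The plan is to reduce the statement to the factorization of Radon--Nikodym densities along the disintegration \eqref{eq:condProba}; by the symmetry between $X$ and $Y$ it suffices to prove the first equality. I would start with the case $Q_{X,Y} \not\ll \B{P}_{X,Y}$, where the left-hand side is $+ \infty$: then either $Q_X \not\ll \B{P}_X$, so that $\C{K}\bigl(Q_X, \B{P}_X\bigr) = + \infty$ and the right-hand side is $+\infty$ too, or $Q_X \ll \B{P}_X$ but, on a set of positive $Q_X$-measure, $Q_{Y \, | \, X = x} \not\ll \B{P}_{Y \, | \, X = x}$, so that the integrand $\C{K}\bigl(Q_{Y \, | \, X = x}, \B{P}_{Y \, | \, X = x}\bigr)$ equals $+\infty$ there and hence $Q_X \bigl[ \C{K}(Q_{Y \, | \, X}, \B{P}_{Y \, | \, X}) \bigr] = +\infty$; in both subcases the identity holds in $[0, + \infty]$.

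In the remaining case $Q_{X,Y} \ll \B{P}_{X,Y}$, we also have $Q_X \ll \B{P}_X$ and, for $\B{P}_X$-almost every $x$, $Q_{Y \, | \, X = x} \ll \B{P}_{Y \, | \, X = x}$. The key step is to show that
\[
(x,y) \longmapsto \frac{\ud Q_X}{\ud \B{P}_X}(x) \, \frac{\ud Q_{Y \, | \, X = x}}{\ud \B{P}_{Y \, | \, X = x}}(y)
\]
is a version of $\ud Q_{X,Y} / \ud \B{P}_{X,Y}$: one integrates an arbitrary bounded measurable $f(x,y)$ against this candidate density times $\B{P}_{X,Y}$, uses \eqref{eq:condProba} under $\B{P}$ to carry out the inner integral over $y$, and recognizes the outcome as $Q_{X,Y}(f)$ after a further application of \eqref{eq:condProba} under $Q$. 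Taking logarithms turns the product of densities into a sum of two log-densities; integrating against $Q_{X,Y}$, splitting by \eqref{eq:condProba} under $Q$, and using that the negative part of each log-density is integrable (so the splitting stays valid even if a term is $+\infty$) yields
\[
\C{K}\bigl(Q_{X,Y}, \B{P}_{X,Y}\bigr) = Q_X \Bigl[ \log \tfrac{\ud Q_X}{\ud \B{P}_X} \Bigr] + Q_X \Bigl[ Q_{Y \, | \, X} \bigl[ \log \tfrac{\ud Q_{Y \, | \, X}}{\ud \B{P}_{Y \, | \, X}} \bigr] \Bigr] = \C{K}\bigl(Q_X, \B{P}_X\bigr) + Q_X \bigl[ \C{K}\bigl(Q_{Y \, | \, X}, \B{P}_{Y \, | \, X}\bigr) \bigr].
\]

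The main obstacle is measurability rather than any inequality: one needs $x \mapsto \C{K}\bigl(Q_{Y \, | \, X = x}, \B{P}_{Y \, | \, X = x}\bigr)$ to be measurable, so that $Q_X[\,\cdot\,]$ of it makes sense, and a jointly measurable version of $(x,y) \mapsto \ud Q_{Y \, | \, X = x} / \ud \B{P}_{Y \, | \, X = x}(y)$ for the factorization to be meaningful. Both are standard once regular conditional probabilities exist, which is exactly the standing assumption behind \eqref{eq:condProba}: one constructs the conditional density as a pointwise limit of ratios of partial integrals of simple functions along a countably generated filtration, after which the conditional divergence is the partial $\B{P}_{Y \, | \, X = x}$-integral of a nonnegative measurable function and hence measurable in $x$. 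The only other care needed is consistent bookkeeping of the $+\infty$ conventions so that the displayed identity is literally an equality in $[0,+\infty]$.
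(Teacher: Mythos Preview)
Your proof is correct and follows exactly the approach the paper indicates: the paper's own proof merely says the identity ``follows from the decomposition \eqref{eq:condProba}'' and defers the measurability details to \cite[Appendix section 1.7]{catoni2}, which is precisely what you have spelled out. One tiny slip: in the absolutely continuous case you should say $Q_{Y\,|\,X=x} \ll \B{P}_{Y\,|\,X=x}$ for $Q_X$-almost every $x$ (equivalently, for $\B{P}_X$-a.e.\ $x$ where $\ud Q_X/\ud \B{P}_X>0$), not for $\B{P}_X$-almost every $x$; this does not affect the argument since you only integrate against $Q_X$ afterward.
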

\noindent is valid.
\begin{proof}
It follows from the decomposition \eqref{eq:condProba}. 
A precise statement and a rigorous proof dealing with measurability issues
can be found in
\cite[Appendix section 1.7 page 50]{catoni2}.
\end{proof}

\section{Introduction}

This paper is about the most widely used 
loss function for vector quantization, 
the $k$-means criterion.
We will be interested in the statistical setting 
where the problem is to minimize the criterion
for a random vector whose distribution is unknown
but can be estimated through an i.i.d. random 
sample. Our main contribution will be 
to prove a new dimension free non-asymptotic
generalization bound with a better $k/n$
dependence, where $k$ is the number of centers
used for vector quantization and $n$ is 
the size of the statistical sample.
We will also give an interpretation 
of the $k$-means criterion in terms of 
the Kullback divergence and use it
to embed it in 
a broader family of criteria with interesting 
properties. This will provide a specific 
algorithm for the quantization of conditional 
probability distributions ranging in an 
exponential family that can be used
in particular to analyse bag of words models. 
This generalization will also provide a 
new robust criterion for the quantization
of unbounded random vectors.

Our general setting is the following. Given a random 
variable $X \in H$ ranging in a separable Hilbert 
space $H$, we are interested in minimizing the risk 
function
\[
\C{R} \bigl( c_1, \dots, c_k \bigr) =  
\B{P}_X \Bigl( \min_{j \in \dlb 1, k \drb} \lVert X - c_j \rVert^2 
\Bigr), \quad (c_1, \dots, c_k) \in H^k.
\] 
We will assume that the statistician does not know the 
distribution $\B{P}_X$, but has access instead to 
a sample $(X_1, \dots, X_n)$ made of $n$ independent
copies of $X$. If $\ds \oB{P}_X = \frac{1}{n} \sum_{i=1}^n \delta_{X_i}$
is the empirical measure of the sample, 
the empirical risk, or empirical $k$-means criterion, is defined as
\[
\oC{R}(c) =  
\oB{P}_X \Bigl( \min_{j \in \dlb 1, k \drb} \lVert X - c_j \rVert^2 \Bigr) 
= \frac{1}{n} \sum_{i=1}^n \Bigl( \min_{j \in \dlb 1, k \drb} \lVert X_i - c_j \rVert^2 \Bigr), 
\quad c \in H^k.
\]
We will first consider the bounded case.
Given a ball $\C{B} = \bigl\{ x \in H \, : \, \lVert x \rVert \leq B \bigr\}$,
we will assume that $\B{P} \bigl( X \in \C{B} \bigr) = 1$. We will
study the upper deviations of the random variable 
\[
\sup_{c \in \C{B}^k} \Bigl[ \C{R}(c) - \oC{R}(c) \Bigr].
\]
This will provide an observable upper bound for the risk that is 
uniform with respect to the choice of centers $c \in \C{B}^k$. 
Being uniform with respect to $c$ covers
the case where the centers have been computed from 
the observed sample through some algorithm.
In order to study the excess risk of estimators, 
we will also study the upper deviations of 
\[
\sup_{c \in \C{B}^k} \Bigl[ \C{R}(c) - \C{R}(c_*) - \oC{R}(c) 
+ \oC{R}(c_*) \Bigr].
\]
This random variable compares uniformly with respect 
to $c$ the excess risk of $c$ with respect to a 
non random reference $c_*$ and the corresponding 
empirical excess risk.
In particular, the non random reference $c_*$
can be chosen to be a minimizer, or more generally 
an $\epsilon$-minimizer, of the risk $\C{R}$.

Indeed, we will consider $\wh{c} \in \C{B}^k$,
depending on the sample, such that
\[
\oC{R}(\wh{c}\,) \leq \inf_{c \in \C{B}^k} \oC{R}(c) + \epsilon,
\]
and provide a bound for the excess risk $\C{R} (
\wh{c}\, ) - \inf_{c \in \C{B}^k} \C{R}(c)$. 
To complement deviation bounds, we will also provide 
corresponding bounds in expectation.

Regarding the sample size $n$,
we obtain a speed of order $ \C{O} \bigl( 1 / \sqrt{n} \bigr)$ as in
\cite{biau}, \cite{Fischer} and \cite{brecheteau}.
However, we get a better dependence in $k$, 
with a rate of convergence of $\C{O} \bigl( \sqrt{k/n} \bigr)$ up to log factors.
This is essentially optimal up to log
factors, at least in infinite dimension, 
since minimax lower bounds for the
excess $k$-means risk are of order $\C{O} \bigl( 
\sqrt{k^{1 - 4/d}/n} \bigr)$ in dimension $d$, see \cite{bartlett1998minimax} and \cite{antos2005improved}.
We should mention that
the speed $\sqrt{k/n}$ has already been established 
in \cite{fefferman2016testing} (see Lemma 6), 
\cite{foster2019} and \cite{Nikita}, but with worse
log factors and less explicit constants. 

These bounds will be obtained using PAC-Bayesian inequalities
combined with a kernel trick
and a new kind of PAC-Bayesian chaining method that we developed.
In particular, borrowing ideas
from the construction of the isonormal Gaussian process
\cite[section 3.5]{Massart2007ConcentrationIA},
we will use the distribution of
an infinite sequence of shifted Gaussian random variables
both for the prior and the posterior parameter distribution.
We will also use some arguments from the proofs of
\cite{CatGiu} and \cite{CatOlGiu},
concerning the estimation of the mean of a random vector.
Furthermore,
we take inspiration from the classical chaining procedure for bounding the expected suprema of
sub-Gaussian processes (see section 13.1 in \cite{boucheron}). We create a PAC-Bayesian version of chaining in which
the concept of $\delta$-net and $\delta$-covering is replaced by the use of a sequence of
Gaussian perturbations parametrized by a variance ranging on a
logarithmic grid.
We combine this PAC-Bayesian chaining with the use of the influence function $\psi$ described in
\cite{catoni2012} to decompose the excess risk into
a sub-Gaussian part and an other part representing extreme values.
It is worth mentioning that
we will work with weak hypotheses and will in particular
not consider the kind of margin assumptions that are
necessary to get bounds decreasing faster than $\sqrt{1/n}$
for a given value of $k$, as in  \cite{levrard2013fast}, \cite{levrardThesis},\cite{levrard2015}, and \cite{levrard2018}.

\section{Extensions of the $k$-means criterion}

Before proving generalization bounds, let us 
embed the $k$-means criterion in a broader family of 
risk functions.

We will do this while considering the square of the Euclidean
distance, or more generally in possibly infinite dimension 
the square of the Hilbert norm,
as the Kullback divergence between two Gaussian 
measures. In this interpretation, vector quantization 
according to the $k$-means criterion will appear 
as a special case of conditional probability measure
quantization according to an entropy criterion.

To describe things at a more technical level, we need
first to define the classification function underlying
vector quantization.

To a set of centers $c \in H^k$ indeed corresponds a classification
function $\ell : H \rightarrow \dlb 1, k \drb$ into Vorono\"i cells
defined as
\[
\ell(x) = \arg \min_{j \in \dlb 1, k \drb} \lVert x - c_j \rVert.
\] 
This definition may not be unique if the minimum is reached more 
than once, in which case, we make an arbitrary choice,
as for instance
\[
\ell(x) = \min \Bigl\{ 
\arg \min_{j \in \dlb 1, k \drb} \lVert x - c_j \rVert \Bigr\}.
\]
The corresponding vector quantization function is 
\[
f(x) = c_{\ell(x)}, \quad x \in H.
\] 

To study the quality of the quantization of a random 
variable $X \in H$ in terms of conditional probability distributions, 
we introduce another random 
variable $Y \in \B{R}^{\B{N}}$ and consider on 
some probability space $\Omega$ a realization $(X, Y) : \Omega \rightarrow 
H \times \B{R}^{\B{N}}$ of the couple of random variables $(X, Y)$.
We can for instance take $\Omega = H \times \B{R}^{\B{N}}$ 
and let $(X,Y)$ be the identity. Introduce now a probability measure
$\B{P} \in \C{M}_+^1(\Omega)$ such that $\B{P}_X$ is the 
law of $X$ and such that $\B{P}_{Y \, | \, X}$ is the law 
of the independent sequence $\langle X, e_i \rangle 
+ \sigma \epsilon_i$, $i \in \B{N}$,
where $\bigl( \epsilon_i, i \in \B{N} \bigr)$ 
is an i.i.d. sequence of standard normal
random variables, where $\bigl( e_i, i \in \B{N} \bigr)$
is a basis of $H$ and where $\sigma > 0$ is a standard deviation 
parameter. In other words, let 
\[
\B{P}_{Y \, | \, X} = \bigotimes_{i \in \B{N}} \C{N} \bigl( \langle X, e_i 
\rangle, \sigma^2 \bigr).
\] 
Let us also introduce a probability measure $Q^{(c)} \in \C{M}_+^1(\Omega)$
such that $Q^{(c)}_X = \B{P}_X$ and 
\[
Q^{(c)}_{Y \, | \, X} = \bigotimes_{i \in \B{N}} \C{N} \bigl( 
\langle c_{\ell(X)}, e_i \rangle, \sigma^2 \bigr),
\]
where $\ell$ is defined from $c$ as explained above.
In other words, $Q^{(c)}_{Y \, | \, X}$
is the distribution of the random sequence
$\langle c_{\ell(X)}, e_i \rangle + \sigma \epsilon_i, i \in \B{N}$.
We see that $Q^{(c)}_{Y \, | \, X}$ is a quantization of $\B{P}_{Y \, | \, X}$
that takes $k$ values, in the same way as $f(X) = c_{\ell(X)}$ 
is a quantization of $X$ itself. 
\begin{prop}
\label{prop:02}
The $k$-means criterion
$\C{R}$ can be expressed as
\begin{equation}
\label{eq:02}
\C{R} (c) = 2 \sigma^2 \B{P}_X \bigl[ \C{K} \bigl( Q^{(c)}_{Y \, | \, X}, 
\B{P}_{Y \, | \, X} \bigr) \bigr] = 2 \sigma^2 \C{K} \bigl( 
Q^{(c)}_{X, \, Y}, \B{P}_{X, \, Y} \bigr).
\end{equation}
\end{prop}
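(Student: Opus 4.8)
The plan is to reduce everything to the elementary identity for the Kullback divergence between two univariate Gaussian laws sharing the same variance, namely $\C{K}\bigl(\C{N}(a,\sigma^2),\C{N}(b,\sigma^2)\bigr) = (a-b)^2/(2\sigma^2)$, and then to propagate it through the infinite product structure. First I would fix $x \in H$ (later to be replaced by the random variable $X$), write $a_i = \langle c_{\ell(x)}, e_i \rangle$ and $b_i = \langle x, e_i \rangle$, and observe that by Parseval's identity $\sum_{i \in \B{N}} (a_i - b_i)^2 = \lVert c_{\ell(x)} - x \rVert^2 < \infty$, since both $x$ and $c_{\ell(x)}$ lie in $H$. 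Hence the measures $Q^{(c)}_{Y \, | \, X = x} = \bigotimes_i \C{N}(a_i, \sigma^2)$ and $\B{P}_{Y \, | \, X = x} = \bigotimes_i \C{N}(b_i, \sigma^2)$ are mutually absolutely continuous (this is the only point requiring care; see below), and the additivity of the Kullback divergence over products gives
\[
\C{K}\bigl( Q^{(c)}_{Y \, | \, X = x}, \B{P}_{Y \, | \, X = x} \bigr) = \sum_{i \in \B{N}} \frac{(a_i - b_i)^2}{2\sigma^2} = \frac{\lVert x - c_{\ell(x)} \rVert^2}{2 \sigma^2}.
\]

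Next I would use the definition of the classification function $\ell$, which by construction satisfies $\lVert x - c_{\ell(x)} \rVert^2 = \min_{j \in \dlb 1, k \drb} \lVert x - c_j \rVert^2$. Substituting $x = X$, integrating with respect to $\B{P}_X$, and multiplying by $2\sigma^2$ yields
\[
2 \sigma^2 \, \B{P}_X \bigl[ \C{K} \bigl( Q^{(c)}_{Y \, | \, X}, \B{P}_{Y \, | \, X} \bigr) \bigr] = \B{P}_X \Bigl( \min_{j \in \dlb 1, k \drb} \lVert X - c_j \rVert^2 \Bigr) = \C{R}(c),
\]
which is the first claimed equality. For the second equality I would invoke the chain rule for the Kullback divergence, Lemma~\ref{lem:1.2}, applied to the couple $(X, Y)$: since $Q^{(c)}_X = \B{P}_X$ by construction, the term $\C{K}\bigl( Q^{(c)}_X, \B{P}_X \bigr)$ vanishes, and the remaining term is $Q^{(c)}_X \bigl[ \C{K} \bigl( Q^{(c)}_{Y \, | \, X}, \B{P}_{Y \, | \, X} \bigr) \bigr] = \B{P}_X \bigl[ \C{K} \bigl( Q^{(c)}_{Y \, | \, X}, \B{P}_{Y \, | \, X} \bigr) \bigr]$. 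Combining this with the previous display finishes the proof.

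The only genuinely delicate point is the absolute continuity $Q^{(c)}_{Y \, | \, X = x} \ll \B{P}_{Y \, | \, X = x}$ together with the additivity of $\C{K}$ over the countable product, since for infinite products of Gaussians both can fail (Feldman--H\'{a}jek dichotomy). Here it is harmless: the bound $\sum_i (a_i - b_i)^2 = \lVert x - c_{\ell(x)} \rVert^2 < \infty$ places us in the equivalent regime, and one obtains the identity by computing the Radon--Nikodym derivative of the first $m$ coordinates, taking expectations to get $\sum_{i \leq m} (a_i - b_i)^2 / (2\sigma^2)$, and passing to the limit $m \to \infty$ by monotone convergence, the partial sums being nondecreasing. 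Measurability of $x \mapsto \C{K}\bigl( Q^{(c)}_{Y \, | \, X = x}, \B{P}_{Y \, | \, X = x} \bigr)$, needed to make sense of the $\B{P}_X$-integral, follows from the same finite-dimensional approximation (and from the measurability of $x \mapsto c_{\ell(x)}$ entailed by the tie-breaking rule in the definition of $\ell$). I expect this to be the main obstacle, albeit a routine one; everything else is a direct computation.
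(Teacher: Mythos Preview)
Your proposal is correct and follows essentially the same route as the paper: compute $\C{K}$ coordinatewise on the Gaussian product to obtain $\lVert X - c_{\ell(X)}\rVert^2/(2\sigma^2)$, identify this with the minimum over $j$, and then invoke Lemma~\ref{lem:1.2} together with $Q^{(c)}_X = \B{P}_X$ for the second equality. If anything, you are more careful than the paper in flagging the Feldman--H\'ajek and measurability issues for the infinite product, which the paper handles tacitly.
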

\begin{proof}
The first equality comes from the fact that
\begin{multline*}
\C{K} \bigl( Q^{(c)}_{Y \, | \, X}, \B{P}_{Y \, | \, X} \bigr) 
= \sum_{i \in \B{N}} \C{K} \bigl( \C{N} ( \langle c_{\ell(X)}, e_i \rangle, 
\sigma^2 ), \C{N} ( \langle X, e_i \rangle, \sigma^2) \bigr)
\\ = \sum_{i \in \B{N}} \frac{1}{2 \sigma^2} \bigl( \langle c_{\ell(X)}, 
e_i \rangle - \langle X, e_i \rangle \bigr)^2
= \frac{1}{2 \sigma^2} \lVert X - c_{\ell(X)}\rVert^2
= \frac{1}{2 \sigma^2} \min_{j \in \dlb 1, k \drb} \lVert X - c_j \rVert^2.
\end{multline*}
The second equality is a consequence of the decomposition stated 
in Lemma \vref{lem:1.2}, that says that
\[
\C{K} \bigl( Q_{X, \, Y}^{(c)}, \B{P}_{X, \, Y} \bigr) = 
\C{K} \bigl( Q^{(c)}_X, \B{P}_X \bigr) + Q^{(c)}_X \bigl[ 
\C{K} \bigl( Q^{(c)}_{Y \, | \, X}, \B{P}_{Y \, | \, X} \bigr) 
\bigr]
\] 
and of the fact that $Q^{(c)}_X = \B{P}_X$ by definition of $Q^{(c)}$.
\end{proof}

The two equalities of Proposition \ref{prop:02} 
will be interesting to extend the $k$-means 
criterion. Let us draw the consequences of the first one first.
Introduce
\[
\mu^{(c)}_j = \bigotimes_{i \in \B{N}} \C{N} \bigl( \langle c_j, e_i \rangle, 
\sigma^2 \bigr) \in \C{M}_+^1(\B{R}^{\B{N}}), \qquad j \in \dlb 1, k \drb. 
\]
The first part of equation \eqref{eq:02} can be written as
\[
\C{R}(c) = 2 \sigma^2 \B{P}_{X} \bigl[ \C{K} \bigl( 
\mu_{\ell(X)}^{(c)}, \B{P}_{Y \, | \, X} \bigr) \bigr]
= 2 \sigma^2 \B{P}_{X} \bigl[ \min_{j \in \dlb 1, k \drb} 
\C{K} \bigl( \mu^{(c)}_j, \B{P}_{Y \, | \, X} \bigr) \bigr],
\] 
since 
\[
2 \sigma^2 \, \C{K} \bigl( \mu^{(c)}_j, \B{P}_{Y \, | \, X} \bigr) 
= \lVert X - c_j \rVert^2.
\]
Note that we could also have used 
$2 \sigma^2 \, \C{K} \bigl( \B{P}_{Y \, | \, X} , \mu^{(c)}_j \bigr) 
= \lVert X - c_j \rVert^2$, since the Kullback divergence between
Gaussian measures is symmetric. This would have lead to another 
interpretation of the $k$-means criterion in a space of conditional
probability measures. The choice we made is quite unusual, but is 
justified by the following property.
\begin{prop}
\label{prop:03}
The minimization of $\C{R}(c)$ seen as a function of $\mu^{(c)}$
can be extended to the larger set $\C{M}_+^1 \bigl( \B{R}^{\B{N}} \bigr)^k$.
In other words, if we put
\[
\wt{\C{R}} ( \mu ) = 
2 \sigma^2 
\B{P}_X \bigl[ \min_{j \in \dlb 1, k \drb} 
\C{K} \bigl( \mu_{j} , \B{P}_{Y \, | \, X} \bigr) \bigr], \qquad 
\mu \in  \C{M}_+^1( \B{R}^{\B{N}})^k, 
\]
we can see that $\C{R}(c) = \wt{\C{R}}(\mu^{(c)})$
and the minimum of $\wt{\C{R}}$ coincides with 
the minimum of $\C{R}$ in the sense that 
\[ 
\inf_{c \in H^k} \C{R}(c) 
= \inf_{c \in H^k} \wt{\C{R}}(\mu^{(c)}) 
= \inf_{ 
\mu \in  \C{M}_+^1( \B{R}^{\B{N}})^k 
} \wt{\C{R}}(\mu). 
\] 
\end{prop}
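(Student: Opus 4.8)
The plan is the following. The equality $\inf_{c \in H^k} \C{R}(c) = \inf_{c \in H^k} \wt{\C{R}}(\mu^{(c)})$ follows at once from the pointwise identity $\C{R}(c) = \wt{\C{R}}(\mu^{(c)})$ obtained just above, and since $\{ \mu^{(c)} : c \in H^k \} \subset \C{M}_+^1(\B{R}^{\B{N}})^k$ we get for free that $\inf_{c \in H^k} \wt{\C{R}}(\mu^{(c)}) \geq \inf_{\mu} \wt{\C{R}}(\mu)$. So the whole content is the reverse inequality, which I would restate as: for every $\mu = (\mu_1, \dots, \mu_k) \in \C{M}_+^1(\B{R}^{\B{N}})^k$,
\[
\inf_{c \in H^k} \C{R}(c) \leq \wt{\C{R}}(\mu).
\]
One may clearly assume $\wt{\C{R}}(\mu) < + \infty$, since otherwise there is nothing to prove.

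The core ingredient is a Pythagorean-type inequality for the exponential family of translated product Gaussians. Identifying $H$ with $\ell^2(\B{N})$ through the basis $(e_i)$ and writing $\nu_m = \bigotimes_{i \in \B{N}} \C{N}(m_i, \sigma^2) \in \C{M}_+^1(\B{R}^{\B{N}})$ for $m \in \ell^2(\B{N})$, I would establish: if $\rho \in \C{M}_+^1(\B{R}^{\B{N}})$ satisfies $\C{K}(\rho, \nu_m) < + \infty$ for some $m \in \ell^2(\B{N})$, then the coordinatewise mean $m_\rho = \bigl( \int y_i \, \ud \rho(y) \bigr)_{i \in \B{N}}$ is well defined, belongs to $\ell^2(\B{N})$, and for every $m' \in \ell^2(\B{N})$,
\[
\C{K}(\rho, \nu_{m'}) \geq \frac{1}{2 \sigma^2} \lVert m_\rho - m' \rVert^2 = \C{K}(\nu_{m_\rho}, \nu_{m'}).
\]
In fact the equality $\C{K}(\rho, \nu_{m'}) = \C{K}(\rho, \nu_{m_\rho}) + \C{K}(\nu_{m_\rho}, \nu_{m'})$ holds, the usual Bregman projection identity for exponential families, but only the displayed inequality is needed. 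I would prove it by reduction to finite dimension: passing to the marginals $\rho^{(N)}, \nu_m^{(N)}$ on the first $N$ coordinates preserves finiteness of the Kullback divergence, $\C{K}(\rho^{(N)}, \nu_m^{(N)})$ increases to $\C{K}(\rho, \nu_m)$ as $N \to \infty$ (standard for a product reference measure), and in finite dimension the identity is an explicit computation of $\log(\ud \nu_{m_\rho^{(N)}} / \ud \nu_{m'}^{(N)})$ followed by integration against $\rho^{(N)}$, the needed finiteness of $\int y_i \, \ud \rho$ coming from the fact that a finite-dimensional density of finite relative entropy against a Gaussian has finite second moments (e.g. via the inequality $\rho(h) \leq \log \nu(\exp(h)) + \C{K}(\rho, \nu)$ applied to $h(y) = \lambda y_i$). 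Letting $N \to \infty$ then gives both $m_\rho \in \ell^2(\B{N})$ and the inequality.

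Granting this, fix $\mu$ with $\wt{\C{R}}(\mu) < + \infty$ and let $\ell_\mu : H \to \dlb 1, k \drb$ be a measurable selection of $\arg\min_j \C{K}(\mu_j, \B{P}_{Y \, | \, X})$, built with the same tie-breaking convention as the map $\ell$ attached to a family of centers. Since $\B{P}_X \bigl[ \min_j \C{K}(\mu_j, \B{P}_{Y \, | \, X}) \bigr] = \wt{\C{R}}(\mu) / (2 \sigma^2) < + \infty$, for $\B{P}_X$-almost every $X$ we have $\C{K}(\mu_{\ell_\mu(X)}, \B{P}_{Y \, | \, X}) < + \infty$; recalling that $\B{P}_{Y \, | \, X} = \nu_{m_X}$ with $m_X = (\langle X, e_i \rangle)_{i \in \B{N}} \in \ell^2(\B{N})$, the lemma shows that $\mu_{\ell_\mu(X)}$ has an $\ell^2(\B{N})$ mean. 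I would then define $c_j \in H$ to be the element with $\langle c_j, e_i \rangle = \int y_i \, \ud \mu_j(y)$ whenever $\mu_j$ admits an $\ell^2(\B{N})$ mean, and $c_j = 0$ otherwise, so that $c_{\ell_\mu(X)}$ is of the first kind for $\B{P}_X$-a.e.\ $X$. For such an $X$, writing $j^\ast = \ell_\mu(X)$ and using successively that the minimum is below one of its terms, the identity $2 \sigma^2 \C{K}(\mu^{(c)}_j, \B{P}_{Y \, | \, X}) = \lVert X - c_j \rVert^2$, the lemma applied to $\rho = \mu_{j^\ast}$ with $m' = m_X$ (and $c_{j^\ast}$ being by construction the mean of $\mu_{j^\ast}$), and the definition of $j^\ast$,
\begin{align*}
\min_{j \in \dlb 1, k \drb} 2 \sigma^2 \, \C{K} \bigl( \mu^{(c)}_j, \B{P}_{Y \, | \, X} \bigr)
&\leq 2 \sigma^2 \, \C{K} \bigl( \mu^{(c)}_{j^\ast}, \B{P}_{Y \, | \, X} \bigr)
= \lVert X - c_{j^\ast} \rVert^2 \\
&\leq 2 \sigma^2 \, \C{K} \bigl( \mu_{j^\ast}, \B{P}_{Y \, | \, X} \bigr)
= \min_{j \in \dlb 1, k \drb} 2 \sigma^2 \, \C{K} \bigl( \mu_j, \B{P}_{Y \, | \, X} \bigr).
\end{align*}
Integrating this almost sure inequality against $\B{P}_X$ and using $\C{R}(c) = \wt{\C{R}}(\mu^{(c)}) = 2 \sigma^2 \B{P}_X \bigl[ \min_j \C{K}(\mu^{(c)}_j, \B{P}_{Y \, | \, X}) \bigr]$ yields $\C{R}(c) \leq \wt{\C{R}}(\mu)$, hence $\inf_{c' \in H^k} \C{R}(c') \leq \wt{\C{R}}(\mu)$, which closes the argument.

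The only genuinely delicate step is the lemma: handling the infinite-dimensional Kullback divergence and extracting an $\ell^2$ mean from the finite-entropy constraint; everything else is bookkeeping about which $\mu_j$ are active and the almost sure identifications. One could also bypass the explicit computation by appealing directly to the fact that $\{ \nu_m \}$ is an exponential family with sufficient statistic $y$, for which the Kullback divergence is a Bregman divergence and the moment-matching parameter $m_\rho$ is its Bregman projection, so that the projection identity — and a fortiori the inequality used above — holds automatically.
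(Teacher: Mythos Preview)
Your argument is correct, but it is genuinely different from the paper's. The paper fixes the classification $\ell$ induced by $\mu$ and then takes $c_j = \B{P}_{X \,|\, \ell(X)=j}(X)$, the Lloyd centers of the Vorono\"i cells. It then exploits the exponential-family structure to recognise $\mu_j^{(c)}$ as the geometric mean of the $\B{P}_{Y\,|\,X}$ over the $j$-th cell, obtaining a Pythagorean identity at the level of $\B{P}_X$-expectations:
\[
\B{P}_X \bigl[ \C{K}(\mu_{\ell(X)}, \B{P}_{Y\,|\,X}) \bigr]
= \B{P}_X \bigl[ \C{K}(\mu_{\ell(X)}, \mu^{(c)}_{\ell(X)}) \bigr]
+ \B{P}_X \bigl[ \C{K}(\mu^{(c)}_{\ell(X)}, \B{P}_{Y\,|\,X}) \bigr].
\]
You instead project each $\mu_j$ individually onto the Gaussian family by moment matching, setting $c_j$ to be the mean of $\mu_j$, and use the Bregman projection inequality $\C{K}(\mu_j, \nu_{m'}) \geq \C{K}(\nu_{c_j}, \nu_{m'})$ pointwise in $X$. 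Your route is more direct for this particular statement and yields an almost sure inequality before integration; the paper's route, on the other hand, is the one that generalises verbatim to the information $k$-means setting of section~\ref{section:7}, where the optimal centers for a fixed classification are again geometric means, and it foreshadows the version of Lloyd's algorithm developed there.
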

\begin{proof}
Given $\mu \in \C{M}_+^1( \B{R}^{\B{N}} )^k$,
we have to find $c \in H^k$ such that
$\wt{\C{R}}(\mu) \geq \C{R}(c)$.
This will prove that
\[ 
\inf_{c \in H^k} \C{R}(c) 
= \inf_{c \in H^k} \wt{\C{R}}(\mu^{(c)}) 
\leq \inf_{ 
\mu \in  \C{M}_+^1( \B{R}^{\B{N}})^k 
} \wt{\C{R}}(\mu), 
\] 
and since the reverse inequality is obvious
from the fact that we take the infimum on 
a larger set, this will prove the 
proposition.

Consider then 
\[
\ell(X) = \min \Bigl\{ \arg \min_{j \in \dlb 1, k \drb} 
\C{K} \bigl( \mu_j, \B{P}_{Y \, | \, X} \bigr) \Bigr\}
\]
and
\[
c_j = \B{P}_{X \, | \, \ell(X) = j} ( X ), \qquad j \in \dlb 1, k \drb.
\]
It is easy to check that the centers $c_j$ are such that
\[
\frac{\ud \mu^{(c)}_j}{\ud \mu^{(0)}_j} = Z_j^{-1} \exp \biggl\{
\B{P}_{X \, | \, 
\ell(X) = j} \biggl[ 
\log \biggl( \frac{\ud \B{P}_{Y \, | \, X}}{\ud \mu_j^{(0)}}
\biggr) \biggr] \biggr\},
\]
where $Z_j$ is a normalizing constant and $\mu^{(0)}$ is $\mu^{(c)}$ 
with $c = 0 \in H^k$, the centered Gaussian measure. Indeed, Gaussian measures
with the same covariance form an exponential family indexed by 
their means. Taking the arithmetic mean of the parameter in 
an exponential family results in taking the geometric mean
of the probability measures.
Thus, $\mu_j^{(c)}$ is the geometric mean of 
$\B{P}_{Y \, | \, X}$ with weights $\B{P}_{X \, | \, \ell(X) = j}$.
As a consequence, for any $j \in \dlb 1, k \drb$,  
\begin{multline*}
\B{P}_{X \, | \, \ell(X) = j} \bigl[ \C{K} \bigl( 
\mu_{\ell(X)}, \B{P}_{Y \, | \, X} \bigr) \bigr] \\ =
\B{P}_{X \, | \, \ell(X) = j} \biggl\{ 
\mu_j \biggl[ \log \biggl( \frac{ \ud \mu_j }{ 
\ud \B{P}_{Y \, | \, X}} \biggr) \biggr] \biggr\} \\
= 
\mu_j \biggl[ \log \biggl( \frac{\ud \mu_j}{\ud \mu_j^{(0)}} 
\biggr) \biggr] - \mu_j \biggl\{ \B{P}_{X \, | \, \ell(X) = j} 
 \biggl[ \log \biggl( \frac{\ud \B{P}_{Y \, | \, X}}{\ud \mu^{(0)}_j} 
\biggr) \biggr] \biggr\} \\ = 
\mu_j \biggl[ \log \biggl( \frac{\ud \mu_j}{\ud \mu_j^{(0)}} \biggl) 
\biggr] - \mu_j \biggl[ \log \biggl( \frac{\ud \mu_j^{(c)}}{ \ud
\mu_j^{(0)}} \biggr) \biggr] - \log(Z_j)
= \mu_j \biggl[ \log \biggl( \frac{\ud \mu_j}{\ud \mu_j^{(c)}} 
\biggr) \biggr] - \log(Z_j).  
\end{multline*}
Moreover, considering the case when $\mu = \mu^{(c)}$, 
we see that
\[ 
\B{P}_{X \, | \, \ell(X) = j} \bigl[ \C{K} \bigl( 
\mu_{\ell(X)}^{(c)}, \B{P}_{Y \, | \, X} \bigr) \bigr] = - \log(Z_j).
\]
Therefore
\begin{multline*}
\B{P}_X \bigl[ \min_{j \in \dlb 1, k \drb} \C{K} \bigl( 
\mu_j, \B{P}_{Y \, | \, X} \bigr) \bigr] = 
\B{P}_X \bigl[ \C{K} \bigl( 
\mu_{\ell(X)}, \B{P}_{Y \, | \, X} \bigr) \bigr]
= \B{P}_{\ell(X)} \B{P}_{X \, | \, \ell(X)} \bigl[ \C{K} \bigl( 
\mu_{\ell(X)}, \B{P}_{Y \, | \, X} \bigr) \bigr]
\\ = \B{P}_X \bigl[ \C{K} \bigl( 
\mu_{\ell(X)}, \mu^{(c)}_{\ell(X)} \bigr) \bigr] + 
\B{P}_X \bigl[ \C{K} \bigl( \mu^{(c)}_{\ell(X)}, 
\B{P}_{Y \, | \, X} \bigr) \bigr]
\\ \geq 
\B{P}_X \bigl[ \C{K} \bigl( \mu^{(c)}_{\ell(X)}, 
\B{P}_{Y \, | \, X} \bigr) \bigr] \geq 
\B{P}_{X} \bigl[ \min_{j \in \dlb 1, k \drb} 
\C{K} \bigl( \mu^{(c)}_j, \B{P}_{Y \, | \, X} \bigr) \bigr],
\end{multline*}
showing that $\wt{\C{R}}(\mu) \geq \wt{\C{R}} \bigl( \mu^{(c)} \bigr)$.
\end{proof}

So Proposition \ref{prop:03} shows that the $k$-means
algorithm also solves a quantization problem for 
Gaussian conditional probability measures $\B{P}_{Y \,  | \, X}$.
This is an invitation to study more generally 
the quantization problem for conditional probability measures,
using what we will call 
the information $k$-means
criterion $\wt{\C{R}}(\mu)$. 
This is what will be done in section \vref{section:7}.

Let us now come back to the second equality of equation \myeq{eq:02}.
It relates the minimization of the $k$-means criterion 
with the estimation of the joint probability measure $\B{P}_{X, \, Y}$.
Instead of considering the single distribution $Q^{(c)}_{X, \, Y}$
we can optimize the value of $Q^{(c)}_X$, considering the model
\[
\C{Q}(c) = \bigl\{ Q \in \C{M}_+^1(\Omega) \, : \, 
Q_{Y \, | \, X} = \mu^{(c)}_{\ell(X)} \bigr\} \ni
Q^{(c)}_{X, \, Y}.
\]
In order to get a better approximation of $\B{P}_{X, \, Y}$, 
it is natural to consider instead of $\C{R}(c)$
the criterion 
\[
\C{C}_2(c) = 2 \sigma^2 \inf_{Q \in \C{Q}(c)} \C{K} \bigl( 
Q_{X, \, Y}, \B{P}_{X, \, Y} \bigr) \leq 
\C{R}(c) = 2 \sigma^2 \C{K} \bigl( Q^{(c)}_{X, \, Y}, \B{P}_{X, \, Y} 
\bigr).
\] 
It turns out that this infimum can be computed.
\begin{prop}
Consider the classification function
\begin{equation}
\label{eq:classif}
\ell_c(x) = \min \Bigl\{ \arg \min_{j \in \dlb 1, \, k \drb} 
\lVert X - c_j \rVert \Bigr\}.
\end{equation}
The above criterion is equal to
\begin{align*}
\C{C}_2(c) = 2 \sigma^2 \inf_{Q \in \C{Q}(c)} \C{K} \bigl( 
Q_{X, \, Y}, \B{P}_{X, \, Y} \bigr) & = 
- 2 \sigma^2 \log \B{P}_X \Bigl\{ \exp \bigl[ 
- \C{K} \bigl( \mu_{\ell_c(X)}^{(c)}, \B{P}_{Y \, | \, X} 
\bigr) \bigr] \Bigr\} \\ 
& = - 2 \sigma^2 \log \B{P}_X \Bigl\{ 
\exp \Bigl[ - \frac{1}{2 \sigma^2} 
\bigl\lVert X - c_{\ell_c(X)} \bigr\rVert^2 \Bigr] \Bigr\} \\
& = - 2 \sigma^2 \log \B{P}_X \Bigl\{ 
\exp \Bigl[ - \frac{1}{2 \sigma^2} 
\min_{j \in \dlb 1, k \drb} \bigl\lVert X - c_{j} \bigr\rVert^2 \Bigr] \Bigr\}.
\end{align*}
\end{prop}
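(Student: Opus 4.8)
The plan is to use the chain rule for the Kullback divergence (Lemma~\ref{lem:1.2}) to split the joint divergence along the $X$-marginal, to observe that on $\C{Q}(c)$ the conditional part becomes a fixed function of $X$, and then to recognize the remaining infimum over $Q_X$ as an instance of the classical variational formula for the exponential moment (the Gibbs / Donsker--Varadhan identity). Concretely, write $g(X) = \C{K}\bigl(\mu^{(c)}_{\ell_c(X)}, \B{P}_{Y\,|\,X}\bigr)$; by the computation carried out in the proof of Proposition~\ref{prop:02} and the identity $2\sigma^2\,\C{K}\bigl(\mu^{(c)}_j, \B{P}_{Y\,|\,X}\bigr) = \lVert X - c_j\rVert^2$ noted just after it,
\[
g(X) = \frac{1}{2\sigma^2}\bigl\lVert X - c_{\ell_c(X)}\bigr\rVert^2 = \frac{1}{2\sigma^2}\min_{j \in \dlb 1, k\drb}\lVert X - c_j\rVert^2,
\]
a nonnegative, everywhere finite measurable function of $X$ alone. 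For any $Q \in \C{Q}(c)$ one has $Q_{Y\,|\,X} = \mu^{(c)}_{\ell_c(X)}$ by definition, so Lemma~\ref{lem:1.2} gives $\C{K}\bigl(Q_{X,Y}, \B{P}_{X,Y}\bigr) = \C{K}\bigl(Q_X, \B{P}_X\bigr) + Q_X(g)$; thus this divergence depends on $Q \in \C{Q}(c)$ only through the marginal $Q_X$, and conversely every $\nu \in \C{M}_+^1(H)$ is the $X$-marginal of some element of $\C{Q}(c)$ (the law with marginal $\nu$ and conditional $\mu^{(c)}_{\ell_c(X)}$), whence
\[
\inf_{Q \in \C{Q}(c)} \C{K}\bigl(Q_{X,Y}, \B{P}_{X,Y}\bigr) = \inf_{Q_X \in \C{M}_+^1(H)} \bigl\{ \C{K}\bigl(Q_X, \B{P}_X\bigr) + Q_X(g) \bigr\}.
\]

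Then I would invoke the Gibbs variational principle: for any nonnegative measurable $g$,
\[
\inf_{Q_X}\bigl\{\C{K}(Q_X,\B{P}_X)+Q_X(g)\bigr\} = -\log \B{P}_X\bigl[\exp(-g)\bigr],
\]
the infimum being attained at the measure with density $\exp(-g)/\B{P}_X[\exp(-g)]$ with respect to $\B{P}_X$. Since $0 \le g < \infty$ everywhere, $\B{P}_X[\exp(-g)] \in (0,1]$, so the right-hand side is finite; and since $t \mapsto t e^{-t}$ is bounded, the Gibbs measure satisfies $Q_X(g) < \infty$, so no integrability issue arises. Multiplying by $2\sigma^2$ yields
\[
\C{C}_2(c) = -2\sigma^2 \log \B{P}_X\bigl[\exp(-g(X))\bigr] = -2\sigma^2 \log \B{P}_X\Bigl\{ \exp\Bigl[ -\tfrac{1}{2\sigma^2}\min_{j\in\dlb 1,k\drb}\lVert X - c_j\rVert^2 \Bigr] \Bigr\},
\]
which is the last displayed identity; the penultimate one follows from $\min_j \lVert X - c_j\rVert^2 = \lVert X - c_{\ell_c(X)}\rVert^2$, and the first from rewriting $g(X) = \C{K}\bigl(\mu^{(c)}_{\ell_c(X)}, \B{P}_{Y\,|\,X}\bigr)$.

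I expect the only point needing care to be the decoupling in the first step, at the level of measures on $\Omega$: one must check that $\C{Q}(c)$ is faithfully parametrized by its $X$-marginal and that the chain rule of Lemma~\ref{lem:1.2} applies with the prescribed $X$-measurable conditional $\mu^{(c)}_{\ell_c(X)}$ (which uses measurability of $x \mapsto \ell_c(x)$), together with the validity of the Donsker--Varadhan identity for the possibly unbounded, but nonnegative and a.s. finite, function $g$. Both are standard, so the argument is routine once this bookkeeping is in place.
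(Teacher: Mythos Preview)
Your proposal is correct and follows essentially the same route as the paper: apply the chain rule of Lemma~\ref{lem:1.2} to reduce to $\C{K}(Q_X,\B{P}_X)+Q_X(g)$ with $g(X)=\C{K}\bigl(\mu^{(c)}_{\ell_c(X)},\B{P}_{Y\,|\,X}\bigr)$, then minimize in $Q_X$ via the Donsker--Varadhan identity (which is precisely the content of Lemma~\ref{lem:1.3} in the paper). Your write-up is somewhat more explicit than the paper's about the parametrization of $\C{Q}(c)$ by its $X$-marginal and about the integrability of $g$, but the argument is the same.
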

\begin{proof}
For any $Q \in \C{Q}(c)$, use the decomposition stated 
in Lemma \vref{lem:1.2}, to obtain that
\begin{multline}
\label{eq:robustDecomp}
\C{K} \bigl( Q_{X, \, Y}, \B{P}_{X, \, Y} \bigr)
= \C{K} \bigl( Q_X, \B{P}_X \bigr) + Q_X \bigl[ \C{K} \bigl( 
Q_{Y \, | \, X}, \B{P}_{Y \, | \, X} \bigr) \bigr]
\\ = \C{K} \bigl( Q_X, \B{P}_X \bigr) + 
Q_X \bigl[ \C{K} \bigl( \mu^{(c)}_{\ell_c(X)}, \B{P}_{Y \, | \, X} 
\bigr) \bigr]. 
\end{multline}
Minimizing this last expression with respect to $Q_X \in \C{M}_+^1(H)$
according to forthcoming Lemma \vref{lem:1.3} gives the first 
equality of the proposition, the others being obvious.
\end{proof}

The criterion $\C{C}_2(c)$ is not a risk function in the sense
that it is not the expectation of a loss function, but it 
is closely related to one. Indeed we can introduce
\begin{equation}
\label{eq:robustQuadratic}
\C{R}_2(c) = 2 \sigma^2 \Bigl[ 1 - \exp \Bigl( - \frac{1}{2 \sigma^2} 
\C{C}_2(c) \Bigr) \Bigr] \leq \C{C}_2(c) \leq \C{R}(c) 
\end{equation}
that is equal to 
\[
\C{R}_2(c) = 2 \sigma^2 \B{P}_X \Bigl[ 1 - \exp \Bigl( - \frac{1}{2 
\sigma^2} \min_{j \in \dlb 1, k \drb} \lVert X - c_j \rVert^2 \Bigr) 
\Bigr] 
\]
according to the previous proposition.
We see that the risk $\C{R}_2$ is a natural
modification of the risk $\C{R}$ when we relate
$\C{R}$ to the estimation of $\B{P}_{X, \, Y}$.
This new risk $\C{R}_2$ is smaller, meaning that
it should be easier to minimize and indeed, as 
it is the expectation of a bounded loss function,
we will get a generalization bound under weaker 
hypotheses than what we will ask for $\C{R}$.
More specifically, we will assume no more 
that the sample is bounded.

\section{Study of the robust quadratic $k$-means criterion}

We can find a local minimum of the usual quadratic $k$-means criterion
using Lloyd's algorithm that updates the centers and the classification
function alternately. In this section, we will describe a similar
algorithm for the robust criterion of equation 
\eqref{eq:robustQuadratic}. According to this equation, $\C{R}_2(c)$
is an increasing function of $\C{C}_2(c)$, so that we can as well
study the minimization of $\C{C}_2(c)$. The discussion will also 
cover the minimization of the corresponding empirical criteria,
replacing the law of $X$, $\B{P}_{X}$, by the empirical measure
$\oB{P}_X$.

According to the decomposition \eqref{eq:robustDecomp}, 
\[ 
\frac{1}{2 \sigma^2} \C{C}_2(c) = \inf_{Q_X \in \C{M}_+^1(H)} 
\C{K} \bigl( Q_X, \B{P}_X \bigr) + Q_X \Bigl( \frac{1}{2 \sigma^2} 
\lVert X - c_{\ell_c(X)} \rVert^2 \Bigr). 
\] 
Moreover the infimum in $Q_X$ is reached at $Q^*_X \ll \B{P}_X$ 
defined by its density
\[
\frac{ \ud Q^*_{X}}{\ud \B{P}_X} = 
Z^{-1} \exp \Bigl( - \frac{1}{2 \sigma^2} \lVert X - c_{\ell_c(X)} 
\rVert^2 \Bigr).
\]
This proves
\begin{prop}[Lloyd's algorithm for the robust $k$-means criterion]
For any $c \in H^k$, consider the updated centers $c' \in H^k$
defined as 
\[ 
c'_j = Q^*_{X \, | \, \ell_c(X) = j}(X) = \frac{\ds 
\B{P}_{X \, | \, \ell_c(X) = j} \Bigl[ X \exp \Bigl( - \frac{1}{2 \sigma^2} 
\bigl\lVert X - c_j \bigr\rVert^2 \Bigr) \Bigr]}{\ds 
\B{P}_{X \, | \, \ell_c(X) = j} \Bigl[ \exp \Bigl( - \frac{1}{2 \sigma^2} 
\bigl\lVert X - c_j \bigr\rVert^2 \Bigr) \Bigr]},
\] 
where $\ell_c$ is defined by equation \myeq{eq:classif}.
Then
\[
\C{C}_2(c') \leq \C{C}_2(c) - Q^*_X \bigl( \lVert c_{\ell_c(X)} - c'_{\ell_c
(X)} \rVert^2 \bigr) \leq \C{C}_2(c).
\] 
Accordingly $\C{R}_2(c') \leq \C{R}_2(c)$.
\end{prop}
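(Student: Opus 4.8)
The plan is to reproduce the classical two-step analysis of Lloyd's algorithm. The first step, the assignment step together with the optimization over $Q_X$, has already been carried out in the discussion preceding the statement; the second step is the update of the centers with the classification function $\ell_c$ and the tilted measure $Q^*_X$ held fixed. First I would record what the preceding paragraph establishes: combining the decomposition \eqref{eq:robustDecomp} with the fact that the infimum over $Q_X \in \C{M}_+^1(H)$ is attained at $Q^*_X$, whose density with respect to $\B{P}_X$ is $Z^{-1}\exp\bigl(-\frac{1}{2\sigma^2}\lVert X - c_{\ell_c(X)}\rVert^2\bigr)$, one has the exact identity
\[
\frac{1}{2\sigma^2}\C{C}_2(c) = \C{K}\bigl(Q^*_X, \B{P}_X\bigr) + Q^*_X\Bigl(\frac{1}{2\sigma^2}\lVert X - c_{\ell_c(X)}\rVert^2\Bigr).
\]
I would also observe that, restricted to the Vorono\"i cell $\{\ell_c(X) = j\}$, the density of $Q^*_X$ relative to $\B{P}_X$ is proportional to $\exp\bigl(-\frac{1}{2\sigma^2}\lVert X - c_j\rVert^2\bigr)$, so the conditional barycenter $Q^*_{X\,|\,\ell_c(X)=j}(X)$ equals the ratio of integrals defining $c'_j$ in the statement (for cells of positive $Q^*_X$-mass; cells of zero mass do not contribute and $c'_j$ may be chosen arbitrarily there).

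Next I would plug $Q^*_X$, which does not depend on the new centers, as a (generally suboptimal) competitor into the variational formula for $\C{C}_2(c')$. Since $\min_{j \in \dlb 1, k \drb}\lVert x - c'_j\rVert^2 \leq \lVert x - c'_{\ell_c(x)}\rVert^2$ for every $x$, this yields
\[
\frac{1}{2\sigma^2}\C{C}_2(c') \leq \C{K}\bigl(Q^*_X, \B{P}_X\bigr) + Q^*_X\Bigl(\frac{1}{2\sigma^2}\lVert X - c'_{\ell_c(X)}\rVert^2\Bigr).
\]
Conditioning on the value of $\ell_c(X)$ under $Q^*_X$ and applying the Hilbert-space Huygens (parallelogram) identity cell by cell --- legitimate exactly because $c'_j$ is the $Q^*_X$-conditional mean of $X$ on $\{\ell_c(X)=j\}$ --- gives
\[
Q^*_X\bigl(\lVert X - c_{\ell_c(X)}\rVert^2\bigr) = Q^*_X\bigl(\lVert X - c'_{\ell_c(X)}\rVert^2\bigr) + Q^*_X\bigl(\lVert c_{\ell_c(X)} - c'_{\ell_c(X)}\rVert^2\bigr).
\]
Substituting this into the previous display and using the identity for $\C{C}_2(c)$ recorded in the first step produces
\[
\C{C}_2(c') \leq \C{C}_2(c) - Q^*_X\bigl(\lVert c_{\ell_c(X)} - c'_{\ell_c(X)}\rVert^2\bigr) \leq \C{C}_2(c).
\]
Finally, since by \eqref{eq:robustQuadratic} we have $\C{R}_2(c) = 2\sigma^2\bigl(1 - \exp(-\frac{1}{2\sigma^2}\C{C}_2(c))\bigr)$ and $t \mapsto 2\sigma^2\bigl(1 - e^{-t/(2\sigma^2)}\bigr)$ is nondecreasing, $\C{C}_2(c') \leq \C{C}_2(c)$ immediately gives $\C{R}_2(c') \leq \C{R}_2(c)$.

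The main obstacle is not any single hard estimate but the careful bookkeeping around the classification functions and the conditioning: one must use the new Vorono\"i partition $\ell_{c'}$ only through the pointwise inequality $\min_j\lVert x - c'_j\rVert^2 \leq \lVert x - c'_{\ell_c(x)}\rVert^2$, so that the \emph{old} partition $\ell_c$ can be reused together with $Q^*_X$ in the variational bound for $\C{C}_2(c')$; and one must deal with Vorono\"i cells of zero $Q^*_X$-mass and with the existence and measurability of the regular conditional law $Q^*_{X\,|\,\ell_c(X)=j}$, which is where the standing hypotheses ensuring the validity of the decomposition \eqref{eq:condProba} are invoked. Everything else is the same computation as for ordinary Lloyd's algorithm, with the Gibbs weight $Z^{-1}\exp\bigl(-\lVert X - c_{\ell_c(X)}\rVert^2/(2\sigma^2)\bigr)$ in place of the uniform weights.
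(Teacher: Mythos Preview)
Your proof is correct and follows essentially the same route as the paper: the variational characterization of $\C{C}_2$ with $Q^*_X$ as optimizer for $c$ and as competitor for $c'$, the Huygens identity on each Vorono\"i cell (valid because $c'_j$ is the $Q^*_X$-conditional mean), and the pointwise inequality $\min_j \lVert x - c'_j \rVert^2 \leq \lVert x - c'_{\ell_c(x)} \rVert^2$. The paper's proof simply writes these same three ingredients as a single chain of (in)equalities starting from $\frac{1}{2\sigma^2}\C{C}_2(c)$ rather than from $\frac{1}{2\sigma^2}\C{C}_2(c')$.
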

So the update of the classification function is the same as
in the usual case, and the update of the centers performs
a conditional mean with exponential weights instead of the 
conditional mean used in the original Lloyd's algorithm.

\begin{proof}
We can see that
\begin{align*}
\frac{1}{2 \sigma^2} \C{C}_2(c) & = 
\C{K} \bigl( Q^*_X, \B{P}_X \bigr) + Q^*_X \Bigl( \frac{1}{2 \sigma^2} 
\lVert X - c_{\ell_c(X)} \rVert^2 \Bigr) \\
& =  
\C{K} \bigl( Q^*_X, \B{P}_X \bigr) + Q^*_X \Bigl( \frac{1}{2 \sigma^2} 
\lVert X - c'_{\ell_c(X)} \rVert^2 \Bigr) + Q^*_X \Bigl( 
\frac{1}{2 \sigma^2} 
\lVert c_{\ell_c(X)} -  
c'_{\ell_c(X)} \rVert^2 \Bigr)  
\\ & \geq 
\C{K} \bigl( Q^*_X, \B{P}_X \bigr) + Q^*_X \Bigl( \frac{1}{2 \sigma^2} 
\lVert X - c'_{\ell_{c'}(X)} \rVert^2 \Bigr) + 
Q^*_X \Bigl( \frac{1}{2 \sigma^2} 
\lVert c_{\ell_c(X)} -  c'_{\ell_c(X)} \rVert^2 \Bigr)  
\\ & \geq \frac{1}{2 \sigma^2} \C{C}_2(c') + 
Q^*_X \Bigl( \frac{1}{2 \sigma^2} 
\lVert c_{\ell_c(X)} -  c'_{\ell_c(X)} \rVert^2 \Bigr),  
\end{align*}
keeping in mind that $Q^*_X$ depends on $c$.
\end{proof}

\section{Study of the information $k$-means criterion}
\label{section:7}

In this section, we will study the information $k$-means criterion
$\wt{R}(\mu)$ of Proposition \vref{prop:03} for more general
models of regular conditional probability measures $\B{P}_{Y \, | \, X}$.

Consider a couple of random variables $(X, Y) \in \C{X} \times \C{Y}$,
where $\C{X}$ and $\C{Y}$ are complete separable metric spaces, 
so that we can define regular conditional probability measures.
Suppose there exists a 
reference measure $\nu \in \C{M}_+^1 \bigl( \C{Y} \bigr)$ 
such that  $ \B{P} \Bigl( \B{P}_{Y \, | \, X} \ll \nu \Bigr) = 1$. 
Define $\ds p_X = \frac{\ud \B{P}_{Y \, | \, X}}{\ud \nu}$. 
We are interested in the case where $\B{P}_{Y \, | \, X}$ 
is known therefore providing  
a bag of words model. This means that each random sample $X$ is described 
by a random probability measure $\B{P}_{ Y \, | \, X}$. In the original 
bag of words model, $\C{Y}$ is a set of words, and $\B{P}_{Y \, | \, X}$ 
is the distribution of words in a text $X$ drawn at random from some 
corpus of texts. Here we include the case where $\C{X}$ and $\C{Y}$ 
can be more general measurable spaces.  

We introduce the following generalization 
of the criterion $\wt{\C{R}}$ of Proposition \vref{prop:03}, 
that we will name the
information $k$-means criterion: 
\[ 
\inf_{q \in \bigl( \B{L}^1_{+,1}(\nu) \bigr)^k} 
\B{P}_X \Bigl( \min_{j \in \dlb 1, k \drb} \C{K}(q_j, p_{X}) \Bigr), 
\] 
where $\dlb 1, k \drb = \{1, \dots, k\}$, $\B{L}^1_{+,1}(\nu) = 
\Bigl\{ q \in \B{L}^1(\nu) \, : \, q \geq 0, \int \! q \, \ud \nu = 1 \Bigr\}$
and 
\[ 
\C{K}(q_j, p_X) = 
\begin{cases} 
\int q_j \log \bigl( q_j / p_X \bigr) \, \ud \nu, 
& \int q_j \B{1} \bigl( p_X = 0 \bigr) \, \ud \nu = 0, \\ 
+ \infty, & \text{ otherwise} 
\end{cases}
\]
is the Kullback divergence between densities.
The purpose of this section is to discuss the general properties of the information $k$-means problem and to build a mathematical framework and algorithms to perform the minimization.
As we have seen in the previous section, we chose to study this algorithm
rather than the better known $k$-means divergence algorithm
\[ 
\inf_{q \in \bigl( \B{L}_{+,1}^1(\nu) \bigr)^k} \B{P}_X 
\Bigl( \min_{j \in \dlb 1, k \drb} \C{K} \bigl( p_X, q_j \bigr) \Bigr)
\] 
because of Proposition \vref{prop:03}, showing that our proposal contains
the classical Euclidean $k$-means as a special case. More generally, using
the divergence in the way we do when the conditional probability measures
$\B{P}_{Y \, | \, X}$ belong to an exponential family ensures that
the optimal centers for a given classification function $\ell$ 
belong to that same exponential family.

We should point out that clustering histograms or more generally probability distributions based on the Kullback divergence or other information criteria is not a new subject. It has been extensively used in text categorization and image indexing, especially in word clustering to extract features or reduce the original space dimension, see \cite{Pereira}, \cite{Tishby}, \cite{slonim1999agglomerative}, \cite{Dhillon}, \cite{cao2013}, \cite{wu},
and \cite{jiang}. The clustering is essentially performed using the aforementioned $k$-means divergence algorithm. However, in the information $k$-means framework we follow a different route since the grouping step is done by minimizing the Kullback divergence with respect to its first argument instead of its second one. This leads to very different centroids, computed as geometric means of distributions instead of arithmetic means, see \cite{ben1989entropic} and \cite{veldhuis2002centroid}.
This follows from the fact that the Kullback divergence is asymmetric. 
Nevertheless, symmetric extensions of the Kullback divergence built 
upon averaged symmetrizations have been studied. 
Especially, centroids and $k$-means type 
algorithms derived from symmetrized divergence functions
are analyzed in \cite{veldhuis2002centroid}, \cite{nielsen2013jeffreys}, \cite{nielsen2014clustering} and \cite{nielsen2019jensen}.

Besides, 
following the set-up provided by the typical $k$-means divergence, \cite{Banerjee} presents a general 
$k$-means framework based on the Bregman divergence. The authors show that such criteria can be minimized iteratively using a $k$-means centroid-based algorithm. The Bregman distance encompasses many traditional similarity
measures such as the Euclidean distance, the Kullback divergence, 
the logistic loss and many others.
However, in the Kullback case, the minimization is 
performed with respect to the second
argument, and not the first as in our proposal.
Nevertheless, the study of a symmetrized version of the Bregman
divergence, and especially the resulting centroids coming from it,
is undertaken in \cite{nielsen2009sided}.

Our contribution in this paper is to provide 
a mathematical framework 
for the information $k$-means criterion.
In particular, we will prove generalization bounds
and deal with the infinite dimension case. 

Let us state some version of the Bayes rule that will 
be useful in the following discussion.
\begin{lemma}
\label{lem:1.1}
Let $\B{P}_{X,Y}$ be a joint distribution 
defined on the product of two Polish spaces.
The following statements are equivalent:
\begin{enumerate}
\item There exists a measure $\mu$ such that 
$\B{P}_{Y \, | \, X} \ll \mu$, $\B{P}_X$ almost surely;
\item  $\B{P}_{Y \, | \, X} \ll \B{P}_Y$, $\B{P}_X$ almost surely;
\item  $\B{P}_{X,Y} \ll \B{P}_X \otimes \B{P}_Y$;
\item  $\B{P}_{X \, | \, Y} \ll \B{P}_X$, $\B{P}_Y$ almost surely.
\end{enumerate}
Moreover, they imply the following identities between 
Radon–Nikodym derivatives:
\[
\frac{\ud\B{P}_{X,Y}}{\ud
\big( \B{P}_{X}\otimes \B{P}_{Y} \big)}=
\frac{\ud\B{P}_{Y\, | \,X}}{\ud\B{P}_{Y}}
=\frac{\ud\B{P}_{X\, | \,Y}}{\ud\B{P}_{X}}.
\]
\end{lemma}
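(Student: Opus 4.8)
The plan is to treat condition $(3)$ as the hub: it is manifestly symmetric in $X$ and $Y$, so once it is shown equivalent to $(2)$ the equivalence with $(4)$ follows by exchanging the roles of the two coordinates, and the three Radon--Nikodym identities will drop out of the same computation. The trivial link $(2) \Rightarrow (1)$ is obtained by taking $\mu = \B{P}_Y$.

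The substantive step is $(1) \Rightarrow (3)$. Assuming (as is implicitly required) that $\mu$ is $\sigma$-finite, I would first invoke the standard fact that on Polish spaces there is a jointly measurable version $g(x,y) \geq 0$ of the densities $\ud \B{P}_{Y\,|\,X=x}/\ud\mu$. Writing $\bar g(y) = \int g(x,y) \, \ud\B{P}_X(x)$, one gets $\ud \B{P}_Y = \bar g \, \ud\mu$ by Tonelli, and since $\int_{\{\bar g = 0\}} \bar g \, \ud\mu = 0$ the set $\{\bar g = 0\}$ is both $\B{P}_{X,Y}$-negligible and $\B{P}_Y$-negligible. Then $h(x,y) = g(x,y)/\bar g(y)$, set to $0$ where $\bar g(y) = 0$, satisfies for every bounded measurable $f$ the identity $(\B{P}_X \otimes \B{P}_Y)(fh) = \int \B{P}_X(\ud x) \int f(x,y) g(x,y) \, \mu(\ud y) = \B{P}_{X,Y}(f)$, so $h = \ud\B{P}_{X,Y}/\ud(\B{P}_X \otimes \B{P}_Y)$, which is $(3)$; moreover $\B{P}_{Y\,|\,X=x} = g(x,\cdot)\mu = h(x,\cdot)\B{P}_Y$ for $\B{P}_X$-almost every $x$, the single exceptional $x$-set coming from a Fubini argument applied to $\{(x,y) : \bar g(y) = 0, \ g(x,y) > 0\}$, so $(2)$ holds with $\ud\B{P}_{Y\,|\,X}/\ud\B{P}_Y = h$.

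The reverse link $(3) \Rightarrow (2)$ is the same Fubini computation read backwards: with $h = \ud\B{P}_{X,Y}/\ud(\B{P}_X\otimes\B{P}_Y)$, testing against bounded $f(X,Y)$ shows $\B{P}_{Y\,|\,X=x} = h(x,\cdot)\B{P}_Y$ for $\B{P}_X$-almost every $x$. Exchanging $X$ and $Y$ throughout gives $(3) \Rightarrow (4)$ with $\ud\B{P}_{X\,|\,Y}/\ud\B{P}_X = h$, and also $(4) \Rightarrow (3)$, since $(4)$ is the $X$-version of $(2)$, hence implies the $X$-version of $(1)$, which by the symmetric form of the argument above yields $(3)$. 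This closes the cycle $(1)\Leftrightarrow(2)\Leftrightarrow(3)\Leftrightarrow(4)$ and, collecting the densities produced along the way, establishes the displayed equality of the three Radon--Nikodym derivatives.

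I expect the only real difficulty to be measure-theoretic rather than conceptual: producing the jointly measurable version $g(x,y)$ of the conditional density, and making sure the various ``for $\B{P}_X$-almost every $x$'' statements hold off a single null set rather than off an $A$-dependent one. Both are classical consequences of working on Polish spaces: countably generated Borel $\sigma$-algebras, existence of regular conditional probabilities, and measurable selection of Radon--Nikodym derivatives. As with Lemma \ref{lem:1.2}, I would cite \cite{catoni2} for the careful treatment of these points rather than reproduce it here.
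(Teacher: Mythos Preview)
Your proof is correct and matches the paper's approach: the same Tonelli computation (your $\bar g = \B{P}_X[g(X,\cdot)]$ is the paper's $\ud\B{P}_Y/\ud\mu$, and both hinge on the set $\{\bar g = 0\}$ being negligible) drives the key implication, and the remaining links are handled identically via Fubini and the $X\leftrightarrow Y$ symmetry. The only cosmetic difference is that the paper routes through $(1)\Rightarrow(2)\Rightarrow(3)$---first establishing absolute continuity by the one-line check $\B{P}_X\bigl[\B{P}_{Y\,|\,X}\bigl(\ud\B{P}_Y/\ud\mu=0\bigr)\bigr]=0$, without building the density---whereas you go $(1)\Rightarrow(3)$ directly by constructing $h=g/\bar g$.
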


\begin{proof}
To prove that {\it 1.} implies {\it 2.}, it is sufficient to show that
$\B{P}_{Y \, | \, X}\Bigl( \frac{\ud\B{P}_{Y}}{\ud\mu}=0 \Bigr)=0$, 
$\B{P}_X$ almost surely.
But when {\it 1.} is true
\[
 \B{P}_{Y \, | \, X}\Biggl( \frac{\ud\B{P}_{Y}}{\ud\mu}=0 \Biggr)=
 \int \B{1}\biggl( \frac{\ud\B{P}_{Y}}{\ud\mu}=0 \biggr)\, \frac{\ud\mathbb{P}_{Y\, | \,X}}{\ud\mu} \, \ud\mu.
\]
Thus by the Tonelli-Fubini theorem
\begin{align*}
\B{P}_{X}\Biggl(
\B{P}_{Y \, | \, X}\Biggl( \frac{\ud\B{P}_{Y}}{\ud\mu}=0 \Biggr)\Biggr)&=
\B{P}_{X}\Biggl(
\int \B{1}\biggl( \frac{\ud\B{P}_{Y}}{\ud\mu}=0 \biggr)\, \frac{\ud\mathbb{P}_{Y\, | \,X}}{\ud\mu} \, \ud\mu
\Biggr)\\&=\int \B{1}\biggl( \frac{\ud\B{P}_{Y}}{\ud\mu}=0 \biggr)\,\B{P}_X \left[ \frac{\ud\mathbb{P}_{Y\, | \,X}}{\ud\mu} \right] \, \ud\mu
\\&=\int \B{1}\biggl( \frac{\ud\B{P}_{Y}}{\ud\mu}=0 \biggr)\,\frac{\ud\B{P}_{Y}}{\ud\mu}\, \ud\mu=0.
\end{align*}
Therefore $\B{P}_{Y \, | \, X}\Bigl( \frac{\ud\B{P}_{Y}}{\ud\mu}=0 \Bigr)=0$, $\B{P}_X$ almost surely.
Obviously {\it 2.} implies {\it 1.} with $\mu = \B{P}_Y$. 
Now let us show that {\it 2.} 
implies {\it 3.}
Let $f$ be a bounded measurable function, we have by Fubini's theorem
\begin{align*}
\int f \, \ud\B{P}_{X,Y}&=
\int \biggl( \int f \, \ud\B{P}_{Y \, | \, X} \biggr)\, \ud\B{P}_{X}=
\int \biggl( \int f \, \frac{\ud\B{P}_{Y \, | \, X}}{\ud\B{P}_{Y}} \, \ud\B{P}_{Y} \biggr) \,\ud\B{P}_{X}\\&=
\int  f \, \frac{\ud\B{P}_{Y \, | \, X}}{\ud\B{P}_{Y}} \, \ud \bigl(\B{P}_{Y} \otimes \ud\B{P}_{X}\bigr),
\end{align*}
implying {\it 3.} and that $\B{P}_{X}$ almost surely
\[
\frac{\ud\B{P}_{Y\, | \,X}}{\ud\B{P}_{Y}}
=\frac{\ud\B{P}_{X,Y}}{\ud
\big( \B{P}_{X}\otimes \B{P}_{Y} \big)}.
\]
We will show now that {\it 3.} implies {\it 2.}
Let $f$ be a bounded measurable function, we have by Fubini's theorem
\begin{align*}
\int f \, \ud\B{P}_{X,Y}&=
\int  f \, \frac{\ud\B{P}_{X,Y}}{\ud \bigl(\B{P}_{X} \otimes \B{P}_{Y}\bigr)} \, \ud \bigl(\B{P}_{X} \otimes \ud\B{P}_{Y}\bigr)\\&=
\int \biggl( \int f \, \frac{\ud\B{P}_{X,Y}}{\ud \bigl(\B{P}_{X} \otimes \B{P}_{Y}\bigr)} \, \ud\B{P}_{Y} \biggr)\, \ud\B{P}_{X}\\
&=\int \biggl( \int f \, \ud\B{P}_{Y \, | \, X} \biggr) \,\ud\B{P}_{X}
,
\end{align*}
showing that $\B{P}_{X}$ almost surely
$\B{P}_{Y \, | \, X} \ll \B{P}_Y$ and 
\[
\frac{\ud\mathbb{P}_{Y\, | \,X}}{\ud\mathbb{P}_{Y}}
=
\frac{\ud\mathbb{P}_{X,Y}}{\ud
\big( \mathbb{P}_{X}\otimes \mathbb{P}_{Y} \big)}.
\]
The equivalence between {\it 3.} and {\it 4.} is immediate by
interchanging the roles of $X$ and $Y$.
\end{proof}

The following lemma will be useful to optimize the 
information $k$-means criterion and is related to 
the Donsker Varadhan representation.
\begin{lemma}
\label{lem:1.3}
Let $\pi \in \mathcal{M} _{+}^{1}(\Omega)$ be a 
probability measure on the measurable space $\Omega$.
Let $h : \Omega \rightarrow \B{R} \cup \{ + \infty \}$ be a measurable 
function such that 
\[
Z = \int \exp( - h) \, \ud \pi < \infty.
\]
Let $\pi_{\exp(-h)}$
be the probability measure whose density 
with respect to $\pi$ is proportional to $\exp(-h)$ 
so that
\[
\frac{\ud \pi_{\exp(-h)} }{\ud \pi} =  \frac{\exp(-h)}{Z}.
\]
The identity
\begin{align*}
 \inf_{\eta \in \B{Z}} \biggl( \C{K}(\rho,\pi)  
+ 
\int \max \{ h, \eta \} \, \ud \rho \biggr) &=- 
\log \Biggl( \int \exp (-h) \, \ud \pi
\Biggr) + \C{K}(\rho,\pi_{\exp(-h)}) \in \B{R} \cup \{ + \infty \} 
\end{align*}
is satisfied for any $\rho \in \C{M}_+^1(\Omega)$ and implies that
\[ 
\inf_{\rho \in \C{M}_+^1( \Omega )} \inf_{\eta \in \B{Z}} \Biggl( \C{K}(\rho, \pi) + 
 \int \max \{ h, \eta \} \, \ud \rho \Biggr)=- \log \Biggl( \int \exp ( -h) \, \ud \pi
\Biggr),
\] 
the minimum being reached when $\rho = \pi_{\exp(-h)}$.
\end{lemma}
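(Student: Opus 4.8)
The plan is to reduce the asserted identity to the classical Gibbs variational formula (a form of the Donsker--Varadhan representation),
\[
\C{K}\bigl(\rho,\pi_{\exp(-h)}\bigr) = \C{K}(\rho,\pi) + \log Z + \int h \, \ud \rho,
\]
the only genuinely new point being to give a meaning to $\int h \, \ud \rho$ when $h$ is unbounded above or equal to $+\infty$ on a set of positive mass; this is exactly the role of the truncation $\max\{h,\eta\}$ together with the infimum over $\eta \in \B{Z}$. Note first that $Z \in (0,+\infty)$ --- positivity being implicit in $\pi_{\exp(-h)}$ being a probability measure --- so $\log Z$ is finite, and that for each fixed $\eta$ the integrand $\max\{h,\eta\} \geq \eta$ is bounded below, so $\int \max\{h,\eta\}\,\ud\rho \in (-\infty,+\infty]$ is always well defined.

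I would first dispose of the two degenerate cases, in which both sides equal $+\infty$. If $\C{K}(\rho,\pi) = +\infty$, then $\C{K}(\rho,\pi) + \int \max\{h,\eta\}\,\ud\rho = +\infty$ for every $\eta$, so the left-hand side is $+\infty$; and since $\pi_{\exp(-h)} \ll \pi$, the failure $\rho \not\ll \pi$ forces $\rho \not\ll \pi_{\exp(-h)}$, so the right-hand side is $+\infty$ as well. If $\C{K}(\rho,\pi) < \infty$ but $\rho(h=+\infty) > 0$, then $\max\{h,\eta\} = +\infty$ on a set of positive $\rho$-mass, so the left-hand side is $+\infty$; while $\ud\pi_{\exp(-h)}/\ud\pi = \exp(-h)/Z$ vanishes on $\{h=+\infty\}$, so $\rho \not\ll \pi_{\exp(-h)}$ and the right-hand side is $+\infty$.

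In the main case $\C{K}(\rho,\pi) < \infty$ and $\rho(h=+\infty) = 0$, one has $\rho \ll \pi_{\exp(-h)}$ (a $\pi_{\exp(-h)}$-null set coincides with $\{h=+\infty\}$ up to a $\pi$-null set, hence is $\rho$-null), and the Radon--Nikodym chain rule gives $\ud\rho/\ud\pi_{\exp(-h)} = Z \exp(h)\, g$ $\rho$-almost surely, with $g = \ud\rho/\ud\pi$. The quantitative tool is the elementary inequality $-t\log t \leq \mathrm{e}^{-1}$, valid for $t \geq 0$, which shows that $\int \bigl(\log(\ud\rho/\ud\nu)\bigr)^{-}\,\ud\rho \leq \mathrm{e}^{-1}$ for any $\rho \ll \nu$; applied to $\nu = \pi$ and to $\nu = \pi_{\exp(-h)}$ it guarantees that $\C{K}(\rho,\pi)$ and $\C{K}\bigl(\rho,\pi_{\exp(-h)}\bigr)$ are well defined in $[0,+\infty]$, that $\log g \in \B{L}^1(\rho)$ (negative part bounded by $\mathrm{e}^{-1}$, positive part integrable because $\int \log g \, \ud\rho = \C{K}(\rho,\pi) < \infty$), and, via the pointwise identity $h = \log\bigl(\ud\rho/\ud\pi_{\exp(-h)}\bigr) - \log g - \log Z$, that $h^{-} \in \B{L}^1(\rho)$ and that $h$ is $\rho$-quasi-integrable with $\int h \, \ud\rho = \C{K}\bigl(\rho,\pi_{\exp(-h)}\bigr) - \C{K}(\rho,\pi) - \log Z \in (-\infty,+\infty]$. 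Finally, $\max\{h,\eta\} - h = (\eta-h)^{+}$ decreases pointwise to $0$ as $\eta \to -\infty$ (since $h > -\infty$) and is dominated by $h^{-}$ for $\eta \leq 0$, so by monotone/dominated convergence $\inf_{\eta \in \B{Z}} \int \max\{h,\eta\}\,\ud\rho = \int h \, \ud\rho$; the finite constant $\C{K}(\rho,\pi)$ commutes with the infimum, giving $\inf_{\eta \in \B{Z}}\bigl(\C{K}(\rho,\pi) + \int\max\{h,\eta\}\,\ud\rho\bigr) = -\log Z + \C{K}\bigl(\rho,\pi_{\exp(-h)}\bigr)$, which is the claimed identity. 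The second (double infimum) identity then follows by taking $\inf_{\rho \in \C{M}_+^1(\Omega)}$ of both sides and using $\C{K}\bigl(\rho,\pi_{\exp(-h)}\bigr) \geq 0$ with equality only at $\rho = \pi_{\exp(-h)}$.

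The main obstacle is the measure-theoretic bookkeeping in the central case: one must check that the decomposition $h = \log\bigl(\ud\rho/\ud\pi_{\exp(-h)}\bigr) - \log g - \log Z$ never produces the indeterminate form $\infty - \infty$, and that $\int \max\{h,\eta\}\,\ud\rho$ genuinely converges to $\int h\,\ud\rho$. It is precisely the bound $-t\log t \leq \mathrm{e}^{-1}$, which keeps all the relevant negative parts of log-densities integrable, that makes these steps legitimate.
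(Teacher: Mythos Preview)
Your proof is correct and rests on the same core idea as the paper's: the integrability of the negative part of log-densities via $-t\log t \leq \mathrm{e}^{-1}$, combined with monotone convergence as $\eta \to -\infty$. The organization differs slightly: the paper introduces the truncated Gibbs measures $\pi_{\exp(-\max\{h,\eta\})}$, writes $\C{K}\bigl(\rho,\pi_{\exp(-\max\{h,\eta\})}\bigr) = \log Z_\eta + \int\bigl[\max\{h,\eta\} + \log(\ud\rho/\ud\pi)\bigr]\ud\rho$, and passes to the limit on both sides, whereas you work directly with $\pi_{\exp(-h)}$, show $h^{-}\in\B{L}^1(\rho)$ from the chain-rule decomposition, and conclude $\inf_{\eta}\int\max\{h,\eta\}\,\ud\rho = \int h\,\ud\rho$. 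Your explicit separation of the degenerate case $\rho(h=+\infty)>0$ is cleaner than the paper's treatment, which absorbs it implicitly; otherwise the two arguments are essentially equivalent.
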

Note that the lemma could also be written as
\[
\C{K}(\rho, \pi) + \int h \, \ud \rho = - \log \biggl( 
\int \exp( - h) \, \ud \pi \biggr) + \C{K} \bigl( \rho, \pi_{\exp(-h)} \bigr)
\] 
if we are willing to follow the convention that 
\[ 
\int h \, \ud \rho = \inf_{\eta \in \B{Z}} \int \max \{ h, \eta \} \, 
\ud \rho
\] 
and that $+ \infty - \infty = + \infty$.

\begin{proof}
See \cite[page 159]{catoni2}.
Note that the role of $\eta \in \B{Z}$ in this lemma is only 
to make sure that the integrals are always well defined 
in $\B{R} \cup \{ + \infty \}$ in the 
sense that the negative part of the integrand is integrable.
When $\rho$ is not absolutely continuous with respect to $\pi$, 
it is also not absolutely continuous with respect to $\pi_{\exp(-h)}$
since $\pi(A) = 0$ if and only if $\pi_{\exp(-h)}(A) = 0$.
In this case $\C{K}(\rho, \pi) = \C{K}(\rho, \pi_{\exp(-h)}) = + \infty$
and the identity is true, both sides being equal to $+ \infty$.
When $\rho \ll \pi$, then $\rho \ll \pi_{\exp(- 
\max \{ h, \eta \})}$ and 
\[
\frac{\ud \rho}{\ud \pi_{\exp(-\max\{h, \eta\})}} = Z_{\eta} 
\exp(\max \{ h, \eta \}) \frac{\ud \rho}{\ud \pi},
\]
where 
\[
Z_{\eta} = \int \exp( - \max \{ h, \eta \} ) \, \ud \pi < + \infty.
\]
Therefore
\[
\C{K} \bigl( \rho, \pi_{\exp ( - \max \{ h, \eta \} )} \bigr) = 
\log \bigl(Z_{\eta}\bigr) + \int \Bigl[ \max\{h,\eta\} + \log \Bigl( 
\frac{\ud \rho}{\ud \pi} \Bigr) \Bigr] \, \ud \rho.
\] 
By the monotone convergence theorem 
\[
\lim_{\eta \rightarrow - \infty} Z_{\eta} = Z \text{ and } 
\lim_{\eta \rightarrow - \infty}  
\int \Bigl[ \max\{h,\eta\} + \log \Bigl( 
\frac{\ud \rho}{\ud \pi} \Bigr) \Bigr] \, \ud \rho
= \int \Bigl[ h + \log \Bigl( 
\frac{\ud \rho}{\ud \pi} \Bigr) \Bigr] \, \ud \rho,
\]
since we know that
\[ 
\int \Bigl[ \log(Z) + h + \log \Bigl( 
\frac{\ud \rho}{\ud \pi} \Bigr) \Bigr]_- \, \ud \rho
= \int \log \Bigl( \frac{\ud \rho}{\ud \pi_{\exp( - h)}} \Bigr)_- 
\;
\frac{\ud \rho}{\ud \pi_{\exp(-h)}} \,
\ud \pi_{\exp(-h)} \leq \exp( - 1 )  < + \infty
\] 
and therefore that
\[ 
\int \Bigl[ h + \log \Bigl( 
\frac{\ud \rho}{\ud \pi} \Bigr) \Bigr]_- \, \ud \rho < + \infty.
\] 
This proves that
\begin{multline*}
\lim_{\eta \rightarrow - \infty} 
\C{K} \bigl( \rho, \pi_{\exp ( - \max \{ h, \eta \} )} \bigr) = 
\log(Z) + \int \Bigl[ h + \log \Bigl( \frac{\ud \rho}{\ud \pi} \Bigr) 
\Bigr] \, \ud \rho = 
\C{K} \bigl( \rho, \pi_{\exp ( - h )} \bigr) \\ = 
\log(Z) + \inf_{\eta \in \B{Z}} \int \Bigl[ \max \{ h, \eta \} 
+ \log \Bigl( \frac{\ud \rho}{\ud \pi} \Bigr) \Bigr] \, \ud \rho
\\ = \log(Z) + \inf_{\eta \in \B{Z}} \biggl( \int \max \{ h, \eta \} \, 
\ud \rho + \int \log \Bigl( \frac{\ud \rho}{\ud \pi} \Bigr) \, \ud \rho
\biggr) \\
= \log(Z) + \inf_{\eta \in \B{Z}} \biggl( 
\int \max \{ h, \eta \} \, \ud \rho + \C{K}(\rho, \pi) \biggr),
\end{multline*}
and therefore that
\[
\C{K} \bigl( \rho, \pi_{\exp(-h)} \bigr) - \log(Z) = 
\inf_{\eta \in \B{Z}} \biggl( \C{K}(\rho, \pi) + \int
\max \{ h, \eta \} \, \ud \rho \biggr)
\] 
as stated in the lemma. The second statement of the lemma
is a consequence of the fact that the Kullback divergence 
is non negative.
\end{proof}

Let us now formulate a precise definition of the 
geometric mean of conditional probability measures
and show that it is their optimal center according 
to the information projection criterion.
\begin{lemma}
\label{lem:1.4}
Let $\B{P}_{X,Y}$ be a joint distribution defined on the 
product of two Polish spaces.
Assume 
that $\B{P}_X \bigl( \B{P}_{Y \, | \, X} \ll \B{P}_Y \bigr) = 1$. 
Consider the normalizing constant
\[
Z= \B{P}_{Y}\biggl( \exp \bigl[ -\C{K} \bigl( \B{P}_{X}, \, \B{P}_{X \, | \, Y} \bigr)\bigr] \biggr)
= \B{P}_Y \Biggl( \exp \biggl\{ \B{P}_X \biggl[ \log \biggl( 
\frac{ \ud \B{P}_{Y \, | \, X}}{\ud \B{P}_Y} \biggr) \biggr] \biggr\} 
\Biggr).
\]
Obviously, $Z \in [0,1]$. 
If $Z = 0$, then
\[ 
\inf_{Q_Y \in \C{M}_+^1(\C{Y})} \,
 \B{P}_X \bigl[ \C{K}( Q_Y ,\B{P}_{Y \, | \, X} )
\bigr] = + \infty. 
\] 
Otherwise, $Z > 0$ and for any $Q_Y \in \C{M}_+^1(\C{Y})$, 
\[
\B{P}_X \bigl[ \C{K} \bigl( Q_Y , \B{P}_{Y \, | \, X} \bigr) \bigr] = 
\C{K} \bigl( Q_Y, Q^{\star}_Y \bigr) + \B{P}_X \bigl[ 
\C{K} \bigl( Q^\star_Y, \B{P}_{Y \, | \, X} \bigr) \bigr]
= \C{K} \bigl( Q_Y, Q^{\star}_Y \bigr) 
+ \log \bigl( Z^{-1} \bigr),
\]
where $Q^{\star}_Y \ll \B{P}_Y$ is defined by the relation
\begin{align}
\nonumber
 \frac{\ud Q_{Y}^{\star}}{\ud \B{P}_{Y}}
 &= Z^{-1} \, \exp \bigl[ -\C{K} \bigl( \B{P}_{X}, \, \B{P}_{X \, | \, Y} \bigr)\bigr]\\
\label{eq:geomMean}
 &= Z^{-1} \,\exp \Biggl\{ \B{P}_X \Biggl[ \log \Biggl( \frac{\ud \B{P}_{Y \, | \, X}}{\ud 
\B{P}_{Y}} \Biggr)  \Biggr]   \Biggr\}.
\end{align}
Consequently
\[
\inf_{Q_Y \in \C{M}_+^1(\C{Y})} \,
 \B{P}_X \bigl[ \C{K}( Q_Y ,\B{P}_{Y \, | \, X} )\bigr] =\B{P}_X \bigl[ \C{K}( Q_{Y}^{\star} ,\B{P}_{Y \, | \, X} )\bigr]
 =\log\bigl(Z^{-1}\bigr) < \infty,
\]
The probability measure $Q_{Y}^{\star}$ represents the geometric 
mean of $\B{P}_{Y \, | \, X}$ with respect to
$\B{P}_X$. 
\end{lemma}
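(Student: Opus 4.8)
My plan is to derive everything from two applications of the Donsker--Varadhan identity of Lemma \ref{lem:1.3}, glued together by the Bayes identities of Lemma \ref{lem:1.1}. First I would set up notation: by Lemma \ref{lem:1.1}, the hypothesis $\B{P}_X \bigl( \B{P}_{Y \, | \, X} \ll \B{P}_Y \bigr) = 1$ gives $\B{P}_{X,Y} \ll \B{P}_X \otimes \B{P}_Y$, and I write $r = \ud \B{P}_{X,Y} / \ud ( \B{P}_X \otimes \B{P}_Y ) = \ud \B{P}_{Y \, | \, X} / \ud \B{P}_Y = \ud \B{P}_{X \, | \, Y} / \ud \B{P}_X$ for the common density, which satisfies $\B{P}_X \bigl[ r(\cdot, y) \bigr] = 1$ for $\B{P}_Y$-a.e.\ $y$ and $\B{P}_Y \bigl[ r(x, \cdot) \bigr] = 1$ for $\B{P}_X$-a.e.\ $x$. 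Set $g(y) = \B{P}_X \bigl[ \log r(\cdot, y) \bigr]$; since $\log_+ r \leq r$, this integral is well defined in $[-\infty, 0]$, the upper bound being Jensen's inequality for the concave function $\log$. Hence $\exp(g) \leq 1$, so $Z = \B{P}_Y \bigl( \exp(g) \bigr) \in [0,1]$, and rewriting $r(\cdot, y) = \ud \B{P}_{X \, | \, Y = y} / \ud \B{P}_X$ one checks that $\exp \bigl( g(y) \bigr) = \exp \bigl[ - \C{K} ( \B{P}_X, \B{P}_{X \, | \, Y = y} ) \bigr]$, which identifies the two displayed formulas for $Z$ and for $\ud Q^\star_Y / \ud \B{P}_Y$.

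The core step is the identity
\[
\B{P}_X \bigl[ \C{K} ( Q_Y, \B{P}_{Y \, | \, X} ) \bigr] = \C{K} ( Q_Y, \B{P}_Y ) - Q_Y(g), \qquad Q_Y \in \C{M}_+^1 ( \C{Y} ),
\]
with the convention $- Q_Y(g) = Q_Y(-g) = \inf_{\eta \in \B{Z}} Q_Y \bigl( \max \{ -g, \eta \} \bigr) \in [0, +\infty]$. I would prove it by applying Lemma \ref{lem:1.3} on $\C{X} \times \C{Y}$ with $\pi = \B{P}_X \otimes \B{P}_Y$ and $h = - \log r$: since $\int \exp(-h) \, \ud \pi = \int r \, \ud ( \B{P}_X \otimes \B{P}_Y ) = 1$, the tilted measure $\pi_{\exp(-h)}$ is $\B{P}_{X,Y}$, so Lemma \ref{lem:1.3} yields $\C{K} ( \B{P}_X \otimes Q_Y, \B{P}_{X,Y} ) = \C{K} ( \B{P}_X \otimes Q_Y, \B{P}_X \otimes \B{P}_Y ) - \int \log r \, \ud ( \B{P}_X \otimes Q_Y )$. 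Lemma \ref{lem:1.2} turns the left-hand side into $\B{P}_X \bigl[ \C{K} ( Q_Y, \B{P}_{Y \, | \, X} ) \bigr]$, tensorization of the Kullback divergence turns the first right-hand term into $\C{K} ( Q_Y, \B{P}_Y )$, and a Tonelli--Fubini exchange --- legitimate because $\int \log_+ r \, \ud ( \B{P}_X \otimes Q_Y ) \leq \int r \, \ud ( \B{P}_X \otimes Q_Y ) = 1 < \infty$ --- turns $\int \log r \, \ud ( \B{P}_X \otimes Q_Y )$ into $Q_Y(g)$. (Alternatively: apply Lemma \ref{lem:1.3} on $\C{Y}$ for each fixed $x$ with $\pi = \B{P}_Y$, $h = - \log r(x, \cdot)$, then integrate in $x$.)

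Then I would apply Lemma \ref{lem:1.3} a second time, on $\C{Y}$, with $\pi = \B{P}_Y$ and $h = - g \geq 0$. If $Z > 0$, then $\int \exp(-h) \, \ud \pi = Z \in (0,1]$ and $\pi_{\exp(-h)} = Q^\star_Y$, so $\C{K} ( Q_Y, \B{P}_Y ) + Q_Y(-g) = \log ( Z^{-1} ) + \C{K} ( Q_Y, Q^\star_Y )$; together with the core step this gives $\B{P}_X \bigl[ \C{K} ( Q_Y, \B{P}_{Y \, | \, X} ) \bigr] = \C{K} ( Q_Y, Q^\star_Y ) + \log ( Z^{-1} )$. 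Choosing $Q_Y = Q^\star_Y$ yields $\B{P}_X \bigl[ \C{K} ( Q^\star_Y, \B{P}_{Y \, | \, X} ) \bigr] = \log ( Z^{-1} )$, which produces the whole chain of equalities in the statement, and the infimum claim follows from $\C{K} ( Q_Y, Q^\star_Y ) \geq 0$ with equality only at $Q_Y = Q^\star_Y$ (the form of $\ud Q^\star_Y / \ud \B{P}_Y$ also makes explicit that $Q^\star_Y$ is a normalized geometric mean of the measures $\B{P}_{Y \, | \, X}$). If $Z = 0$, then $\exp(g) = 0$ $\B{P}_Y$-a.e., i.e.\ $- g = + \infty$ $\B{P}_Y$-a.e.; so for any $Q_Y$ either $Q_Y \not\ll \B{P}_Y$ and $\C{K} ( Q_Y, \B{P}_Y ) = + \infty$, or $Q_Y \ll \B{P}_Y$ and $- g = + \infty$ also $Q_Y$-a.e., whence $Q_Y(-g) = + \infty$; in both cases the core step gives $\B{P}_X \bigl[ \C{K} ( Q_Y, \B{P}_{Y \, | \, X} ) \bigr] = + \infty$, which is the last claim.

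The only genuine delicacy I anticipate is the $\pm \infty$ bookkeeping in the two uses of Lemma \ref{lem:1.3}: making sure that the Tonelli--Fubini exchange and the $\eta$-truncation convention stay consistent in the degenerate regimes $Z = 0$, $Q_Y \not\ll \B{P}_Y$, and $g = - \infty$ on a set of positive $\B{P}_Y$-measure. The measurability of $(x,y) \mapsto \log r(x,y)$ and of $g$ is routine.
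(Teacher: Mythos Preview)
Your proof is correct and follows essentially the same route as the paper's: both establish the identity $\B{P}_X\bigl[\C{K}(Q_Y,\B{P}_{Y\,|\,X})\bigr]=\C{K}(Q_Y,\B{P}_Y)+Q_Y\bigl[\C{K}(\B{P}_X,\B{P}_{X\,|\,Y})\bigr]$ (your $-g$ is exactly $\C{K}(\B{P}_X,\B{P}_{X\,|\,Y})$) and then apply Lemma~\ref{lem:1.3} on $\C{Y}$ with $\pi=\B{P}_Y$ and $h=-g$. The only minor difference is that the paper obtains this identity directly by decomposing $\C{K}(\B{P}_X\otimes Q_Y,\B{P}_{X,Y})$ twice via Lemma~\ref{lem:1.2} (once conditioning on $X$, once on $Y$), which sidesteps the product-space application of Lemma~\ref{lem:1.3} and the Fubini bookkeeping you flag as the main delicacy.
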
 
\begin{proof}
By Lemma \vref{lem:1.2},
\begin{equation}
\label{eq:IKmeans}
\B{P}_X \bigl[ \C{K} \bigl( Q_Y, \B{P}_{Y \, | \, X} \bigr) \bigr] 
= \C{K} \bigl( \B{P}_X \otimes Q_Y, \B{P}_{X, \, Y} \bigr) \bigr] 
= \C{K} \bigl( Q_Y, \B{P}_Y \bigr) + Q_Y \bigl[ \C{K} \bigl( 
\B{P}_X, \B{P}_{X \, | \, Y} \bigr) \bigr]. 
\end{equation}
Thus, when \eqref{eq:IKmeans} is finite, $Q_Y \ll \B{P}_Y$
and 
\[
Q_Y \Bigl[ \C{K} \bigl( \B{P}_X, \B{P}_{X \, | \, Y} \bigr) < + \infty \bigr) \Bigr] = 1, 
\] 
so that 
\[
\B{P}_Y \Bigl[ \C{K} \bigl( \B{P}_X, 
\B{P}_{X \, | \, Y} \bigr) < + \infty \bigr) \Bigr] > 0, 
\]
implying that $Z > 0$. 
Assuming from now on that \eqref{eq:IKmeans} is finite, introduce 
\[
\C{A} = \Bigl\{ y \, : \, 
\C{K} \bigl( \B{P}_X, 
\B{P}_{X \, | \, Y = y} \bigr) < + \infty \Bigr\}. 
\]
From Lemma \vref{lem:1.3} and \eqref{eq:IKmeans}, for any $Q_Y
\in \C{M}_+^1(\C{A})$,
\begin{multline*}
\B{P}_X \bigl[ \C{K} \bigl( Q_Y, \B{P}_{Y \, | \, X} \bigr) \bigr] 
\\ = \underbrace{- \log \bigl[ \B{P}_Y(\C{A}) \bigr] 
- \log \B{P}_{Y \, | \, Y \in \C{A}} \Bigl\{ \exp \Bigl[ 
- \C{K} \bigl( \B{P}_X, \B{P}_{X \, | \, Y} \bigr) \Bigr] \Bigr\}}_{
= \log \bigl( Z^{-1} \bigr)} + 
\C{K} \bigl( Q_Y, Q^{\star}_Y \bigr) \\ 
= \B{P}_X \bigl[ \C{K} \bigl( Q^\star_Y, \B{P}_{Y \, | \, X} \bigr) \bigr] 
+ 
\C{K} \bigl( Q_Y, Q^{\star}_Y \bigr). 
\end{multline*}
Moreover, when $Q_Y(\C{A}) < 1$, $Q_Y \not\ll Q^\star_Y$, 
so that both members are equal to $ + \infty$.
The identity \eqref{eq:geomMean} is a consequence of Lemma 
\vref{lem:1.1}.
\end{proof}

We are now ready to express the minimum of the information 
$k$-means criterion in different ways involving the underlying
classification function and optimal centers.
\begin{prop}
\label{prop:1.5}
The information $k$-means problem can be expressed as
\begin{multline*}\inf_{q \in \bigl( \B{L}^1_{+,1}(\nu) \bigr)^k} \B{P}_X \Bigl( \min_{j \in \dlb 1, k \drb} \C{K}(q_j, p_{X}) \Bigr)
=\inf_{\ell:\C{X}\mapsto \dlb 1, k \drb} \;
\inf_{(q_1,\dots,q_k) \in \bigl( \B{L}^1_{+,1}(\nu) \bigr)^k} \;
\B{P}_X \Bigl(  \C{K}(q_{\ell(X)}, p_{X}) \Bigr) \\
 =\inf_{(q_1,\dots,q_k) \in \bigl( \B{L}^1_{+,1}(\nu) \bigr)^k} \;
\inf_{\ell:\C{X}\mapsto \dlb 1, k \drb} \;
\; \B{P}_X \Bigl(  \C{K}(q_{\ell(X)}, p_{X}) \Bigr) 
\\ =\inf_{(q_1,\dots,q_k) \in \bigl( \B{L}^1_{+,1}(\nu) \bigr)^k}
\; \B{P}_X \Bigl(  \C{K}(q_{\ell^{\star}_q(X)}, p_{X}) \Bigr) \\
 = \inf_{\ell:\C{X}\mapsto \dlb 1, k \drb} \, \B{P}_X \Bigl(  
\C{K}(q^{\star,\ell}_{\ell(X)}, p_{X}) \Bigr)
\\ = \inf_{ \ell:\C{X}\mapsto \dlb 1, k \drb}\;  \B{P}_X \Bigl( \log \bigl( Z^{-1}_{\ell(X)} \bigr)
\Bigr),
\end{multline*} 
where the infimum in $\ell$ is taken on measurable classification 
functions $\ell$, where $\ell^{\star}_q:\C{X}\mapsto \dlb 1, k \drb $ is the best classification function
for a fixed $q=(q_1,\dots,q_k)$ defined as
\[
\ell^{\star}_q(x) = \arg \min_{j \in \dlb 1, k \drb} \C{K}(q_j, p_x), \qquad x \in \C{X},
\]
whereas $q^{\star,\ell}_1, \dots, q^{\star,\ell}_k$ are
the best information $k$-means centers
with respect to $\ell(X)$
defined as 
\[
q^{\star,\ell}_j =
Z_j^{-1} \; \exp \Bigl\{ \B{P}_{X \, | \, \ell(X) = j}
 \bigl[ \log ( p_X ) \bigr] \Bigr\}, \qquad j \in \dlb 1, k \drb,
\]
where
\[
 Z_{j} = \int \exp \Bigl\{ \B{P}_{X \, | \, \ell(X) = j} 
\bigl[ \log ( p_X ) \bigr] \Bigr\} \, \ud\nu,
\]
with the convention that $q^{\star, \ell}_{j}$ can 
be given any arbitrary value in the case when $Z_j = 0$,
the corresponding criterion being in this case infinite.
Besides, we have the following Pythagorean identity
\[ 
\B{P}_X \Bigl( \C{K}(q_{\ell(X)}, p_X ) \Bigr)  
 = \B{P}_X \Bigl( \C{K}(q^{\star,\ell}_{\ell(X)}, p_X ) \Bigr)+
\B{P}_X \Bigl( \C{K} \bigl( q_{\ell(X)}, q^{\star,\ell}_{\ell(X)} 
\bigr) \Bigr). 
\] 
\end{prop}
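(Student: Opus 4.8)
The plan is to prove the chain of equalities by isolating two elementary reductions --- optimizing over the classification function $\ell$ for a fixed family $q = (q_1,\dots,q_k)$, and optimizing over $q$ for a fixed $\ell$ --- and then reassembling them, the interchange of the two infima and the handling of $\pm\infty$ being the only steps that require care.

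First I would treat the expressions involving $\ell^{\star}_q$. Fix $q \in (\B{L}^1_{+,1}(\nu))^k$. Choosing the smallest minimizing index makes $\ell^{\star}_q$ a measurable function of $x$, being the $\arg\min$ of finitely many measurable maps $x \mapsto \C{K}(q_j, p_x)$, and by construction $\C{K}(q_{\ell^{\star}_q(x)}, p_x) = \min_{j \in \dlb 1, k \drb} \C{K}(q_j, p_x) \leq \C{K}(q_{\ell(x)}, p_x)$ for every measurable $\ell$ and every $x$. Integrating against $\B{P}_X$ yields
\[
\inf_{\ell} \B{P}_X\bigl( \C{K}(q_{\ell(X)}, p_X) \bigr) = \B{P}_X\bigl( \min_{j} \C{K}(q_j, p_X) \bigr) = \B{P}_X\bigl( \C{K}(q_{\ell^{\star}_q(X)}, p_X) \bigr),
\]
and taking $\inf_q$ identifies the first expression of the proposition with the $\inf_q \inf_{\ell}$ expression and with the $\ell^{\star}_q$ expression. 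Since $\inf_{\ell}\inf_q = \inf_q\inf_{\ell}$, both being the infimum over the set of pairs $(\ell, q)$, this also gives the equality with the $\inf_{\ell}\inf_q$ expression.

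Next I would carry out the optimization over $q$ for a fixed measurable $\ell$. Writing $\alpha_j = \B{P}_X(\ell(X) = j)$, the criterion decouples over Vorono\"i cells,
\[
\B{P}_X\bigl( \C{K}(q_{\ell(X)}, p_X) \bigr) = \sum_{j \,:\, \alpha_j > 0} \alpha_j\, \B{P}_{X \, | \, \ell(X) = j}\bigl[ \C{K}(q_j, p_X) \bigr],
\]
all terms being nonnegative, so it suffices to minimize each $\B{P}_{X \, | \, \ell(X) = j}[ \C{K}(q_j, p_X) ]$ over $q_j \in \B{L}^1_{+,1}(\nu)$ separately. For this I would invoke Lemma \ref{lem:1.4} applied to the joint law $\B{P}_{X \, | \, \ell(X) = j} \otimes \B{P}_{Y \, | \, X}$, whose standing hypothesis follows from $\B{P}(\B{P}_{Y \, | \, X} \ll \nu) = 1$ through the implication $1 \Rightarrow 2$ of Lemma \ref{lem:1.1}; after rewriting the resulting geometric mean and normalizing constant relative to the fixed reference $\nu$ --- a Radon--Nikodym computation in which the density $\ud\B{P}_Y / \ud\nu$ cancels --- one recovers exactly $q^{\star,\ell}_j$ and $Z_j$, with minimal value $\log(Z_j^{-1})$. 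Equivalently and more directly, one may apply Lemma \ref{lem:1.3} with $\pi = \nu$, $h = - \B{P}_{X \, | \, \ell(X) = j}[\log p_X]$ and $\rho = q_j\,\nu$, using Fubini--Tonelli to write $\B{P}_{X \, | \, \ell(X) = j}[ \C{K}(q_j, p_X) ] = \C{K}(q_j\nu, \nu) + \int h \, \ud(q_j\nu)$; the ``Note that'' form of Lemma \ref{lem:1.3} then gives, cell by cell, the identity $\B{P}_{X \, | \, \ell(X) = j}[ \C{K}(q_j, p_X) ] = \log(Z_j^{-1}) + \C{K}(q_j, q^{\star,\ell}_j)$. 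Multiplying by $\alpha_j$ and summing over $j$ produces at once the value $\inf_q \B{P}_X( \C{K}(q_{\ell(X)}, p_X) ) = \B{P}_X( \C{K}(q^{\star,\ell}_{\ell(X)}, p_X) ) = \B{P}_X( \log Z^{-1}_{\ell(X)} )$, reached at $q = q^{\star,\ell}$, and the stated Pythagorean identity; taking $\inf_{\ell}$ then closes the chain with the $q^{\star,\ell}$ expression and the $\log Z^{-1}$ expression.

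The main obstacle is measure-theoretic rather than conceptual. One must justify exchanging $\int \cdot \, \ud\nu$ with $\B{P}_{X \, | \, \ell(X) = j}[\cdot]$ for the integrand $q_j \log(q_j / p_X)$, which has no fixed sign, so that Fubini--Tonelli has to be applied to positive and negative parts with the convention $+\infty - \infty = +\infty$; one must check the measurability of $y \mapsto \B{P}_{X \, | \, \ell(X) = j}[\log p_X(y)]$; and one must dispose of the degenerate cases $\alpha_j = 0$ (an empty cell, absent from the sum) and $Z_j = 0$ (where the cell infimum, hence the whole criterion for that $\ell$, equals $+\infty$, in agreement with the stated convention, the argument of Lemma \ref{lem:1.4} showing that no admissible $q_j$ renders $\B{P}_{X \, | \, \ell(X) = j}[ \C{K}(q_j, p_X) ]$ finite). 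These are precisely the points settled in the references cited for Lemmas \ref{lem:1.2}, \ref{lem:1.3} and \ref{lem:1.4}, so I would appeal to them rather than reproduce the arguments.
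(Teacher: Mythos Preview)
Your proposal is correct and follows essentially the same route as the paper, which proves the proposition in one line by applying Lemma~\ref{lem:1.4} to the conditional law $\B{P}_{X,\,Y\,|\,\ell(X)=j}$ on each cell. You have simply unpacked that invocation --- the decoupling over cells, the identification of $q^{\star,\ell}_j$ and $Z_j$, the Pythagorean identity, and the elementary interchange of infima --- with more detail than the paper supplies.
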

\begin{proof}
This proposition is a straightforward consequence of Lemma \vref{lem:1.4}
applied to $\B{P}_{X, \, Y \, | \, \ell(X) = j}$.
\end{proof}

It may be of some help to state the empirical counterpart of 
the previous proposition, where formulas are somehow more 
explicit.
\begin{corollary}
 Let $X_1,\dots,X_n$ be an i.i.d sample drawn from $\B{P}_X$.
 Then, the empirical version of the information $k$-means problem
 tries to partition the observations $p_{X_1},\dots,p_{X_n}$ into
 $k$-clusters, what is expressed here by
 \begin{multline*}\inf_{ q \in \bigl( \B{L}^1_{+,1}(\nu) \bigr)^k} \frac{1}{n} \sum_{i=1}^n 
\min_{j \in \dlb 1, k \drb} \C{K} \bigl( q_{j}, p_{X_i} \bigr)=
\inf_{\ell : \dlb 1, n \drb \rightarrow \dlb 1, k \drb} \,
 \inf_{ q \in \bigl( \B{L}^1_{+,1}(\nu) \bigr)^k} \;\frac{1}{n} \sum_{i=1}^{n} 
 \C{K}\bigl( q_{\ell(i)},p_{X_i} \bigr)\\=
 \inf_{ q \in \bigl( \B{L}^1_{+,1}(\nu) \bigr)^k} 
 \inf_{\ell : \dlb 1, n \drb \rightarrow \dlb 1, k \drb} \,
 \;\frac{1}{n} \sum_{i=1}^{n} \C{K}\bigl( q_{\ell(i)},p_{X_i} \bigr)=  \inf_{ q \in \bigl( \B{L}^1_{+,1}(\nu) \bigr)^k} 
 \;\frac{1}{n} \sum_{i=1}^{n} \C{K} \bigl( q_{\ell_q^\star(i)}, p_{X_i} \bigr) \\
 = \inf_{\ell : \dlb 1, n \drb \rightarrow \dlb 1, k \drb} \, \frac{1}{n} \sum_{i=1}^n 
\C{K} \bigl( q^{\star,\ell}_{\ell(i)}, p_{X_i} \bigr) 
= \inf_{ \ell : \dlb 1, n \drb \rightarrow \dlb 1, k \drb}\;  \sum_{j=1}^{k}  \frac{\bigl\lvert \ell^{-1}(j) \bigr\rvert}{n}
\; \log \bigl( Z^{-1}_{j} \bigr),
\end{multline*} 
where $\ell^{\star}_q:\C{X}\mapsto \dlb 1, k \drb $ is the best classification function
for a fixed $q=(q_1,\dots,q_k)$ defined as
\[ 
\ell^{\star}_q(i) = \arg \min_{j \in \dlb 1, k \drb} \C{K} (q_j, p_{X_i})
\] 
whereas $q^{\star,\ell}_{j}, j \in \dlb 1, k \drb$ are
the information $k$-means centers defined as 
\[
q^{\star,\ell}_{j} =
Z^{-1}_{j} \,  \Biggl(
\prod_{i \in \ell^{-1}(j)} p_{X_i}
\Biggr)^{1/|\ell^{-1}(j)|},
\]
where 
\[
 Z_{j}= \int \Biggl(\prod_{i \in \ell^{-1}(j)} p_{X_i}
\Biggr)^{1/|\ell^{-1}(j)|} \, \ud \nu.
\]
\end{corollary}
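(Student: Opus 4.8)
The plan is to obtain the corollary by specializing Proposition~\ref{prop:1.5} to the empirical distribution $\oB{P}_X = \frac{1}{n} \sum_{i=1}^n \delta_{X_i}$ substituted for $\B{P}_X$: every identity of Proposition~\ref{prop:1.5} holds for an arbitrary distribution of $X$ on the Polish space $\C{X}$, so it holds in particular for $\oB{P}_X$, and the only task left is to rewrite each term in the explicit form stated in the corollary. Equivalently, one may re-run the short argument behind Proposition~\ref{prop:1.5} directly: the first two equalities are the elementary identity ``a minimum over a finite set is an infimum over selections'' applied pointwise in $q$, together with the trivial interchange of the two infima, and the remaining equalities come from applying Lemma~\ref{lem:1.4} to each cluster.

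First I would set up the dictionary between classification functions on $\C{X}$ and maps on the index set. Against $\oB{P}_X$, a measurable map $\ell : \C{X} \to \dlb 1, k \drb$ enters the criterion only through $\bigl(\ell(X_1), \dots, \ell(X_n)\bigr)$, so $\B{P}_X\bigl(\C{K}(q_{\ell(X)}, p_X)\bigr)$ becomes $\frac{1}{n} \sum_{i=1}^n \C{K}(q_{\ell(i)}, p_{X_i})$ under the identification of $\ell$ with the induced map $\dlb 1, n \drb \to \dlb 1, k \drb$; conversely, since points of a Polish space are measurable, every such index map is induced by some measurable $\ell$ on $\C{X}$, so the infimum over measurable classification functions in Proposition~\ref{prop:1.5} turns into the infimum over all $\ell : \dlb 1, n \drb \to \dlb 1, k \drb$. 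This yields the first four displayed expressions, with $\ell^\star_q(i) = \arg\min_{j \in \dlb 1, k \drb} \C{K}(q_j, p_{X_i})$, directly from the corresponding expressions of Proposition~\ref{prop:1.5}.

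Then I would unwind the optimal centers. For a fixed $\ell$ and a label $j$ with $\ell^{-1}(j) \neq \emptyset$, the conditional law $\oB{P}_{X \, | \, \ell(X) = j}$ is uniform on $\{X_i : i \in \ell^{-1}(j)\}$, hence
\[
\oB{P}_{X \, | \, \ell(X) = j}\bigl[\log(p_X)\bigr] = \frac{1}{|\ell^{-1}(j)|} \sum_{i \in \ell^{-1}(j)} \log(p_{X_i}),
\]
and the center and normalizing constant of Proposition~\ref{prop:1.5} specialize to
\[
q^{\star,\ell}_j = Z_j^{-1} \Biggl( \prod_{i \in \ell^{-1}(j)} p_{X_i} \Biggr)^{1/|\ell^{-1}(j)|}, \qquad Z_j = \int \Biggl( \prod_{i \in \ell^{-1}(j)} p_{X_i} \Biggr)^{1/|\ell^{-1}(j)|} \, \ud \nu,
\]
exactly as in the statement. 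Likewise $\B{P}_X\bigl(\log(Z_{\ell(X)}^{-1})\bigr)$ evaluated at $\oB{P}_X$ equals $\sum_{j=1}^{k} \frac{|\ell^{-1}(j)|}{n} \log(Z_j^{-1})$, which is the last displayed expression; the conventions for $Z_j = 0$ (criterion equal to $+\infty$) and for empty clusters (weight $0$, arbitrary center) are inherited from Proposition~\ref{prop:1.5}.

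I do not anticipate a real obstacle: the corollary is a concrete rewriting of an already-proved statement for one particular choice of the distribution of $X$. The only mild subtlety is the identification of measurable classification functions on $\C{X}$ with arbitrary maps on $\dlb 1, n \drb$, and this is harmless because, when several of the $X_i$ coincide, the criterion depends on $\ell$ only through the partition of $\dlb 1, n \drb$ it induces, and the uniform conditional laws, the geometric-mean centers and the weights are then read off in the same way.
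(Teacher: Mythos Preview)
Your proposal is correct and follows exactly the paper's approach: the paper's proof is simply ``apply the previous proposition to the empirical measure $\oB{P}_X = \frac{1}{n}\sum_{i=1}^n \delta_{X_i}$,'' and your argument is precisely this specialization together with the (welcome) explicit unwinding of the dictionary between measurable classification functions and maps on $\dlb 1, n \drb$ and of the geometric-mean centers.
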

\begin{proof}
Apply the previous proposition to the empirical measure
 $\ds \ov{\B{P}}_X = \frac{1}{n} 
\sum_{i=1}^n \delta_{X_i}$ of the sample 
$X_1,\dots,X_n$.
\end{proof}  

We will now see that when the sample is in $\B{L}^2(\nu)$
and has a finite second moment, the optimal centers for 
a given classification function are also in $\B{L}^2(\nu)$, 
so that the optimization of the centers can be reduced to 
this space.
\begin{lemma}
\label{lem:1.6}
Let us assume that $\B{P}_X \Bigl( \int p_X^2 \, \ud \nu \Bigr) < \infty$. 
Then, the optimal centers $q^{\star,\ell}_{j}$ defined in the previous lemma
verify $q^{\star,\ell}_{j}\in \B{L}^2(\nu)$. Furthermore,
in this case
\begin{multline*}	
\inf \; \biggl\{ \; \B{P}_X 
\Bigl( \min_{j \in \dlb 1, k \drb} \C{K}( q_j, p_X ) \Bigr)
\, : \, q \in \Bigl( \B{L}_{+,1}^1(\nu) \Bigr)^k  \; \biggr\}
\\ = \inf \; \biggl\{ \; \B{P}_X 
\Bigl( \min_{j \in \dlb 1, k \drb} \C{K}( q_j, p_X ) \Bigr)
\, : \, q \in \Bigl( \B{L}_{+,1}^1(\nu) \cap \B{L}^2(\nu) \Bigr)^k \; \biggr\}. 
\end{multline*}
\end{lemma}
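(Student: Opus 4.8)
The plan is to prove the two assertions in turn: first that each center $q^{\star,\ell}_j$ lies in $\B{L}^2(\nu)$ — the substantive point — and then to deduce the equality of the two infima formally from this and from Proposition~\ref{prop:1.5}.

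First I would fix a classification function $\ell$ and a cluster $j \in \dlb 1, k \drb$ with $\B{P}(\ell(X) = j) > 0$ and $Z_j > 0$, so that $q^{\star,\ell}_j = Z_j^{-1} \exp\bigl\{ \B{P}_{X \, | \, \ell(X) = j}[\log p_X] \bigr\}$ is a genuine normalized geometric mean; the remaining clusters are immaterial, their centers entering none of the criteria and being freely replaceable by, say, the constant density $1 \in \B{L}^1_{+,1}(\nu) \cap \B{L}^2(\nu)$ (recall that $\nu$ is a probability measure). The key step is to dominate the geometric mean by an arithmetic mean through Jensen's inequality for the convex function $\exp$: for $\nu$-almost every $y$,
\[
\exp\bigl\{ \B{P}_{X \, | \, \ell(X) = j}[ \log p_X(y) ] \bigr\} \leq \B{P}_{X \, | \, \ell(X) = j}\bigl[ p_X(y) \bigr],
\]
the inner conditional expectation being well defined in $[-\infty, + \infty)$ for $\nu$-almost every $y$ because $\log^+ p_X \leq p_X$ and $\int \B{P}_{X \, | \, \ell(X) = j}[ p_X ] \, \ud \nu = 1$. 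Applying Jensen once more to $t \mapsto t^2$, then integrating in $y$ against $\nu$ and invoking Tonelli's theorem, I obtain
\[
\int \bigl( q^{\star,\ell}_j \bigr)^2 \, \ud \nu \leq Z_j^{-2} \, \B{P}_{X \, | \, \ell(X) = j}\Bigl( \int p_X^2 \, \ud \nu \Bigr) \leq \frac{Z_j^{-2}}{\B{P}(\ell(X) = j)} \, \B{P}_X\Bigl( \int p_X^2 \, \ud \nu \Bigr) < \infty
\]
by the standing hypothesis, so that $q^{\star,\ell}_j \in \B{L}^2(\nu)$.

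For the equality of the infima, the inequality $\leq$ is trivial since the left-hand infimum ranges over the larger set $\bigl( \B{L}^1_{+,1}(\nu) \bigr)^k$; the content is the reverse inequality. For it, I would invoke Proposition~\ref{prop:1.5}, which identifies the left-hand infimum with $\inf_{\ell} \B{P}_X\bigl( \C{K}(q^{\star,\ell}_{\ell(X)}, p_X) \bigr)$ over measurable classification functions $\ell$. For each $\ell$ with finite criterion the first part gives $q^{\star,\ell} = (q^{\star,\ell}_1, \dots, q^{\star,\ell}_k) \in \bigl( \B{L}^1_{+,1}(\nu) \cap \B{L}^2(\nu) \bigr)^k$, whence
\[
\B{P}_X\bigl( \C{K}(q^{\star,\ell}_{\ell(X)}, p_X) \bigr) \geq \B{P}_X\Bigl( \min_{j \in \dlb 1, k \drb} \C{K}(q^{\star,\ell}_j, p_X) \Bigr) \geq \inf\Bigl\{ \B{P}_X\Bigl( \min_{j \in \dlb 1, k \drb} \C{K}(q_j, p_X) \Bigr) \, : \, q \in \bigl( \B{L}^1_{+,1}(\nu) \cap \B{L}^2(\nu) \bigr)^k \Bigr\},
\]
while for $\ell$ with infinite criterion the leftmost member is $+ \infty$ and this chain holds trivially; taking the infimum over all $\ell$ closes the argument.

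I expect the only delicate issues to be routine integrability and measurability bookkeeping — the well-posedness of the conditional expectation of $\log p_X$, the treatment of empty clusters, the strict positivity of the $Z_j$ when the criterion is finite — whereas the analytic heart is simply the two successive uses of Jensen's inequality, which transfer the finiteness of the second moment of $p_X$ to the finiteness of the squared $\B{L}^2(\nu)$ norm of the geometric-mean centers.
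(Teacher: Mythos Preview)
Your proof is correct and follows essentially the same route as the paper: Jensen's inequality combined with Tonelli to bound $\int (q^{\star,\ell}_j)^2 \, \ud \nu$ by $Z_j^{-2}\,\B{P}_X(\ell(X)=j)^{-1}\,\B{P}_X\bigl(\int p_X^2\,\ud\nu\bigr)$, then Proposition~\ref{prop:1.5} to pass through the classification-function formulation and restrict the infimum. The only cosmetic difference is that the paper applies Jensen once to $\exp\bigl\{\B{P}_{X\,|\,\ell(X)=j}[\log p_X^2]\bigr\}\leq \B{P}_{X\,|\,\ell(X)=j}[p_X^2]$ directly, whereas you split this into two steps (Jensen for $\exp$, then for $t\mapsto t^2$), arriving at the identical bound.
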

\begin{proof}
Apply Jensen's inequality and the Fubini-Tonelli theorem to
obtain that $q^{\star,\ell}_{j} \in \B{L}^2(\nu)$.
Indeed, for any $j \in \dlb 1, k \drb$, if $Z_j = 0$, we can pick up 
any value for $q^{\star,\ell}_j$, and in paticular a value 
in $\B{L}^2(\nu)$, in the same way if $
\B{P}_{X}( \ell(X) = j) = 0$, we can make 
an arbitrary choice for $q^{\star, \ell}_j$, otherwise, $Z_j > 0$,  
and
\begin{multline*}
\int (q^{\star,\ell}_{j})^2 \, \ud \nu = Z_{j}^{-2} \int \exp \biggl\{ 2 
\B{P}_{X \, | \, \ell(X) = j}  \bigl[ \log ( p_X ) 
\bigr] \biggr\} \, \ud \nu \\ \leq Z_{j}^{-2} \; \B{P}_{X \, | \, \ell(X) 
= j}  \biggl( 
\int p_X^2 \, \ud \nu 	\biggr) 
\leq Z_j^{-2} \B{P}_{X} \bigl( \ell(X) = j \bigr)^{-1} 
\B{P}_{X} \biggl( \int p_X^2 \, \ud \nu \biggr) < \infty 
\end{multline*}
Then according to Proposition \vref{prop:1.5} 
\begin{multline*}
\B{P}_X \Bigl[ \min_{j \in \dlb 1, k \drb} \C{K} \bigl( q_j, p_X \bigr) \Bigr] 
= \inf_{\ell:\C{X}\mapsto \dlb 1, k \drb} \B{P}_X \Bigl[ \C{K} \bigl( q_{\ell(X)}, p_X \bigr) \Bigr] 
\\ \geq \inf_{\ell:\C{X}\mapsto \dlb 1, k \drb}\B{P}_X 
\Bigl[ \C{K} \bigl( q^{\star,
\ell}_{\ell(X)}, p_X \bigr) \Bigr] \geq 
\inf_{\ell:\C{X}\mapsto \dlb 1, k \drb}
\B{P}_X \Bigl[ \min_{j \in \dlb 1, k \drb} \C{K} \bigl( q^{\star,\ell}_j, 
p_X \bigr) \Bigr],
\end{multline*}
showing that we can restrict the optimization to $q_j \in \B{L}^2(\nu)$.
\end{proof}

\section{PAC-Bayesian generalization bounds for the
linear $k$-means criterion} 

In this section, we derive non asymptotic generalization 
bounds for the linear $k$-means criterion defined hereafter. 

\begin{dfn}
\label{def:linear}
Given a random vector $W$ in a separable Hilbert space $H$ and 
a bounded measurable set of parameters $\Theta \subset H^k$,
the $k$-means linear criterion is defined as
\[ 
\B{P}_W \Bigl( \min_{j \in \dlb 1, k \drb} \langle \theta_j, W \rangle 
\Bigr), \qquad \theta \in \Theta.
\]  
If $W_1, \dots, W_n$ are $n$ independent copies of $W$,
the empirical linear $k$-means criterion is defined by taking 
the expectation with respect to the empirical measure
$\oB{P}_W = \frac{1}{n} \sum_{i = 1}^n \delta_{W_i}$ instead of 
integrating with respect to $\B{P}_W$.
\end{dfn}
Using a change of representation based on the kernel trick, 
we will show that all the criteria we defined so far
can be rewritten as linear $k$-means criteria in suitable
spaces of coordinates.

Consequently, our approach will be to prove a generalization 
bound for the linear $k$-means criterion and to study its 
consequences for the other criteria. 

To reach a $\C{O} \bigl( \sqrt{k / n} \bigr)$ speed up to 
logarighmic factors, we will
borrow ideas from the classical chaining method used to upper bound the expected supremum of Gaussian processes (see \cite{boucheron}).
However, we will 
transpose the idea of chaining into the setting of 
PAC-Bayesian deviation inequalities.
To obtain dimension free bounds, we will use a sequence 
of perturbations of the parameter by isonormal processes
with a variance parameter ranging in a geometric grid.
This multiscale perturbation scheme will play the same 
role as the $\delta$-nets in classical chaining.

Let us begin with an existence result.
\begin{prop}
\label{prop:exists}
In the setting of Definition \vref{def:linear}, let us assume 
that $\lVert W \rVert_{\infty} = \ess \sup_{\B{P}_W} \lVert W \rVert < 
+ \infty$. There is $\theta^* \in \ov{\Theta}$, the weak closure of 
$\Theta$, such that
\[
\B{P}_W \Bigl( \min_{j \in \dlb 1, k \drb} \langle \theta^*_j, W \rangle 
\Bigr) = \inf_{\theta \in \Theta} \Bigl[  
\B{P}_W \Bigl( \min_{j \in \dlb 1, k \drb} \langle \theta_j, W \rangle 
\Bigr) \Bigr]. 
\]
Moreover $\lVert \theta^* \rVert \leq \lVert \Theta \rVert 
\overset{\text{\rm def}}{=} \sup_{\theta \in \Theta} \lVert \theta \rVert$.
\end{prop}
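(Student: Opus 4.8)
The plan is to run the standard ``bounded sets are weakly compact'' argument in the Hilbert space $H$, exactly along the lines of the existence proof for minimal quantizers: strong compactness fails in infinite dimension, so one passes to the weak topology, uses separability to reason with sequences, and uses the bound on $\lVert W \rVert_{\infty}$ to push a pointwise limit through an expectation by dominated convergence.

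First I would set up the topological frame. Since $\Theta$ is bounded, $\Theta \subseteq \C{B}_*^k$, where $\C{B}_* = \bigl\{ x \in H \, : \, \lVert x \rVert \leq \lVert \Theta \rVert \bigr\}$ and $\lVert \Theta \rVert = \sup_{\theta \in \Theta} \lVert \theta \rVert$. By reflexivity of $H$ (Banach--Alaoglu), $\C{B}_*$ is compact for the weak topology, and since $H$ is separable this weak topology restricted to $\C{B}_*$ is metrizable; hence $\C{B}_*^k$ with the product weak topology is a compact metrizable space, so closures coincide with sequential closures and it suffices to argue with sequences. As $\C{B}_*$ is convex and norm-closed it is weakly closed, whence the weak closure satisfies $\ov{\Theta} \subseteq \C{B}_*^k$; in particular $\ov{\Theta}$ is weakly compact and $\lVert \theta^*_j \rVert \leq \lVert \Theta \rVert$ for every $\theta^* \in \ov{\Theta}$ and every $j \in \dlb 1, k \drb$, which already gives the asserted norm bound $\lVert \theta^* \rVert \leq \lVert \Theta \rVert$.

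Next I would show that
\[
F(\theta) = \B{P}_W \Bigl( \min_{j \in \dlb 1, k \drb} \langle \theta_j, W \rangle \Bigr)
\]
is sequentially continuous on $\C{B}_*^k$ for the weak topology. If $\theta^{(n)} \to \theta$ weakly in $\C{B}_*^k$, then for $\B{P}_W$-almost every $W$ one has $\langle \theta^{(n)}_j, W \rangle \to \langle \theta_j, W \rangle$ for each $j$, hence $\min_{j \in \dlb 1, k \drb} \langle \theta^{(n)}_j, W \rangle \to \min_{j \in \dlb 1, k \drb} \langle \theta_j, W \rangle$ since a finite minimum is a continuous function of its arguments. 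Moreover, by Cauchy--Schwarz,
\[
\Bigl\lvert \min_{j \in \dlb 1, k \drb} \langle \theta^{(n)}_j, W \rangle \Bigr\rvert \leq \lVert \Theta \rVert \, \lVert W \rVert \leq \lVert \Theta \rVert \, \lVert W \rVert_{\infty},
\]
and the right-hand side is a constant, hence $\B{P}_W$-integrable \emph{precisely because} $\lVert W \rVert_{\infty} < + \infty$. Dominated convergence then gives $F(\theta^{(n)}) \to F(\theta)$.

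Finally, $F$ being weakly continuous, $\inf_{\ov{\Theta}} F = \inf_{\Theta} F$, and a continuous function on the weakly compact set $\ov{\Theta}$ attains its infimum, so there is $\theta^* \in \ov{\Theta}$ with $F(\theta^*) = \inf_{\theta \in \Theta} F(\theta)$; the norm bound was already recorded. I expect the only genuinely delicate point to be the handling of the weak topology: since the norm-closed bounded set $\Theta$ need not be strongly compact in infinite dimension, one must move to the weak topology and invoke separability in order to be entitled to the sequential (pointwise-limit plus dominated-convergence) reasoning above. Everything else is routine.
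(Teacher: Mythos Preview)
Your approach is essentially the paper's own: weak compactness of bounded sets in the separable Hilbert space $H^k$, pointwise weak continuity of $\theta \mapsto \min_j \langle \theta_j, w \rangle$, and dominated convergence using the bound $\lVert \Theta \rVert \, \lVert W \rVert_{\infty}$. Your extra care in invoking metrizability of the weak topology on bounded sets to justify sequential reasoning is a welcome precision that the paper leaves implicit.

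There is one small slip. You deduce from $\ov{\Theta} \subseteq \C{B}_*^k$ that $\lVert \theta^*_j \rVert \leq \lVert \Theta \rVert$ for each $j$, and then assert that this ``already gives'' $\lVert \theta^* \rVert \leq \lVert \Theta \rVert$. It does not: the norm on $H^k$ is $\lVert \theta \rVert = \bigl( \sum_{j=1}^k \lVert \theta_j \rVert^2 \bigr)^{1/2}$, so componentwise bounds by $\lVert \Theta \rVert$ only yield $\lVert \theta^* \rVert \leq \sqrt{k}\, \lVert \Theta \rVert$. The fix is immediate and uses exactly the device you already applied to $\C{B}_*$: the closed ball of radius $\lVert \Theta \rVert$ in $H^k$ is convex and norm-closed, hence weakly closed, and it contains $\Theta$, so it contains $\ov{\Theta}$. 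Equivalently (and this is how the paper argues), the norm on $H^k$ is weakly lower semicontinuous as a supremum of weakly continuous linear functionals, whence $\lVert \theta^* \rVert \leq \liminf_n \lVert \theta^{(n)} \rVert \leq \lVert \Theta \rVert$ along any approximating sequence.
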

\begin{proof}
This is inspired by the proof of Theorem 3.2 in \cite{Fischer}.
Let us begin with the second statement. Since 
\[
\theta \longmapsto \lVert 
\theta \rVert = \sup_{\theta' \in H^k, \, \lVert \theta' \rVert = 1} 
\langle \theta', \theta\rangle
\]
is weakly lower semicontiuous, 
$\lVert \ov{\Theta} \rVert \leq \lVert \Theta \rVert$, so that in 
particular $\lVert \theta^* \rVert \leq \lVert \Theta \rVert$.
Moreover, for any $w \in H$, 
\begin{align*}
H^k & \longrightarrow \B{R}\\
\theta & \longmapsto \min_{j \in \dlb 1, k \drb}
\langle \theta_j, w \rangle 
\end{align*}
is weakly continuous, since, by definition of the weak topology of $H^k$, 
$\theta \mapsto \langle \theta_j, w \rangle$ are weakly continuous, and taking a finite 
minimum is a continuous operation.

Let $(\theta_n)_{n \in \B{N}}$ be a bounded 
sequence in $H^k$, converging weakly to $\theta$. By the dominated convergence theorem
\begin{align*}
 \lim_{n \rightarrow \infty}
\B{P}_W \Bigl( \, \min_{j \in \dlb 1, k \drb} \langle \theta_{n, \, j}, W \rangle \Bigr)
&=  \B{P}_W \Bigl(\lim_{n \rightarrow \infty} \, \min_{j \in \dlb 1, k \drb} 
\langle \theta_{n,j}, W \rangle \Bigr)\\
&= \B{P}_W\Bigl( \, \min_{j \in \dlb 1, k \drb} \langle \theta_j, 
W \rangle \Bigr),
\end{align*}
since $\ds \bigl\lvert \min_{j \in \dlb 1, k \drb} \langle \theta_{n,j}, W \rangle 
\bigr\rvert
\leq \rVert \theta_n \rVert \, \lVert W \rVert_{\infty}$. 
Thus
\[
\C{R} : \theta \longmapsto \B{P}_W \bigl( \min_{j \in \dlb 1, k \drb} \langle 
\theta_j, W \rangle \bigr)
\]
is weakly continuous on $\ov{\Theta}$. But the unit ball, and therefore
any ball of $H^k$, is weakly compact, so that $\ov{\Theta}$ being weakly
closed and bounded is also weakly compact. Consequently, $\C{R}$ 
reaches its minimum on $\ov{\Theta}$ at some (non necessarily unique)
point $\theta^* \in \ov{\Theta}$. Therefore 
\[
\C{R}(\theta^*) = \inf_{\theta \in \ov{\Theta}} \C{R}(\theta)
= \inf_{\theta \in \Theta} \C{R}( \theta),
\]
the last equality being due to the fact that $\C{R}$ is weakly 
continuous.

Note that we used the weak topology, since the
unit ball of $H^k$ is not strongly compact when the dimension 
of $H$ is infinite.
\end{proof}

We will prove generalization bounds based on the following PAC-Bayesian lemma. 
We will use it as a workhorse to produce all the deviation
inequalities necessary to achieve 
our goals. Combined with Jensen's inequality, it will also 
produce bounds in expectation.  

\begin{lemma}
\label{lem:1.9}
Consider two measurable spaces $\C{T}$ and $\C{W}$,
a prior probability measure 
$\pi \in \C{M}_+^1 (\C{T})$ defined on $\C{T}$, 
and a measurable function $h : \C{T} \times \C{W}  
\rightarrow \B{R}$. Let $W \in \C{W}$ be a random 
variable and let $(W_1, \dots, W_n)$ be a sample 
made of $n$ independent copies of $W$. 
Let $\lambda$ be a positive real parameter.
\begin{multline}
\label{eq:PAC1}
\B{P}_{W_1, \, \dots, \, W_n} \Biggl\{ \exp \Biggl[ 
\sup_{\rho \in \C{M}_+^1 ( \C{T} )} 
\sup_{\eta \in \B{N}} \biggl\{ \; \int \min \Bigl\{ \eta,  \; - 
\lambda \sum_{i=1}^n h(\theta', W_i) 
\\ - n \log \Bigl[ \B{P}_W \exp \bigl[ - \lambda h(\theta', W) \bigr] \Bigr] 
\Bigr\} \, \ud \rho (\theta') - \C{K}(\rho, \pi) \biggr\} \Biggr] \Biggr\} \leq 1.  
\end{multline}
Consequently, for any $\delta \in ]0,1[$, 
with probability at least $1 - \delta$, 
\begin{multline}
\label{eq:PAC2}
\sup_{\rho \in \C{M}_+^1 ( \C{T})} 
\sup_{\eta \in \B{N}} \biggl\{ \; \int \min \Bigl\{ \eta,  \; - 
\lambda \sum_{i=1}^n h(\theta', W_i) 
\\ - n \log \Bigl[ \B{P}_W \exp \bigl[ - \lambda h(\theta', W) \bigr] \Bigr] 
\Bigr\} \, \ud \rho (\theta') - \C{K}(\rho, \pi) \biggr\} \leq \log(\delta^{-1}).  
\end{multline}
\end{lemma}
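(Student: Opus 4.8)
The plan is to derive inequality \eqref{eq:PAC1} from a one-line Fubini computation, after eliminating the supremum over the posterior $\rho$ by means of the Donsker--Varadhan (Gibbs) variational formula, and then to obtain \eqref{eq:PAC2} from Markov's inequality.

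First I would fix the sample and introduce, for $\eta \in \B{N}$, the function
\[
g_\eta(\theta') = \min \Bigl\{ \eta, \; - \lambda {\ts\sum_{i=1}^n} h(\theta', W_i) - n \log \bigl[ \B{P}_W \exp(- \lambda h(\theta', W)) \bigr] \Bigr\},
\]
which is measurable in $\theta'$ (measurability of $\theta' \mapsto \B{P}_W \exp(-\lambda h(\theta', W))$ coming from the Tonelli theorem, strict positivity from the fact that $h$ is real valued) and bounded above by $\eta$. Since $x \mapsto e^x$ is increasing and continuous, the exponential in \eqref{eq:PAC1} commutes with the supremum over $\eta$ and with the supremum over $\rho$, so the left hand side of \eqref{eq:PAC1} equals $\B{P}_{W_1, \dots, W_n} \bigl[ \sup_{\eta \in \B{N}} \exp \{ \sup_\rho ( \int g_\eta \, \ud \rho - \C{K}(\rho, \pi) ) \} \bigr]$. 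Applying Lemma \vref{lem:1.3} to the function $- g_\eta$, which is bounded below so that the integer truncation appearing there is inactive, gives $\sup_\rho ( \int g_\eta \, \ud \rho - \C{K}(\rho, \pi) ) = \log \int \exp(g_\eta) \, \ud \pi$.

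Next I would let $\eta \to \infty$. The functions $g_\eta$ increase towards $\psi(\theta') = - \lambda \sum_i h(\theta', W_i) - n \log [ \B{P}_W \exp(- \lambda h(\theta', W)) ] \in [- \infty, + \infty[$, so by monotone convergence $\sup_\eta \int \exp(g_\eta) \, \ud \pi = \int \exp(\psi) \, \ud \pi$. Taking the $\B{P}_{W_1, \dots, W_n}$ expectation and exchanging it with $\int \cdot \, \ud \pi$ by the Tonelli theorem, it remains to compute $\B{P}_{W_1, \dots, W_n}[\exp(\psi(\theta'))]$ for each fixed $\theta'$: the factor $( \B{P}_W \exp(- \lambda h(\theta', W)) )^{-n}$ is deterministic and factors out, while by independence of $W_1, \dots, W_n$ one has $\B{P}_{W_1, \dots, W_n}[\exp(- \lambda \sum_i h(\theta', W_i))] = ( \B{P}_W \exp(- \lambda h(\theta', W)) )^n$; the product is therefore $1$ when $\B{P}_W \exp(- \lambda h(\theta', W)) < + \infty$ and $0$ otherwise (then $\psi(\theta') = - \infty$). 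Integrating in $\theta'$ against $\pi$ yields a number at most $1$, which is \eqref{eq:PAC1}. Inequality \eqref{eq:PAC2} then follows by applying Markov's inequality to the nonnegative random variable $\exp[ \sup_\rho \sup_\eta \{\dots\} ]$, whose expectation is at most $1$: it exceeds $\delta^{-1}$ with probability at most $\delta$, hence with probability at least $1 - \delta$ the supremum is at most $\log(\delta^{-1})$.

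The main difficulty is not any single estimate but the measurability and integrability bookkeeping: one must check that the inner suprema over $\rho$ and $\eta$ produce measurable functions of the sample, so that taking expectations is legitimate, and one must treat the degenerate values $0$ and $+ \infty$ of $\B{P}_W \exp(- \lambda h(\theta', W))$. The integer truncation by $\eta$ is exactly the device that keeps every integral well defined in $\B{R} \cup \{+ \infty\}$; and because $h$ is real valued the quantity $\B{P}_W \exp(- \lambda h(\theta', W))$ is automatically strictly positive, so in fact only the value $+ \infty$ requires attention and it contributes nothing to the final integral.
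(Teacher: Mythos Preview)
Your proof is correct and follows essentially the same skeleton as the paper's: reduce the supremum over $\rho$ to the quantity $Y=\log\int\exp(g_\eta)\,\ud\pi$, let $\eta\to\infty$ by monotone convergence, swap the sample expectation with $\ud\pi$ by Tonelli, use independence to make the $\theta'$-integrand equal to $1$ (or $0$ where the moment generating function is infinite), and finish with Markov's inequality.

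The one genuine difference is how the reduction to $Y$ is carried out. You invoke the Donsker--Varadhan identity of Lemma~\ref{lem:1.3} to obtain the \emph{equality} $\sup_\rho\bigl(\int g_\eta\,\ud\rho-\C{K}(\rho,\pi)\bigr)=\log\int\exp(g_\eta)\,\ud\pi$; this makes the supremum automatically measurable in the sample and disposes of the measurability issue cleanly. The paper instead uses Jensen's inequality (moving $\exp$ inside $\int\cdot\,\ud\rho$, then changing to $\ud\pi$ via the density) to obtain only the \emph{inequality} $Y'\le Y$, and explicitly remarks that the raw supremum $Y'$ need not be measurable, but is dominated by the measurable $Y$. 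Your route is slightly slicker on the bookkeeping side at the cost of quoting Lemma~\ref{lem:1.3}; the paper's route is more self-contained but has to carry the measurability caveat through to the end.
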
 
Note that the role of $\eta$ in this formula is to give a meaning to 
the integration with respect to $\rho$ in all circumstances.

\begin{proof}
We follow here the same arguments as in the proof of Proposition 1.7 in \cite{giulini2015}.
Remark that 
the supremum in $\rho$ can be restricted to the case when 
$\C{K}(\rho, \pi) < \infty$, 
and recall that in this case $\rho \ll \pi$ and 
$\ds \C{K}(\rho, \pi) = \int \log \biggl( \frac{\ud \rho}{\ud \pi} (\theta') 
\biggr) \, \ud \rho ( \theta')$. Note also that
\[
\int \B{1} \Bigl( \frac{\ud \rho}{\ud \pi} (\theta') > 0 \Bigr) \, 
\ud \rho (\theta') = 
\int \B{1} \Bigl( \frac{\ud \rho}{\ud \pi} (\theta') > 0 \Bigr) 
\frac{\ud \rho}{\ud \pi}(\theta') \, 
\ud \pi (\theta') 
= \int 
\frac{\ud \rho}{\ud \pi}(\theta') \, 
\ud \pi (\theta') = 1.
\]
Applying Jensen's inequality, we get 
\begin{multline*}
\exp \Biggl\{ 
\sup_{\rho \in \C{M}_+^1(\C{T}) } 
\sup_{\eta \in \B{N}} \int \min \biggl\{ \eta,  \; - 
\lambda \sum_{i=1}^n h(\theta', W_i) 
\\ \shoveright{ - n \log \Bigl[ \B{P}_W \exp \bigl[ - \lambda h(\theta', W) \bigr] \Bigr] 
\biggr\} \, \ud \rho (\theta') - \C{K}(\rho, \pi) 
\Biggr\} }\\ \shoveleft{ \leq   \sup_{\eta \in \B{N}}  \sup_{
\substack{\rho \in \C{M}_+^1(\C{T})
\\ \C{K}(\rho, \pi) < \infty}} 
\int \exp \Biggl\{ 
\min \biggl\{ \eta,  \; - 
\lambda \sum_{i=1}^n h(\theta', W_i) }
\\ \shoveright{ - n \log \Bigl[ \B{P}_W \exp \bigl[ - \lambda h(\theta', W) \bigr] \Bigr] 
\biggr\}  
\Biggr\} \frac{\ud \rho}{\ud \pi} (\theta')^{-1} \, \ud \rho (\theta') }
\\ 
\shoveleft{= \sup_{\eta \in \B{N}}  \sup_{\substack{
\rho \in \C{M}_+^1(\C{T})\\ \C{K}(\rho, \pi) < \infty}} 
\int \exp \Biggl\{ 
\min \biggl\{ \eta,} \\ \shoveright{ - 
\lambda \sum_{i=1}^n h(\theta', W_i) 
- n \log \Bigl[ \B{P}_W \exp \bigl[ - \lambda h(\theta', W) \bigr] \Bigr] 
\biggr\}  
\Biggr\} \B{1} \biggl( \frac{\ud \rho}{\ud \pi} (\theta') > 0 \biggr) \, \ud 
\pi (\theta')} 
\\ 
\shoveleft{\leq 
\sup_{\eta \in \B{N}}   
\int \exp \Biggl\{ 
\min \biggl\{ \eta,  \; - 
\lambda \sum_{i=1}^n h(\theta', W_i) 
- n \log \Bigl[ \B{P}_W \exp \bigl[ - \lambda h(\theta', W) \bigr] \Bigr] 
\biggr\}  \, \ud \pi (\theta') }
\\ 
\underset{\substack{\text{monotone}\\\text{convergence}}}{=}
\int \exp \Biggl\{ 
- \lambda \sum_{i=1}^n h(\theta', W_i) 
- n \log \Bigl[ \B{P}_W \exp \bigl[ - \lambda h(\theta', W) \bigr] \Bigr] 
\Biggr\} \, \ud \pi(\theta').  
\end{multline*} 
Let us put 
\begin{multline*}
Y' = 
\sup_{\rho \in \C{M}_+^1 (\C{T})} 
\sup_{\eta \in \B{N}} \biggl\{ \; \int \min \Bigl\{ \eta,  \; - 
\lambda \sum_{i=1}^n h(\theta', W_i) 
\\ \shoveright{ - n \log \Bigl[ \B{P}_W \exp \bigl[ - \lambda 
h(\theta', W) \bigr] \Bigr] 
\Bigr\} \, \ud \rho (\theta') - \C{K}(\rho, \pi) \biggr\} \text{ and }} \\ 
\shoveleft{Y = 
\log \int \exp \Biggl\{ 
- \lambda \sum_{i=1}^n h(\theta', W_i) 
- n \log \Bigl[ \B{P}_W \exp \bigl[ - \lambda h(\theta', W) \bigr] \Bigr] 
\Biggr\} \, \ud \pi(\theta').\hfill}
\end{multline*}
We just proved that $Y' \leq Y$. Moreover, $Y$ is measurable, 
according to Fubini's theorem for non-negative functions. 
Therefore $Y$ is a random variable. Note that we did not prove 
that $Y'$ itself is measurable. 
Remark now that
\begin{multline*}
\B{P}_{W_1, \, \dots \, ,W_n} \bigl[ \exp (Y) \bigr] \\ =  
\B{P}_{W_1, \, \dots \, ,W_n}  \int \exp \Biggl\{ 
- \lambda \sum_{i=1}^n h(\theta', W_i) 
- n \log \Bigl[ \B{P}_W \exp \bigl[ - \lambda h(\theta', W) \bigr] \Bigr] 
\Biggr\} \, \ud \pi(\theta'),  
\\ \underset{\text{Fubini}}{=} \int \B{P}_{W_1, \, \dots \, ,W_n} \exp \Biggl\{ 
- \lambda \sum_{i=1}^n h(\theta', W_i) 
- n \log \Bigl[ \B{P}_W \exp \bigl[ - \lambda h(\theta', W) \bigr] \Bigr] 
\Biggr\} \, \ud \pi(\theta') \\
= \scalebox{1.2}[1.5]{$\ds\int$} \left( \B{1} \biggl( 
\B{P}_W \Bigl[ \exp \Bigl( - \lambda h(\theta', W) \Bigr) 
\Bigr] < + \infty \biggr)
\prod_{i=1}^n \frac{\B{P}_{W_i} \bigl[ \exp \bigl( - \lambda 
h(\theta', W_i ) \bigr) \bigr]}{
\B{P}_{W} \Bigl[ \exp \Bigl( - \lambda h(\theta', W ) \bigr) \bigr]} 
\right) 
\ud \pi(\theta')
\leq 1,  
\end{multline*} 
proving the first part of the lemma.
From Markov's inequality, 
\[
\B{P} \bigl( Y \geq \log(\delta^{-1}) \bigr) \leq 
\delta \, \B{P}_{W_1, \, \dots \, ,W_n} \bigl[ \exp(Y) \bigr] \leq \delta. 
\]
Consequently $\B{P} \bigl( Y \leq \log(\delta^{-1}) \bigr) \geq 1 - \delta$. 
We have proved that the non necessarily measurable event $Y' \leq 
\log(\delta^{-1})$ 
contains the measurable event $Y \leq \log(\delta^{-1})$ whose 
probability is at least $1 - \delta$. 
\end{proof}

We are now ready to state and prove our generalization bounds for the linear 
$k$-means criterion.

\begin{lemma}
\label{lem:14.1}
Let $W$ be a random vector in a separable Hilbert space $H$.
Let $(W_1, \dots, W_n)$ be a sample made of $n$ independent 
copies of $W$. Let $\Theta \subset H^k$ be a bounded 
measurable set of parameters. Define 
\[
\lVert \Theta \rVert = \sup \biggl\{ 
\biggl( \sum_{j=1}^k \lVert \theta_j \rVert^2 
\biggr)^{1/2} \, : \, \theta \in \Theta \biggr\} < \infty
\]
and assume that, for some real valued parameters $a$ and $b$, 
\[
\B{P}_W \Bigl( \min_{j \in \dlb 1, k \drb} \langle \theta_j, 
W \rangle \in [a,b] \text{ for any } \theta \in \Theta \Bigr) = 1.
\] 
Assume also that $\ds \lVert W \rVert_{\infty} \overset{\text{\rm def}}{=} 
\ess \sup_{\B{P}_W} 
\, \lVert W \rVert < \infty$.

Our first result gives an observable upper bound for the
$k$-means criterion, provided that the above parameters are 
known or upper bounded by known quantities.

For any $k \geq 2$, any $n \geq 2 k$ and any $\delta \in ]0,1[$, with 
probability at least $1 - \delta$, for any $\theta 
\in \Theta$,
\begin{multline*}
\B{P}_W \Bigl( \min_{j \in \dlb 1, k \drb} 
\langle \theta_j, W \rangle \Bigr) \leq 
\oB{P}_W \Bigl( 
\min_{j \in \dlb 1, k \drb} \langle \theta_j, W \rangle \Bigr) \\ + 
\Biggl( \frac{\log(n/k)}{\log(2)} \sqrt{\frac{8 \log(k)}{n}} + 
2 \sqrt{\frac{\log(k)}{n}} \; \Biggr)
\lVert \Theta \rVert \lVert W \rVert_{\infty}  
\\ + 
\sqrt{\frac{(\sqrt{2} + 1 )\Bigl( k (b-a)^2 + 2 \log(ek) 
\lVert W \rVert_{\infty}^2 \lVert \Theta \rVert^2 
\Bigr) }{n}} + \sqrt{\frac{\log(\delta^{-1})}{2n}}(b-a),
\end{multline*}
where $\ds \oB{P}_W = \frac{1}{n} \sum_{i=1}^n \delta_{W_i}$
is the empirical measure.

Our second result deals with the excess risk with respect
to a non random reference parameter $\theta^* \in \Theta$.

If $\theta^* \in \Theta$ is a non random value of the parameter,
with probability at least $1 - \delta$, 
for any $\theta \in \Theta$, 
\begin{multline*}
\Bigl( \B{P}_W - \oB{P}_W \Bigr) \Bigl( 
\min_{j \in \dlb 1, k \drb} \langle \theta_j, W \rangle -
\min_{j \in \dlb 1, k \drb} \langle \theta_j^*, W \rangle \Bigr) 
\\ \leq
\Biggl( \frac{\log(n/k)}{\log(2)} \sqrt{\frac{8 \log(k)}{n}} + 
2 \sqrt{\frac{\log(k)}{n}} \; \Biggr)
\lVert \Theta \rVert \lVert W \rVert_{\infty}  
\\ 
+  \sqrt{\frac{(\sqrt{2} + 1 )\Bigl( k (b-a)^2 + 2 \log(ek) 
\lVert W \rVert_{\infty}^2 \lVert \Theta \rVert^2 
\Bigr)}{n}}
+ \sqrt{\frac{2 \log(\delta^{-1})}{n}} (b-a).
\end{multline*}
Our third result draws the consequences of this excess 
risk bound for an $\epsilon$-minimizer
$\wh{\theta}$.

In the case when the estimator $\wh{\theta}(W_1, \dots, W_n)
\in \Theta$
is such that $\B{P}_{W_1, \dots, \, W_n}$ almost surely
\[
\oB{P} \Bigl( \min_{j \in \dlb 1, k \drb} \langle \wh{\theta}_j, 
W \rangle \Bigr)
\leq \inf_{\theta \in \Theta} \oB{P} \Bigl( \min_{j \in \dlb 1, k \drb} \langle \theta_j, 
W \rangle \Bigr) + \epsilon,
\]
$\ds \B{P}_W 
\Bigl( \min_{j \in \dlb 1, k \drb} \langle \wh{\theta}_j, W
\rangle \Bigr) - \inf_{\theta \in \Theta} \B{P}_W 
\Bigl( \min_{j \in \dlb 1, k \drb} \langle \theta_j, W
\rangle \Bigr) - \epsilon$ satisfies the same bound with at least the same 
probability.\\[1ex] 
Moreover, the expected excess risk satisfies 
\begin{multline*}
\B{P}_{W_1, \dots, W_n} \Bigl[ \B{P}_W \Bigl( \min_{j \in \dlb 1, k \drb} 
\langle \wh{\theta}_j, W \rangle \Bigr) - \inf_{\theta \in \Theta} \B{P}_W 
\Bigl( \min_{j \in \dlb 1, k\drb} \langle \theta_j, W \rangle 
\Bigr) \Bigr] \\ \leq \Biggl( \frac{\log(n/k)}{\log (2)} \sqrt{ \frac{
8 \log(k)}{n}} + 2 \sqrt{\frac{\log(k)}{n}} \; \Biggr) \lVert 
\Theta \rVert \, \lVert W \rVert_{\infty} 
\\ + \sqrt{ \frac{ \bigl( \sqrt{2} + 1 \bigr) \Bigl( 
k (b-a)^2 + 2 \log(ek) \lVert W \rVert_{\infty}^2 \lVert 
\Theta \rVert^2 \Bigr)}{n}} + \epsilon.
\end{multline*}
\end{lemma}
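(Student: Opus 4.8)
The plan is to derive all three statements from one uniform deviation inequality, namely a high-probability bound on $\sup_{\theta\in\Theta}\bigl(\B{P}_W-\oB{P}_W\bigr)\bigl(\min_{j\in\dlb 1, k \drb}\langle\theta_j,W\rangle\bigr)$ and on its centred (excess-risk) analogue, obtained from the PAC-Bayesian Lemma \ref{lem:1.9}. The first, observable bound is then that inequality together with the trivial fact that $\oB{P}_W\bigl(\min_j\langle\theta_j,W\rangle\bigr)$ is computable from the sample. The second bound is the same argument run with the criterion centred at the fixed reference $\theta^*$; the only change is that a difference of two minima has range at most $2(b-a)$ instead of $(b-a)$, which is exactly what turns the confidence term $\sqrt{\log(\delta^{-1})/(2n)}\,(b-a)$ into $\sqrt{2\log(\delta^{-1})/n}\,(b-a)$. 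The third bound follows by a standard comparison: for a deterministic near-minimizer $\theta_\epsilon\in\Theta$ of the true criterion, write the excess risk of $\wh\theta$ as the sum of the uniform excess-risk gap evaluated at $\wh\theta$, the empirical sub-optimality $\oB{P}_W(\min_j\langle\wh\theta_j,W\rangle)-\oB{P}_W(\min_j\langle\theta_{\epsilon,j},W\rangle)\le\epsilon$, and a one-point deviation at $\theta_\epsilon$ that is absorbed into the constants and then removed by letting $\theta_\epsilon$ approach a true minimizer. For the expectation bound I would not integrate the deviation inequality in $\delta$, but instead apply Jensen's inequality directly inside the exponential-moment inequality \eqref{eq:PAC1}, which reproduces the bound without the $\sqrt{\log(\delta^{-1})/(2n)}$ term.

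The substance is the uniform inequality, obtained by PAC-Bayesian chaining. I would first fix a geometric grid of perturbation scales $\beta_\ell=2^{-\ell}\beta_0$ for $\ell=0,\dots,m$, with $m$ of order $\log(n/k)/\log(2)$ and $\beta_0$ a free scale parameter tuned at the end, and work with an isonormal Gaussian sequence $G=(G_j)_{j\in\dlb 1, k \drb}$ built as in the construction of the isonormal process. The parameter space $\C{T}$ of Lemma \ref{lem:1.9} is taken to be the corresponding sequence space rather than $H^k$ itself, which is legitimate because the only way $\theta'$ enters the criterion is through $\langle\theta'_j,W\rangle=\langle\theta_j,W\rangle+\beta_\ell\langle G_j,W\rangle$, and $\langle G_j,W\rangle\sim\C{N}(0,\lVert W\rVert^2)$ is a well-defined Gaussian functional for the fixed vector $W\in H$. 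For a fixed $\theta$, the law $\rho_\ell^\theta$ of the $\beta_\ell$-perturbation of $\theta$ and the centred law $\pi_\ell$ of the $\beta_\ell$-perturbation of $0$ satisfy $\C{K}(\rho_\ell^\theta,\pi_\ell)=\sum_j\lVert\theta_j\rVert^2/(2\beta_\ell^2)\le\lVert\Theta\rVert^2/(2\beta_\ell^2)$, and, since each coordinate map $x\mapsto\min_j(a_j+x_j)$ is $1$-Lipschitz, the perturbed criterion differs from $\min_j\langle\theta_j,W\rangle$ by at most $\beta_\ell\max_j\lvert\langle G_j,W\rangle\rvert$, a variable with controlled Gaussian exponential moments because $\lVert W\rVert\le\lVert W\rVert_\infty$ almost surely. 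This last fact makes the finest-scale residual negligible once $\beta_m$ is small enough, which is what forces $m$ to be of order $\log(n/k)$; the hypotheses $k\ge2$ and $n\ge2k$ merely guarantee $\log(k)>0$ and $\log(n/k)>0$ so that at least one chaining level is used.

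The chaining itself telescopes the $\beta_m$-averaged criterion as the $\beta_0$-averaged criterion plus the sum over $\ell=1,\dots,m$ of the successive increments from scale $\beta_{\ell-1}$ to scale $\beta_\ell$ (coupling the two perturbations through $\beta_{\ell-1}G'\overset{d}{=}\beta_\ell G+\sqrt{\beta_{\ell-1}^2-\beta_\ell^2}\,G''$ with $G,G''$ independent). Each increment is bounded with Lemma \ref{lem:1.9}, applied with prior $\pi_\ell$ and posteriors ranging over $\rho_\ell^\theta$: the $\theta$-uniform cost at level $\ell$ is a relative-entropy term of size $\lVert\Theta\rVert^2/(\lambda\beta_\ell^2)$ together with a Hoeffding-type cumulant of size $\lambda n^{-1}$ times the squared range of the increment, the latter being of order $\beta_{\ell-1}^2\lVert W\rVert_\infty^2\log(k)$ --- the $\log(k)$ coming from the $k$ coordinates inside the minimum. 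Optimising $\lambda$ level by level makes every increment contribute a term of order $\sqrt{\log(k)/n}\,\lVert\Theta\rVert\lVert W\rVert_\infty$, \emph{independently of $\ell$}, because the growth of the relative-entropy cost from one scale to the next is exactly compensated by the shrinking of the range of the increment; summing the $m\asymp\log(n/k)/\log(2)$ increments produces the first displayed term, while the coarsest level $\ell=0$ (with $\beta_0$ tuned so that $\lVert\Theta\rVert^2/\beta_0^2\asymp\log k$) produces the $2\sqrt{\log(k)/n}\,\lVert\Theta\rVert\lVert W\rVert_\infty$ term.

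The remaining term $\sqrt{(\sqrt2+1)\bigl(k(b-a)^2+2\log(ek)\lVert W\rVert_\infty^2\lVert\Theta\rVert^2\bigr)/n}$ comes from the part of the perturbed criterion that is genuinely unbounded. Following \cite{catoni2012}, I would split $\min_j\langle\theta'_j,W\rangle$ through the influence function $\psi$ into a truncated, hence sub-Gaussian, contribution --- treated by the same Hoeffding/PAC-Bayesian mechanism and producing the $k(b-a)^2$ and $2\log(ek)\lVert W\rVert_\infty^2\lVert\Theta\rVert^2$ terms under the square root (the factor $k$ reflecting the $k$-dimensional minimum and the $\log(ek)$ a union bound over clusters) --- and an \emph{extreme values} remainder, whose expectation is controlled by the Gaussian tails of $\langle G_j,W\rangle$ under the boundedness of $W$, the constant $\sqrt2+1$ arising when the two contributions are recombined via Cauchy--Schwarz (this part is close in spirit to the mean-estimation arguments of \cite{CatGiu} and \cite{CatOlGiu}). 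Choosing $\beta_0$ and $m$ to balance the residual against the chain then completes the bound. I expect the main obstacle to be executing the PAC-Bayesian chaining rigorously: legitimising the telescoping over scales despite the measurability caveat noted after Lemma \ref{lem:1.9} and the infinitely many coordinates of the perturbation, and, above all, keeping track of the cumulant estimates carefully enough that the per-level contributions provably do not grow with $\ell$. It is precisely this scale-uniformity that upgrades the rate from $\C{O}(k/\sqrt n)$ to $\C{O}(\sqrt{k/n})$ up to logarithmic factors.
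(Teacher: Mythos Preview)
Your approach is essentially the paper's: PAC-Bayesian chaining with isonormal Gaussian perturbations, scale-uniform increment bounds summed over $\lfloor\log(n/k)/\log 2\rfloor$ levels, Catoni's influence function $\psi$ for one endpoint, and the bounded-difference inequality added once at the end for the deviation term. The expectation bound is indeed obtained by applying Jensen to the exponential moment \eqref{eq:PAC1} at every level, so the whole chain is first run in expectation and the $\sqrt{\log(\delta^{-1})}$ term appears only in the final bounded-difference step.

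A few attributions are reversed, though, and fixing them matters for getting the stated constants. The $2\sqrt{\log(k)/n}\,\lVert\Theta\rVert\lVert W\rVert_\infty$ term comes from the \emph{finest} scale --- the gap $(\delta_{\theta'|\theta}-\rho_{\theta'|\theta})f(\theta',W)$ between the unperturbed criterion and its smallest-variance smoothing, bounded almost surely by $2\sqrt{2\log(k)/\beta}\,\lVert W\rVert_\infty$ with $\beta=2n/\lVert\Theta\rVert^2$ --- not from the coarsest level; this residual is not negligible but exactly one chain-increment in size. Conversely, the $\sqrt{(\sqrt 2+1)\bigl(k(b-a)^2+2\log(ek)\lVert W\rVert_\infty^2\lVert\Theta\rVert^2\bigr)/n}$ term is the \emph{coarsest}-scale endpoint $\rho^{2^p}_{\theta'|\theta}f(\theta',W)$, and it is precisely here (not across the chain) that the $\psi$-splitting is used. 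The factor $k$ in $k(b-a)^2$ is not a union bound over clusters but the choice $2^{-p}\beta\le 4k/\lVert\Theta\rVert^2$ of the coarsest inverse-variance; and the constant $\sqrt 2+1$ is not Cauchy--Schwarz but the sum $1+\tfrac{1}{2(1+\sqrt 2)}=\tfrac{\sqrt 2+1}{2}$ of the two coefficients coming from $\psi(x)\le\log(1+x+x^2/2)$ and $\lvert x-\psi(x)\rvert\le x^2/\bigl(4(1+\sqrt 2)\bigr)$. For the third statement the paper invokes Proposition~\ref{prop:exists} to obtain an exact minimizer $\theta^*$ in the weak closure $\overline\Theta$ and applies the second statement there, rather than passing through a sequence of $\epsilon$-minimizers.
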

\begin{proof}
Assume without loss of generality that $H = \ell^2 \subset \B{R}^{\B{N}}$.
Let 
\[
\rho_{\theta' \, | \, \theta} = \B{P}_{\theta_i + 
\beta^{-1/2} \epsilon_i, i \in \B{N}}, \quad \theta \in \B{R}^{\B{N}},
\]
be a Gaussian conditional probability distribution with values in 
$\C{M}_+^1 \bigl( \B{R}^{\B{N}} \bigr)$, where $\epsilon_i$, $i \in \B{N}$ is an infinite 
sequence of independent standard normal random variables. 
When $\theta$ and $\theta' \in \B{R}^{k \times \B{N}}$ are
made of $k$ infinite sequences of real numbers, let 
\[
\rho_{\theta' \, | \, \theta} = \bigotimes_{j=1}^k \rho_{\theta_j' \, | \, 
\theta_j}
\] 
be the tensor product of the previously defined conditional probability 
distributions.
Let $W$ be a random vector in the separable Hilbert space $\ell^2 
\subset \B{R}^{\B{N}}$.
Consider the measurable functions
\[
f(\theta, w) = \min_{j \in \dlb 1, k \drb} \langle \theta_j, w \rangle, 
\qquad \theta \in \B{R}^{k \times \B{N}}, \, w \in \ell^2, 
\] 
where the scalar product is extended beyond $\ell^2$ 
as follows. For any $u, v \in \B{R}^{\B{N}}$, let us define 
$\langle u, v \rangle$ as 
\[
\langle u, v \rangle = 
\begin{cases}
\ds \lim_{s \rightarrow \infty} \sum_{t=0}^s u_t v_t, & 
\ds \text{when } \limsup_{s \rightarrow \infty} \sum_{t=0}^s u_t v_t = 
\liminf_{s \rightarrow \infty} \sum_{t=0}^s u_t v_t \in \B{R}.\\
0, & \text{otherwise.} 
\end{cases}
\] 
Remark that this extension is measurable, but not bilinear.

Our strategy will be to decompose the opposite of the 
centered empirical risk \linebreak $\bigl( \B{P}_W - \oB{P}_W \bigr) 
\bigl[ f(\theta, W) \bigr]$ into 
\begin{multline}
\label{eq:strat}
\bigl( \B{P}_W - \oB{P}_W \bigr) \bigl[ f(\theta, W) \bigr] = 
\bigl( \B{P}_W - \oB{P}_W \bigr) \bigl( \delta_{\theta' | \theta} 
- \underbrace{\rho_{\theta' \, | \, \theta}}_{\makebox[0pt]{
\scriptsize small 
perturbation}} \bigr) \bigl[ f(\theta', W) \bigr]\\
+ \sum_{q=1}^p \bigl( \B{P}_W - \oB{P}_W  \bigr) \bigl( \rho^{2^{q-1}}_{
\theta' | \theta} - \underbrace{
\rho^{2^q}_{\theta' \, | \, \theta}
}_{
\makebox[0pt]{\scriptsize chain of intermediate scales}} \bigr) \bigl[ 
f(\theta', W) \bigr] \\
+ \bigl( \B{P}_W - \oB{P}_W \bigr) \underbrace{\rho^{2^p}_{\theta' \, | \, 
\theta}}_{\makebox[0pt]{\scriptsize big perturbation}}  
\bigl[ f( \theta', W) \bigr],  
\end{multline} 
where $\delta_{\theta' \, | \, \theta}$ is the Dirac (or identity)
transition kernel and 
$\rho^{2^q}_{\theta' \, | \, \theta}$ is the transition kernel
$\rho_{\theta' \, | \, \theta}$ iterated $2^q$ times.

Let 
\[
\ov{f}(\theta, w) = f(\theta, w) - \B{P}_W \bigl( f(\theta, W) \bigr),
\qquad \theta \in \B{R}^{k \times \B{N}}, w \in \ell^2,
\]
be the centered loss function.

We will first apply the PAC-Bayesian inequalities of Lemma 
\vref{lem:1.9} to the function
\[
h(\theta', w) = \bigl( \delta_{\theta'' \, | \, \theta'} - 
\rho_{\theta'' \, | \, \theta'} \bigr) \bigl[ \, 
\ov{f} ( \theta'', w ) \bigr] 
= \ov{f} ( \theta', w ) - \rho_{\theta'' \, | \, \theta'} \bigl[ 
\, 
\ov{f} ( \theta'', w ) \bigr], \quad \theta' \in \B{R}^{k 
\times \B{N}}, \; w \in \ell^{2}
\]  
and to the reference measure $\pi_{\theta'} 
= \rho_{\theta' \, | \, \theta = 0}$.
\begin{lemma}
\label{lem:15}
The function $h$ satisfies 
\[
\bigl( \pi_{\theta'} \otimes \B{P}_W \bigr) \Bigl( 
\lvert h(\theta', W) \rvert \leq 2 \sqrt{2 \log(k) / \beta} \, 
\lVert W \rVert_{\infty} \Bigr) = 1,
\]
where $\ds \lVert W \rVert_{\infty} = \ess \sup_{\B{P}_W} \lVert W \rVert$.
\end{lemma}
\begin{proof}
Remark that for any $w \in \ell^2$, $\pi_{\theta'}$ almost surely, 
\begin{multline*}
\bigl( \delta_{\theta'' \, | \, \theta'} - \rho_{\theta'' \, | \, \theta'}
\bigr) \bigl[ f(\theta'', w) \bigr] = \rho_{\theta'' \, | \theta'} \Bigl( 
\min_{j} \langle \theta'_j, w \rangle - \min_j \langle \theta''_j, w 
\rangle \Bigr) \\ \leq \rho_{\theta'' \, | \, \theta'} \Bigl( \max_j 
\langle \theta''_j - \theta'_j , w \rangle \Bigr), 
\end{multline*}
since in this situation, the first case in the extended
definition of the scalar
product applies with probability one (according to Kolmogorov's 
three series theorem).
Considering that under $\rho_{\theta'' \, | \, \theta'}$, 
$\langle \theta''_j - \theta'_j, w \rangle,
\, j \in \dlb 1, k \drb$ are $k$ independent centered 
real normal random variables with variance $\lVert w \rVert^2 / \beta$  
and applying a classical maximal inequality for the expectation
of the maximum of $k$ standard normal variables 
(see section 2.5 in \cite{boucheron}), we get that
\[
\bigl( \delta_{\theta'' \, | \, \theta'} - \rho_{\theta'' \, | \, 
\theta'} \bigr) \bigl[ f( \theta', w ) \bigr] \leq \sqrt{2 \log(k) / \beta} \, 
\lVert w \rVert.
\]
Reasoning in a similar way for the opposite, we get
\[
- \bigl( \delta_{\theta'' \, | \, \theta'} - \rho_{\theta'' \, | \, 
\theta'} \bigr) \bigl[ f( \theta', w ) \bigr] \leq 
\rho_{\theta'' \, | \, \theta'} \Bigl( \max_j
\langle \theta'_j - \theta''_j, w \rangle \Bigr) \leq 
\sqrt{2 \log(k) / \beta} \, 
\lVert w \rVert.
\]
The lemma follows from the definition of $h$.
\end{proof} 

Applying Lemma \vref{lem:1.9} to $h : \B{R}^{k \times \B{N}} \times \ell^2 
\rightarrow \B{R}$, $\pi = \rho_{\theta' \, | \, 
\theta = 0}$ and restricting the supremum in $\rho \in \C{M}_+^1 \bigl(
\B{R}^{k \times \B{N}} \bigr)$ to $\rho \in \bigl\{ 
\rho_{\theta' \, | \, \theta} \, : \, \theta \in ( \ell^2 )^k
\bigr\}$, we get
\begin{multline*}
\B{P}_{W_1, \, \dots \, , W_n} \Biggl\{ \exp \sup_{\theta \in (\ell^2)^k} 
\Biggl[ n \lambda  
\bigl( \B{P}_W - \oB{P}_W \bigr) \bigl( \rho_{\theta' \, | \, \theta} 
- \rho_{\theta' \, | \, \theta}^{2} \bigr) f( \theta', W) \\ 
- n \rho_{\theta' \, | \, \theta} \biggl[ 
\log \biggl( \B{P}_W \Bigl[ \exp \Bigl( - \lambda 
h \bigl( \theta', W \bigr)
\Bigr] \biggr) \biggr]
- 
\frac{\beta \lVert \theta \rVert^2 }{ 
2 } \Biggl] \Biggr\} \leq 1, 
\end{multline*}
where we have let $\eta$ go to $+ \infty$, using monotone convergence 
(since $h$ is bounded from the previous lemma) and 
where we have computed
\begin{multline*}
\C{K} \bigl( \rho_{\theta' \, | \, \theta}, \pi \bigr) = 
\sum_{j = 1}^k \C{K} \bigl( \rho_{\theta'j \, | \, \theta_j}, 
\rho_{\theta'_j \, | \, 0} \bigr) \\ = 
\sum_{j=1}^k \sum_{i \in \B{N}} \C{K} \bigl( 
\C{N} ( \theta_{j,i}, \beta^{-1} ) , \C{N} ( 0, \beta^{-1}) \bigr) = 
\frac{\beta}{2} \sum_{j=1}^k \sum_{i \in \B{N}} \theta_{j,i}^2
= \frac{\beta \lVert \theta \rVert^2}{2}.
\end{multline*}
Apply now Jensen's inequality and devide by $n \lambda$ to get
\begin{multline*}
\B{P}_{W_1, \, \dots \, , W_n} \Biggl\{ \sup_{\theta \in \ell_2^k} 
\Biggl[ 
\bigl( \B{P}_W - \oB{P}_W \bigr) \bigl( \rho_{\theta' \, | \, \theta} 
- \rho_{\theta' \, | \, \theta}^{2} \bigr) f( \theta', W) \\ 
- \lambda^{-1} \rho_{\theta' \, | \, \theta} \biggl[ 
\log \biggl( \B{P}_W \Bigl[ \exp \Bigl( - \lambda 
h \bigl( \theta', W \bigr)
\Bigr] \biggr) \biggr]
- 
\frac{\beta \lVert \theta \rVert^2 }{ 
2 n \lambda} \Biggl] \Biggr\} \leq 0. 
\end{multline*}
From Hoeffding's inequality, since $\B{P}_W \bigl( h(\theta', W) \bigr) = 0$, 
$\pi_{\theta'}$ almost surely,
\[
\B{P}_{W} \Bigl[ \exp \Bigl( - \lambda h( \theta', W ) \Bigr) \Bigr] 
\leq \exp \Bigl( \frac{ \lambda^2}{2} \ess \sup_{\B{P}_W} h( \theta', W)^2
\Bigr) \leq \exp \Bigl( \frac{4 \lambda^2}{\beta} \log(k) \lVert W \rVert_{
\infty}^2 \Bigr). 
\]
Considering a measurable bounded subset 
$\Theta \subset (\ell^2)^k$, we deduce that 
\[
\B{P}_{W_1, \, \dots \, , W_n} \biggl[ \, \sup_{\theta \in \Theta} \, 
\bigl( \B{P}_W - \oB{P}_W \bigr) \bigl( \rho_{\theta' \, | \, \theta} 
- \rho_{\theta' \, | \, \theta}^{2} \bigr) f( \theta', W) \biggr] 
\\ \leq \frac{4 \lambda}{\beta} \log(k) \lVert W \rVert_{\infty}^2 + 
\frac{\beta \lVert \Theta \rVert^2 }{2 n \lambda}.
\]
In order to minimize the right-hand side, choose
\[
\lambda = \frac{\beta \lVert \Theta \rVert }{
\sqrt{8 n \log(k)}\lVert W \rVert_{\infty}}
\] 
and define
\begin{equation}
\label{eq:F}
F = \lVert W \rVert_{\infty} \lVert \Theta \rVert
\sqrt{\frac{ 8 \log(k) }{ n }}.
\end{equation}
We get 
\[
\B{P}_{W_1, \, \dots \, , W_n} \biggl[ \, \sup_{\theta \in \Theta} \, 
\bigl( \B{P}_W - \oB{P}_W \bigr) \bigl( \rho_{\theta' \, | \, \theta} 
- \rho_{\theta' \, | \, \theta}^{2} \bigr) f( \theta', W) \biggr] 
\leq F.
\]
For any integer $q$, the iterated transition kernel $\rho^{2^q}_{\theta' 
\, | \, \theta}$ is equal to $\rho_{\theta' \, | \, \theta}$ 
with $\beta$ replaced by $2^{-q} \beta$. As $F$ is independent of $\beta$,
we therefore deduce that 
\[ 
\B{P}_{W_1, \, \dots \, ,W_n} \biggl\{ 
\sup_{\theta \in \Theta} \Bigl[ 
\Bigl( \B{P}_W - \oB{P}_W \Bigr) \Bigl( \rho_{\theta' | \theta}^{2^{q-1}} 
- 
\rho_{\theta' | \theta}^{2^q} \Bigr) f \bigl( \theta', W \bigr) 
\Bigr] \biggr\} \leq F. 
\] 
Summing up for $q = 1$ to $p$, where $p$ is to be chosen later, and exchanging $\sum_q$ and $\sup_{\theta}$, we deduce that
\begin{multline*}
\B{P}_{W_1, \, \dots \, , W_n} \biggl\{ 
\sup_{\theta \in \Theta} \Bigl[ 
\Bigl( \B{P}_W - \oB{P}_W \Bigr) \Bigl( \rho_{\theta' \, | \, \theta} 
- \rho_{\theta' \, | \, \theta}^{2^p} \Bigr) f \bigl( 
\theta', W \bigr) \Bigr] \biggr\} \\ \leq  
\B{P}_{W_1, \, \dots \, , W_n} \biggl\{ 
\sum_{q=1}^{p} \sup_{\theta \in \Theta} \Bigl[ 
\Bigl( \B{P}_W - \oB{P}_W \Bigr) \Bigl( \rho^{2^{q-1}}_{\theta' \, | \, \theta} 
- \rho^{2^{q}}_{\theta' \, | \, \theta} \Bigr) f \bigl( \theta', W \bigr) \Bigr] \biggr\} \leq p F . 
\end{multline*}
As we are interested in bounding from above
$\bigl( \B{P}_W - \oB{P}_W \bigr) 
f(\theta, W)$, according to the decomposition 
formula \myeq{eq:strat}, there remains to upper bound
\begin{align}
\label{eq:39}
& \bigl( \B{P}_W - \oB{P}_{W} \bigr) \bigl( \delta_{ \theta' \, | \, \theta}
- \rho_{\theta' \, | \, \theta} \bigr) \bigl[ f(\theta', W) \bigr] \\ 
\text{and } 
& \bigl( \B{P}_W - \oB{P}_{W} \bigr) 
 \rho_{\theta' \, | \, \theta}^{2^p} \bigl[ f(\theta', W)
\bigr], \\ 
\text{or with } & \text{a change of notation }  \nonumber\\
\label{eq:41}
& \bigl( \B{P}_W - \oB{P}_{W} \bigr) 
 \rho_{\theta' \, | \, \theta} \bigl[ f(\theta', W) \bigr].
\end{align}
An almost sure bound for \eqref{eq:39} is provided by Lemma \vref{lem:15},
since \eqref{eq:39} is equal to $\oB{P}_W \bigl[ h(\theta, W) \bigr]$.
To bound \eqref{eq:41}, introduce the influence function
\begin{equation}
\label{eq:psi}
\psi(x) = \begin{cases}
\log \bigl( 1 + x + x^2/2 \bigr), & x \geq 0,\\
- \log ( 1 - x + x^2/2 \bigr), & x \leq 0
\end{cases}
\end{equation}
and put
\[
\wt{f} ( \theta, W ) = f(\theta, W) - \frac{a + b}{2}.
\]
The function $\psi$ is chosen to be symmetric and to satisfy
\begin{equation}
\label{eq:psySmall}
\psi(x) \leq \log \bigl( 1 + x + x^2 / 2 \bigr), \qquad x \in \B{R},
\end{equation}
since we can check that
\[ 
\log \bigl( 1 + x + x^2/2 \bigr) + 
\log \bigl( 1 - x + x^2/2 \bigr) = 
\log \bigl[ \bigl( 1 + x^2/2 \bigr)^2 - x^2 \bigr] = 
\log \bigl( 1 + x^4/4 \bigr) \geq 0. 
\] 
Decompose \eqref{eq:41} into 
\begin{align}
\Bigl( \B{P}_W - \oB{P}_{W} \Bigr) 
 \rho_{\theta' \, | \, \theta} f(\theta', W) & = 
\rho_{\theta' \, | \, \theta} \Bigl( \B{P}_W - \oB{P}_W \Bigr) 
\wt{f} ( \theta', W )
\nonumber\\  
\label{eq:42}
& = \rho_{\theta' \, | \, \theta} \Bigl[ \B{P}_W \wt{f}(\theta', W) - 
\oB{P}_W \Bigl( \lambda^{-1} \psi \bigl[ \lambda 
\wt{f}(\theta', W) \bigr] \Bigr) \Bigr] \\ 
\label{eq:43}
& \qquad + \rho_{\theta' \, | \, \theta} \oB{P}_W \Bigl[ \lambda^{-1} 
\psi \bigl[ \lambda \wt{f}(\theta', W) \bigr] - \wt{f} (
\theta', W) \Bigr].
\end{align}
In order to bound \eqref{eq:43}, note that from lemma 7.2 in \cite{catoni2012}
\begin{align}
\label{eq:2.718}
\bigl\lvert x - \psi(x) \bigr\rvert \leq \frac{x^2}{4(1 + \sqrt{2})}, \qquad 
x \in \B{R}.
\end{align}
Therefore, from the inequalities $(a+b)^2 \leq 2a^2 +2b^2$ and
$\min_j a_j - \min_j b_j \leq \max_j (a_j - b_j)$, so that
$(\min_j a_j - \min_j b_j)^2 \leq \max_j (a_j - b_j)^2$,
for any $\theta \in \bigl(\ell^2\bigr)^k$, 
$\B{P}_W$ almost surely,
\begin{multline*}
\rho_{\theta' \, | \, \theta} \Bigl[ \lambda^{-1} 
\psi \bigl[ \lambda \wt{f}(\theta', W) \bigr] - \wt{f} (
\theta', W) \Bigr] 
\leq \frac{\lambda}{4(1 + \sqrt{2})}
\rho_{\theta' \, | \, \theta} \bigl[ 
\wt{f}(\theta', W)^2 \bigr]  
\\ \leq \frac{\lambda}{2(1 + \sqrt{2})} \Bigl[ \bigl( \min_j 
\langle \theta_j, W \rangle - (a+b)/2 \bigr)^2 + \rho_{\theta' \, | \, \theta} 
\Bigl( \max_j \langle \theta'_j - \theta_j, W \rangle^2 \Bigr) \Bigr]. 
\end{multline*}
At this point, it remains to bound the variance term $\rho_{\theta' \, | \, 
\theta} 
\Bigl( \max_j \langle \theta'_j - \theta_j, W \rangle^2 \Bigr)$.
Let us remark that
\[
\rho_{\theta' \, | \, \theta} \circ 
\bigl( \theta' \mapsto \langle \theta_j'  - \theta_j, W \rangle_{j=1}^k \bigr)^{-1} 
= \C{N} \Bigl( 0, \lVert W \rVert^2 / \beta 
\Bigr)^{\otimes k}.
\]
In other words, under $\rho_{\theta' \, | \, \theta}$, 
the sequence $\bigl( \langle \theta'_j - \theta_j, W \rangle, 1 \leq j \leq k
\bigr)$ is made of $k$ independent centered normal random 
variables with variance 
$\lVert W \rVert^2 / \beta$.
Therefore, we need the following maximal inequality.
\begin{lemma}
\label{lem: 2.7182}
Let $(\varepsilon_1,\dots,\varepsilon_k)$ be a sequence of Gaussian random variables such that $\varepsilon_j \sim \C{N}(0,\sigma^2)$.
We have
\[
 \B{E}\big(\max_{1\leq j \leq k} \, \varepsilon_j^2 \big) \leq 2 \sigma^2 \log(ek).
\]
\end{lemma}
\begin{proof}
\begin{multline*}
 \B{E}\Bigl(\max_{1 \leq j \leq k} \, \varepsilon_j^2 \Bigr) =
\int_{\B{R}_{+}} \B{P}\Bigl(\max_{1 \leq j \leq k } \,  \varepsilon_j^2 >
\,t \Bigr) \, \ud t \\
\leq \int_{\B{R}_{+}} \min \biggl\{
\sum_{j=1}^k \B{P}\bigl(\varepsilon_j^2 >
\,t \bigr), 1 \biggr\} \, \ud t 
\leq \int_{\B{R}_{+}} \min \Bigl\{ 2 k \, \B{P}\bigl(\varepsilon_1 > \sqrt{t} 
\bigr),
1 \Bigr\} \, \ud t \\ \leq 
\int_{\B{R}_{+}} \min \Bigl\{ k \exp \Bigl(
- \frac{t}{2\sigma^2}  \Bigr), 1 \Bigr\} \, \ud t 
\leq 2\sigma^2 \log(k) + \int_{2 \sigma^2 \log(k)}^{+ \infty}
k \exp \Bigl(- \frac{t}{2\sigma^2}   \Bigr) \, \ud t \\
\leq 2 \sigma^2 \log(k) + 2 \sigma^2=2\sigma^2 \log(ek).
\end{multline*}
\end{proof}

Accordingly, we obtain $\B{P}_W$ almost surely,
\begin{multline}
\label{eq:44}
\rho_{\theta' \, | \, \theta} \Bigl[ \lambda^{-1} 
\psi \bigl[ \lambda \wt{f}(\theta', W) \bigr] - \wt{f} (
\theta', W) \Bigr]
\\ \leq \frac{\lambda}{2(1 + \sqrt{2})} \Bigl[ \bigl( \min_j 
\langle \theta_j, W \rangle - (a+b)/2 \bigr)^2 + \rho_{\theta' \, | \, \theta} 
\Bigl( \max_j \langle \theta'_j - \theta_j, W \rangle^2 \Bigr) \Bigr] 
\\ \leq \frac{\lambda}{2 (1 + \sqrt{2})} \bigl[ (b-a)^2/4 + 2 \log(ek) \lVert W
\rVert_{\infty}^2/\beta \bigr]. 
\end{multline}
The right-hand side of this inequality provides an almost sure upper bound 
for \eqref{eq:43}.
To bound \eqref{eq:42}, or rather the expectation of an exponential 
moment of \eqref{eq:42}, we can write a PAC-Bayesian bound using
the influence function $\psi$. According to Lemma \vref{lem:1.9}, 
\begin{multline*}
\B{P}_{W_1, \, \dots \, , W_n} \Biggl\{ 
\exp \sup_{\theta \in \Theta} \Biggl[ 
- n \lambda \rho_{\theta' \, | \, \theta} \oB{P}_{W} \Bigl( \lambda^{-1} 
\psi \bigr[ \lambda \wt{f} ( \theta', W ) \bigr] 
\Bigr)
\\ -n \rho_{\theta' \, | \, \theta} \biggl[ \log 
\biggl( \B{P}_W \Bigl[ \exp \Bigl( \psi \bigl[ - \lambda \wt{f} \bigl(  
\theta', W \bigr) \bigr] \Bigr) \Bigr] \biggr) \biggr] 
- \frac{ \beta \lVert \theta \rVert^2}{2} \Biggr] \Biggr\} \leq 1.
\end{multline*}
Indeed, it is easy to check that the integrand of $\rho_{\theta' \, | \, 
\theta}$ is integrable, so that we can apply the monotone convergence
theorem to remove $\eta$ from the equation produced by Lemma \ref{lem:1.9}.
Using the bound \myeq{eq:psySmall} 
and removing the exponential according to Jensen's inequality,
we obtain
\begin{multline*}
\B{P}_{W_1, \, \dots \, , W_n} \Biggl\{ 
\sup_{\theta \in \Theta} \Biggl[ 
\rho_{\theta' \, | \, \theta} \Bigl[  \B{P}_W \Bigl( \wt{f} \bigl( \theta', W \bigr) 
\Bigr)
-\oB{P}_W \Bigl( \lambda^{-1} \psi \bigl[ \lambda \wt{f}(\theta', W) \bigr] 
\Bigr) \Bigr]  
\\ - \frac{\lambda}{2} \rho_{\theta' \, | \, \theta} 
\Bigl[ \B{P}_W \Bigl( 
\wt{f} ( \theta', W )^2 \Bigr) \Bigr] \Biggr] \Biggr\}  \leq 
 \frac{\beta \lVert \Theta \rVert^2
}{2}. 
\end{multline*} 
Using the maximal inequality stated in Lemma \vref{lem: 2.7182}
to bound the variance term, we get
\begin{multline*}
\B{P}_{W_1, \, \dots \, , W_n} \biggl\{ 
\sup_{\theta \in \Theta} \rho_{\theta' \, | \, \theta} \Bigl[  \B{P}_W \Bigl( \wt{f} \bigl( \theta', W \bigr) 
\Bigr)
-\oB{P}_W \Bigl( \lambda^{-1} \psi \bigl[ \lambda \wt{f}(\theta', W) \bigr] 
\Bigr) \Bigr] \biggr\} 
\\ \leq \lambda \Bigl[ (b-a)^2/4 + 2 \log(ek) \lVert W \rVert_{\infty}^2 / 
\beta \Bigr] 
+ \frac{ \beta \lVert \Theta \rVert^2}{2n\lambda}.
\end{multline*}
This provides an upper bound for \eqref{eq:42}. Combining it with the 
upper bound for \eqref{eq:43} gives an upper bound for \eqref{eq:41}
that reads 
\begin{multline*}
\B{P}_{W_1, \, \dots \, , W_n} \biggl\{ 
\sup_{\theta \in \Theta} \bigl( \B{P}_W - \oB{P}_{W} \bigr) 
 \rho_{\theta' \, | \, \theta} \bigl[ f(\theta', W) \bigr] \biggr\}\\  
\leq \frac{(\sqrt{2} + 1) \lambda}{2} 
\Bigl[ (b-a)^2/4 + 2 \log(ek) \lVert W \rVert_{\infty}^2 / 
\beta \Bigr] 
+ \frac{ \beta \lVert \Theta \rVert^2}{2n\lambda}.
\end{multline*}
Choosing 
\[
\lambda = \sqrt{\frac{4 \beta \lVert \Theta \rVert^2 
}{(\sqrt{2} + 1)\bigl[(b-a)^2 + 8 \log(ek) \lVert W \rVert_{\infty}^2 / 
\beta \bigr] n}} 
\] 
gives
\begin{multline*}
\B{P}_{W_1, \, \dots \, , W_n} \biggl\{ 
\sup_{\theta \in \Theta}
\bigl( \B{P}_W - \oB{P}_{W} \bigr) 
 \rho_{\theta' \, | \, \theta} \bigl[ f(\theta', W) \bigr] \biggr\}
\\ \leq
\wt{F}(\beta) \overset{\text{def}}{=} \sqrt{\frac{(\sqrt{2} + 1 )\Bigl( \beta
(b-a)^2 + 8 \log(ek) \lVert W \rVert_{\infty}^2 
\Bigr) \lVert \Theta \rVert^2 }{4n}}.
\end{multline*}
Putting everything together, we obtain
\[
\B{P}_{W_1, \, \dots \, , W_n} \Bigl\{ 
\sup_{\theta \in \Theta}
\bigl( \B{P}_W - \oB{P}_W \bigr) f(\theta, W) \Bigr\} \leq
2 \sqrt{2 \log(k)/\beta} \lVert W \rVert_{\infty} + 
\wt{F}(2^{-p} \beta) + p F,
\]
where $F$ is defined by equation \myeq{eq:F}. 

Let us choose $\beta = 2 n \lVert \Theta \rVert^{-2}$ and 
$p = \bigl\lfloor \log(n/k)/\log(2) \bigr\rfloor$, so that
\[
2^{-p} \beta \leq 4 k \lVert \Theta \rVert^{-2}. 
\]
We get
\begin{multline*}
\B{P}_{W_1, \, \dots \, , W_n} \Bigl\{ 
\sup_{\theta \in \Theta}
\bigl( \B{P}_W - \oB{P}_W \bigr) f(\theta, W) \Bigr\} \\ \leq
\Biggl( \frac{\log(n/k)}{\log(2)} \sqrt{\frac{8 \log(k)}{n}} 
+ 
2 \sqrt{\frac{\log(k)}{n}} \; \Biggr)
\lVert \Theta \rVert \lVert W \rVert_{\infty}  
\\ + 
\sqrt{\frac{(\sqrt{2} + 1 )\Bigl( k (b-a)^2 + 2 \log(ek) 
\lVert W \rVert_{\infty}^2 \lVert \Theta \rVert^2  
\Bigr)}{n}}.
\end{multline*}
The upper deviations from this mean are controled by 
the extension of Hoeffding's bound called the
bounded difference inequality (see section 6.1 and 
theorem 6.2 in \cite{boucheron}). 
It gives with probability at least $1-\delta$
\[
 \sup_{\theta \in \Theta}
\bigl( \B{P}_W - \oB{P}_W \bigr) f(\theta, W) \leq 
\B{P}_{W_1, \, \dots \, , W_n} \Bigl\{ 
\sup_{\theta \in \Theta}
\bigl( \B{P}_W - \oB{P}_W \bigr) f(\theta, W) \Bigr\} +
\sqrt{\frac{2 \log(\delta^{-1})}{n}} (b-a).
\]
This proves the first statement of the lemma. 
To get 
the second one, 
add to the previous inequality 
\[
\B{P}_{W_1, \, \dots \, , W_n} \Bigl\{ 
\Bigl( \oB{P}_W - \B{P}_W \Bigr) f( \theta^*, W) \Bigr\} = 0
\]
to get
\begin{multline*}
\B{P}_{W_1, \, \dots \, , W_n} \Bigl\{ 
\sup_{\theta \in \Theta}
\bigl( \B{P}_W - \oB{P}_W \bigr) \Bigl( f(\theta, W) - f(\theta^*, W) 
\Bigr) \Bigr\} \\ \leq
\Biggl( \frac{\log(n/k)}{\log(2)} \sqrt{\frac{8 \log(k)}{n}} 
+ 
2 \sqrt{\frac{\log(k)}{n}} \; \Biggr)
\lVert \Theta \rVert \lVert W \rVert_{\infty}  
\\ + 
\sqrt{\frac{(\sqrt{2} + 1 )\Bigl( k (b-a)^2 + 2 \log(ek) 
\lVert W \rVert_{\infty}^2 \lVert \Theta \rVert^2  
\Bigr)}{n}},
\end{multline*}
and apply the bounded difference inequality 
to get the deviations.
To prove the end of the proposition concerning an 
estimator $\wh{\theta}$, apply what is already proved 
to the weak closure $\ov{\Theta}$ of $\Theta$ and to 
\[
\theta^* \in \arg \min_{\theta \in \ov{\Theta}} 
\B{P}_W \Bigl( \min_{j \in \dlb 1, \, k \drb} \langle 
\theta_j, W \rangle \Bigr)
\] 
that exists due to Proposition \vref{prop:exists}.
\end{proof}

\section{Generalization bounds for the quadratic $k$-means criterion}

The most obvious application of the previous lemma is to get 
a dimension free bound for the usual quadratic $k$-means criterion.

\begin{prop}
Consider a random vector $X$ in a separable Hilbert space $H$. 
Let \linebreak $(X_1, \dots, X_n)$ be a sample made of $n$ independent copies 
of $X$. Consider the ball of radius $B$
\[
\C{B} = \bigl\{ x \in H \, : \, \lVert x \rVert \leq B \bigr\}
\]
and assume that $\B{P} \bigl( X \in \C{B} \bigr) = 1$
and that $n \geq 2 k$ and $k \geq 2$. 
For any $\delta \in ]0,1[$, with probability at least 
$1 - \delta$, 
\begin{multline*}
\sup_{c \in \C{B}^k} \bigl( \B{P}_X - \oB{P}_X \bigr) \Bigl( \min_{j \in \dlb 1, k \drb} 
\bigl( \lVert c_j \rVert^2 - 2 \langle c_j, X \rangle \bigr) \Bigr) \\ \shoveleft{ 
\qquad 
\leq 
 B^2 \log \Bigl(\frac{n}{k} \Bigr) 
\sqrt{\frac{k \log(k) }{n}}
\Biggl( \underbrace{\rule[-4ex]{0ex}{2ex}\frac{6 \sqrt{2}}{\log(2)}}_{\leq 12.3} + \frac{6}{ \log(n/k)}} \\ \shoveright{
+ \frac{1}{\log(n/k)} \sqrt{\frac{2 \bigl( \sqrt{2} + 1 \bigr) \bigl( 17 + 9 \log(k)\bigr)
}{
\log(k)}}
\; \Biggr)
 + 2 B^2 \sqrt{\frac{2 \log(\delta^{-1})}{n}} }
\\ \leq 16 B^2 \log \Bigl( \frac{n}{k} \Bigr) \sqrt{\frac{
k \log(k)}{n}} 
+ 2 B^2 \sqrt{\frac{2 \log(\delta^{-1})}{n}}.
\end{multline*}
Concerning the excess risk, for any $c^* \in \C{B}^k$,
with probability at least $1 - \delta$, 
\begin{multline*}
\sup_{c \in \C{B}^k} \bigl(\B{P}_X - \oB{P}_X \bigr) \Bigl[ 
\Bigl( \min_{j \in \dlb 1, k \drb} 
\lVert X - c_j \rVert^2 \Bigr) 
- \Bigl( \min_{j \in \dlb 1, k \drb} 
\lVert X - c^*_j \rVert^2 \Bigr) \Bigr]
\\ \leq 
16 \, B^2 \log \Bigl(\frac{n}{k} \Bigr) 
\sqrt{\frac{k \log(k) }{n}}
+ 4 \, B^2 \sqrt{\frac{2 \log(\delta^{-1})}{n}} .
\end{multline*}
Consequently, for any $\epsilon \geq 0$, 
for any $\epsilon$-minimizer
$\wh{c}$, that is for any $\wh{c} \in \C{B}^k$ 
depending on the observed sample and satisfying
\[
\oB{P}_X \Bigl( \min_{j \in \dlb 1, k \drb} 
\lVert X - \wh{c}_j \rVert^2 \Bigr) \leq
\inf_{c \in H^k}
\oB{P}_X \Bigl( \min_{j \in \dlb 1, k \drb} 
\lVert X - c_j \rVert^2 \Bigr) + \epsilon,
\]
for any $\delta \in ]0,1[$,
with probability at least $\ds 1 - \delta $,
\begin{multline*}
\B{P}_X \Bigl( \min_{j \in \dlb 1, k \drb} 
\lVert X - \wh{c}_j \rVert^2 \Bigr) \leq \inf_{c \in H^k}
\B{P}_X \Bigl( \lVert X - c_j \rVert^2 \Bigr) \\
+ 16 \, B^2 \log \Bigl(\frac{n}{k} \Bigr) 
\sqrt{\frac{k \log(k) }{n}}
+ 4 \, B^2 \sqrt{\frac{2 \log(\delta^{-1})}{n}} + \epsilon .
\end{multline*}
Moreover, we also have a bound in expectation with respect 
to the statistical sample distribution: 
\begin{multline*}
\B{P}_{X_1, \dots, X_n} \Bigl[ \B{P}_X \Bigl( \min_{j \in \dlb 1, k \drb}
\lVert X - \wh{c}_j \rVert^2 \Bigr) \Bigr] \leq \inf_{c \in H^k} 
\B{P}_X \Bigl( \lVert X - c_j \rVert^2 \Bigr) 
\\ + 16 \, B^2 \log \Bigl( \frac{n}{k} \Bigr) \sqrt{\frac{k \log(k)}{n}}
+ \epsilon. 
\end{multline*}
\end{prop}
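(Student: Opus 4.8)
The entire statement is a corollary of Lemma \ref{lem:14.1}, obtained through the kernel trick promised after Definition \ref{def:linear}. The plan is to work in the separable Hilbert space $H' = \B{R} \oplus H$, to set $W = \bigl( B / \sqrt{2}, \, X \bigr)$, and to associate to each $c \in \C{B}^k$ the parameter $\theta(c) \in (H')^k$ whose $j$-th block is $\theta_j(c) = \bigl( \sqrt{2}\,\lVert c_j \rVert^2 / B, \, - 2 c_j \bigr)$. Then $\langle \theta_j(c), W \rangle_{H'} = \lVert c_j \rVert^2 - 2 \langle c_j, X \rangle = \lVert X - c_j \rVert^2 - \lVert X \rVert^2$, so that $\min_{j} \langle \theta_j(c), W \rangle = \min_j \lVert X - c_j \rVert^2 - \lVert X \rVert^2$ and, in particular, $\bigl( \B{P}_X - \oB{P}_X \bigr) \bigl( \min_j \langle \theta_j(c), W \rangle \bigr) = \bigl( \B{P}_X - \oB{P}_X \bigr) \bigl( \min_j ( \lVert c_j \rVert^2 - 2 \langle c_j, X \rangle ) \bigr)$, the term $\lVert X \rVert^2$ being irrelevant because it does not depend on $c$. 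Writing $\Theta = \{ \theta(c) : c \in \C{B}^k \}$, one checks that $\Theta$ is the $k$-fold product of the graph of the continuous map $v \mapsto \lVert v \rVert^2 / (2\sqrt{2} B)$ over the ball $\{ v \in H : \lVert v \rVert \leq 2B \}$, hence a bounded Borel (thus measurable) subset of $(H')^k$ on which $\sup_{c \in \C{B}^k}$ and $\sup_{\theta \in \Theta}$ coincide.

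Next I would record the three quantities entering Lemma \ref{lem:14.1}. From $\lVert W \rVert^2 = B^2/2 + \lVert X \rVert^2 \leq 3 B^2 / 2$ one gets $\lVert W \rVert_\infty \leq B \sqrt{3/2}$, and from $\lVert \theta_j(c) \rVert^2 = 2 \lVert c_j \rVert^4 / B^2 + 4 \lVert c_j \rVert^2 \leq 6 B^2$ one gets $\lVert \Theta \rVert \leq B \sqrt{6k}$; hence $\lVert \Theta \rVert \lVert W \rVert_\infty \leq 3 B^2 \sqrt{k}$ and $\lVert W \rVert_\infty^2 \lVert \Theta \rVert^2 \leq 9 k B^4$ — the scale $B/\sqrt{2}$ being chosen precisely so that this product is as small as possible. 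Since $-\lVert X \rVert^2 \leq \min_j \langle \theta_j(c), W \rangle \leq \langle \theta_1(c), W \rangle \leq \lVert c_1 \rVert^2 + 2 \lVert c_1 \rVert \lVert X \rVert \leq 3 B^2$ almost surely for every $\theta \in \Theta$, one may take $a = - B^2$, $b = 3 B^2$, so $b - a = 4 B^2$; together with $n \geq 2k$ and $k \geq 2$ all the hypotheses of Lemma \ref{lem:14.1} hold. Applying its first conclusion and substituting these values (using $k (b-a)^2 + 2 \log(ek) \lVert W \rVert_\infty^2 \lVert \Theta \rVert^2 \leq 2 k B^4 ( 17 + 9 \log k )$ and $\sqrt{\log(\delta^{-1})/(2n)}\,(b-a) = 2 B^2 \sqrt{2 \log(\delta^{-1})/n}$) gives the first displayed bound of the Proposition; its second conclusion, combined with the identity $\min_j \lVert X - c_j \rVert^2 - \min_j \lVert X - c^*_j \rVert^2 = \min_j \langle \theta_j(c), W \rangle - \min_j \langle \theta_j(c^*), W \rangle$ (the $\lVert X \rVert^2$ again cancels), gives the excess-risk bound, with $\delta$-term $(b-a) \sqrt{2 \log(\delta^{-1})/n} = 4 B^2 \sqrt{2 \log(\delta^{-1})/n}$.

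For the last two statements I would first observe that an $\epsilon$-minimiser $\wh{c} \in \C{B}^k$ of the empirical quadratic criterion over $H^k$ is exactly an $\epsilon$-minimiser of the empirical linear criterion $\theta \mapsto \oB{P}_W(\min_j \langle \theta_j, W \rangle)$ over $\Theta$: the additive constant $\oB{P}_X(\lVert X \rVert^2)$ cancels, and $\inf_{c \in H^k} \oB{P}_X(\min_j \lVert X - c_j \rVert^2) = \inf_{c \in \C{B}^k} \oB{P}_X(\min_j \lVert X - c_j \rVert^2)$ because projecting each center onto the convex ball $\C{B}$ cannot increase distances to the sample points, which lie in $\C{B}$; the same identity holds for $\B{P}_X$ since $\B{P}_X(\C{B}) = 1$. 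Thus the third and fourth conclusions of Lemma \ref{lem:14.1} apply to $\theta(\wh c)$ and, rewriting the linear criteria back in quadratic form and replacing $\inf_{c \in \C{B}^k} \B{P}_X(\,\cdot\,)$ by $\inf_{c \in H^k} \B{P}_X(\,\cdot\,)$, yield the $\epsilon$-minimiser deviation bound and the bound in expectation. The argument is essentially bookkeeping once the embedding is in place; the points that require care are the choice of scale $B/\sqrt{2}$ in $W$ (a different scale worsens every constant), the cancellation of the $\lVert X \rVert^2$ term in the excess risk and in the $\epsilon$-minimiser condition (so that optimising over $H^k$ and over $\C{B}^k$ agree), and the final numerical estimate of the three coefficients — using $\log(n/k) \geq \log 2$ and $\log k \geq \log 2$ — that collapses the explicit bound into the round form $16 B^2 \log(n/k) \sqrt{k \log(k)/n}$.
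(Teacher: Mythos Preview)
Your linearisation is essentially identical to the paper's: the paper sets $W = (-2X,\sqrt{2}\,B)$ and $\theta_j = \bigl(c_j, \lVert c_j \rVert^2/(\sqrt{2}\,B)\bigr)$, which is your embedding with the factor $2$ moved from $\theta$ to $W$. Both yield $\lVert W \rVert_\infty^2 \lVert \Theta \rVert^2 \leq 9kB^4$, $b-a = 4B^2$, and the substitution into Lemma~\ref{lem:14.1} that produces the explicit bound is the same. Your treatment of the $\epsilon$-minimiser (projection onto $\C{B}$, cancellation of $\lVert X \rVert^2$) is also what the paper does.

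There is, however, a genuine gap in your last step. The claim that ``$\log(n/k) \geq \log 2$ and $\log k \geq \log 2$'' suffice to collapse the explicit bound into $16 B^2 \log(n/k)\sqrt{k\log(k)/n}$ is false: at $n = 2k$, $k = 2$ the bracket in the explicit bound exceeds $39$, so the explicit expression is roughly $15.9\,B^2$ while the target $16\log 2\sqrt{2\log 2/4}\,B^2 \approx 6.5\,B^2$ is strictly smaller. The inequality between the two displayed bounds is \emph{not} an inequality between formulas; it uses the additional observation that the supremum is trivially at most $4B^2$ (since the loss lies in $[-B^2,3B^2]$). The paper then proves
\[
\min\Bigl\{4,\ \text{(explicit bound)}/B^2\Bigr\} \leq 16 \log(n/k)\sqrt{k\log(k)/n},
\]
which requires showing that whenever the explicit bound exceeds the simplified one, the simplified one already exceeds $4$. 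The paper reduces this to a one-variable convex inequality in $\xi = \log(n/k)$ and checks it numerically at the endpoints $\xi = \log 2$ and $\xi = \eta(2)/(16 - 6\sqrt{2}/\log 2)$. You need this argument (or an equivalent) for the simplified inequalities; the rest of your proof is correct.
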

The general meaning of this proposition is that
a chaining argument yields a dimension free non asymptotic
 generalization bound that 
decreases as $\sqrt{k/n}$ 
up to logarithmic factors.

\begin{proof}
We choose to work with the risk function 
\[
\min_{j \in \dlb 1, k \drb} \bigl( \lVert c_j \rVert^2 - 2 \langle
c_j, X \rangle \bigr) = 
\min_{j \in \dlb 1, k \drb} \bigl( \lVert X - c_j \rVert^2 \bigr) - 
\lVert X \rVert^2  
\]
because this provides slightly better constants.
Introduce $W = ( - 2 X, \gamma B) \in H \times \B{R}$
and $\theta_j = ( c_j, \gamma^{-1} \lVert c_j \rVert^2 B^{-1})$,
where the parameter $\gamma > 0$ will be optimized later on.
Remark that
\[
\lVert c_j \rVert^2 - 2 \langle c_j, X \rangle = \langle \theta_j, W \rangle\in [-B^2, 3 B^2 ].
\] 
Note also that 
\[
\lVert W \rVert^2 \lVert \theta_j \rVert^2 \leq B^4 \bigl( 4 + \gamma^2 \bigr)
\bigl( 1 + \gamma^{-2} \bigr) = B^4 \bigl( 5 + \gamma^2 + 4 \gamma^{-2} \bigr)
\] 
and optimize the right-hand size, choosing $\gamma = \sqrt{2}$, to get
\[
\lVert W \rVert^2 \lVert \Theta \rVert^2 \leq 9 k B^4,
\]
where 
\[
\Theta = \bigl\{ \bigl( c_j, 2^{-1/2} B^{-1} 
\lVert c_j \rVert^2 \bigr)_{j=1}^k \in (H \times \B{R})^k \,
: \, c \in \C{B}^k \bigr\}.
\] 
The proposition is then a transcription of 
Lemma \vref{lem:14.1} together with the simplification
\begin{multline}
\label{eq:simple}
\min \Biggl\{ 4,  
\log \Bigl(\frac{n}{k} \Bigr) 
\sqrt{\frac{k \log(k) }{n}}
\Biggl( \frac{6 \sqrt{2}}{\log(2)} + \frac{6}{ \log(n/k)} 
\\ + \frac{1}{\log(n/k)} \sqrt{\frac{2 \bigl( \sqrt{2} + 1 \bigr) \bigl( 17 + 9 \log(k)\bigr)
}{
\log(k)}}
\; \Biggr) \Biggr\}
\\ \leq 16
\log \Bigl(\frac{n}{k} \Bigr) 
\sqrt{\frac{k \log(k) }{n}}
\end{multline}
that holds for any $k \geq 2$ and any $n \geq 2k$ and can be used
since $4B^2$ is a trivial bound.
Remark that in the three last inequalities of the 
proposition we can take the infimum on $c \in H^k$
instead of $c \in \C{B}^k$, since it is in fact reached on $\C{B}^k$.

Thus, all that remains to prove is \eqref{eq:simple}.

Putting $\ds a = \frac{6 \sqrt{2}}{\log(2)}$, $b = 16$, $\rho = n/k$,
\begin{align*}
\eta & = 6 + \sqrt{\frac{2 \bigl( \sqrt{2} + 1 \bigr) \bigl( 
17 + 9 \log(k) \bigr)}{\log(k)}}, \\ 
f(\rho, k) & = \sqrt{\log(k)/\rho} \Bigl(a \log(\rho) + \eta(k) \Bigr) \\ 
\text{and } g(\rho, k) & = b \sqrt{ \log(k) / \rho} \log(\rho),
\end{align*}
we have to prove that
\[
\min \bigl\{ 4, f(\rho, k) \bigr\} \leq g(\rho, k), \qquad \rho \geq 2, 
k \geq 2.
\] 
In other words, we have to prove that, when $g(\rho, k) < f(\rho, k)$,
then $g(\rho, k) \geq 4$. This can also be written as
\[
g(\rho, k) \geq 4, \qquad \min \{ \rho, k \} \geq 2, \; g(\rho, k) < f(\rho, k).
\] 
According to the definitions, this is also equivalent to 
\[
\log(\rho) - 2 \log \bigl( \log(\rho) \bigr) \leq 2 \log(b/4) + \log
\bigl( \log(k) \bigr), \quad \min \{ \rho, k \} \geq 2, \;
(b - a) \log(\rho) \leq \eta(k).
\] 
Since $\eta$ is decreasing and since $k \mapsto \log \bigl( \log
(k) \bigr)$ is increasing, if the statement is true for $k = 2$, it is 
true for any $k \geq 2$. Thus we have to prove that
\[
\log(\rho) - 2 \log \bigl( \log(\rho) \bigr) \leq 2 \log ( b / 4 ) + 
\log \bigl( \log(2) \bigr), \quad \log(2) \leq \log(\rho) \leq \eta(2) / (b-a). 
\]
Putting $\xi = \log(\rho)$, we have to prove that
\[
\xi - 2 \log(\xi) \leq 2 \log(b/4) + \log \bigl( \log(2) \bigr), \quad
\log(2) \leq \xi \leq \eta(2) / (b-a).
\]
Since $\xi \mapsto \xi - 2 \log(\xi)$ is convex, it is enough to 
check the inequality at the two ends of the interval, that is 
when $\xi \in \{ \log(2), \eta(2)/(b-a) \}$, which can be done 
numerically. More precisely, we have to check that
\begin{multline*}
2 \log(b/4) + \log \bigl( \log(2) \bigr) \\ - \max \Bigl\{ 
\log(2) - 2 \log \bigl( \log(2) \bigr), \eta(2)/(b-a) - 2 
\log \bigl[ \eta(2)/(b-a) \bigr] \Bigr\} \geq 0,
\end{multline*}
and we get numerically that the left-hand side is larger than the 
minimum of $0.9$ and $0.6$.
\end{proof}

\section{Generalization bounds for the robust $k$-means
criterion}

\begin{prop}
Let $X$ be a random vector in a separable Hilbert space $H$
and let $(X_1, \dots, X_n)$ be a statistical sample made of 
$n$ independent copies of $X$.
Consider for some scale parameter $\sigma > 0$
the criterion $\C{R}_2$ of equation \myeq{eq:robustQuadratic}
and its empirical counterpart
\[
\oC{R}_2(c) = 2 \sigma^2 \, \oB{P}_X \Bigl[ 1 - \exp \Bigl( 
- \frac{1}{2 \sigma^2} \min_{j \in \dlb 1, \, k \drb} 
\lVert X - c_j \rVert^2 \Bigr) \Bigr], \quad c \in H^k.
\] 
Consider any $k \geq 2$ and any $n \geq 2 k$.
For any $\delta \in ]0,1[$, 
with probability at least $1 - \delta$, for any $c \in 
H^k$,
\begin{multline*}
\C{R}_2(c) \leq 
\oC{R}_2(c) + 2 \sigma^2 \Biggl(  
\frac{\log(n/k)}{\log(2)} \sqrt{\frac{8 \, k \log(k)}{n}} + 
2 \sqrt{\frac{k \log(k)}{n}} 
\\ + 
\sqrt{\frac{(\sqrt{2} + 1 ) \, k \, \bigl( 3 + 2 \log(k) 
\bigr) }{n}} + \sqrt{\frac{\log(\delta^{-1})}{2n}} \; \Biggr).
\end{multline*}
For any non random family of centers $c^* \in H^k$, 
with probability at least $1 - \delta$, for any $c 
\in H^k$,
\begin{multline*} 
\C{R}_2(c) - \C{R}_2(c^*) \leq
\oC{R}_2(c) - \oC{R}_2(c^*) + 2 \sigma^2 \Biggl(
 \frac{\log(n/k)}{\log(2)} 
\sqrt{\frac{8 \, k  \log(k)}{n}} + 
2 \sqrt{\frac{k \log(k)}{n}} 
\\ + 
\sqrt{\frac{(\sqrt{2} + 1 )\, k \, \bigl( 3 + 2 \log(k) 
\bigr) }{n}} + \sqrt{\frac{2\log(\delta^{-1})}{n}} \; \Biggr). 
\end{multline*} 
Consequently, for any $\epsilon \geq 0$,
if $\wh{c}$ is an $\epsilon$-minimizer satisfying 
\[
\oC{R}_2( \wh{c} \, ) \leq \inf_{c \in \B{R}^{d \times k}} 
\oC{R}_2(c) + \epsilon,
\]
with probability at least $1 - \delta$, 
\begin{multline*} 
\C{R}_2 ( \wh{c} \, ) \leq \inf_{c \in \B{R}^{d \times k}} 
\C{R}_2 ( c ) + 2 \sigma^2 \Biggl(
\frac{\log(n/k)}{\log(2)} \sqrt{\frac{8 \,
k \log(k)}{n}} + 
2 \sqrt{\frac{k \log(k)}{n}} 
\\ + 
\sqrt{\frac{(\sqrt{2} + 1 ) \, k \, \bigl( 3 + 2 \log(k) 
\bigr) }{n}} + \sqrt{\frac{2\log(\delta^{-1})}{n}} \; \Biggr)
 + \epsilon.
\end{multline*} 
In the same way, in expectation,
\begin{multline*} 
\B{P}_{X_1, \, \dots \, , X_n} \Bigl( \C{R}_2 ( \wh{c} \, ) \Bigr)
\leq \inf_{c \in \B{R}^{d \times k}} 
\C{R}_2 ( c ) + 2 \sigma^2 \Biggl(
\frac{\log(n/k)}{\log(2)} \sqrt{\frac{8 \,
k \log(k)}{n}} + 
2 \sqrt{\frac{k \log(k)}{n}} 
\\ + 
\sqrt{\frac{(\sqrt{2} + 1 ) \, k \, \bigl( 3 + 2 \log(k) 
\bigr) }{n}} 
\; \Biggr)
 + \epsilon.
\end{multline*} 
\end{prop}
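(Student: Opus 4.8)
The plan is to deduce all four displayed bounds from Lemma~\ref{lem:14.1} after rewriting the robust criterion as a linear $k$-means criterion via the Gaussian kernel trick. First I would recall that the Gaussian kernel $K_\sigma(x,y) = \exp\bigl( - \lVert x - y\rVert^2/(2\sigma^2)\bigr)$ is positive definite on the separable Hilbert space $H$: indeed
\[
K_\sigma(x,y) = \exp\Bigl( - \tfrac{\lVert x\rVert^2}{2\sigma^2}\Bigr)\exp\Bigl( - \tfrac{\lVert y\rVert^2}{2\sigma^2}\Bigr)\sum_{m \geq 0}\frac{\langle x, y\rangle^m}{m!\,\sigma^{2m}}
\]
is the product of a rank-one positive kernel with a convergent sum of powers of the scalar product, hence positive definite. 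Let $\C{H}$ be its reproducing kernel Hilbert space, which is separable because $H$ is, and let $\Phi : H \rightarrow \C{H}$, $\Phi(x) = K_\sigma(\cdot, x)$, be the canonical feature map, so that $\langle \Phi(x), \Phi(y)\rangle = K_\sigma(x,y)$ and in particular $\lVert \Phi(x) \rVert = K_\sigma(x,x)^{1/2} = 1$ for every $x \in H$; being continuous, $\Phi$ is Borel measurable.

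Next I would set $W = - \Phi(X) \in \C{H}$, $\theta_j = \Phi(c_j)$ for $c \in H^k$, and take for $\Theta$ the weak closure in $\C{H}^k$ of $\bigl\{ (\Phi(c_j))_{j=1}^k \, : \, c \in H^k \bigr\}$; this set is weakly closed and bounded, hence weakly compact and measurable, with $\lVert \Theta \rVert = \sqrt{k}$, while $\lVert W\rVert = 1$ almost surely so $\lVert W \rVert_\infty = 1$. Since $\langle \theta_j, W\rangle = - K_\sigma(c_j, X) \in [-1, 0]$ for $\theta$ in the generating family, and this bracket is weakly continuous in $\theta_j$, it stays in $[-1,0]$ on the whole weak closure, so we may take $a = -1$, $b = 0$, i.e. $b - a = 1$. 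Finally
\[
\min_{j \in \dlb 1, k \drb} \langle \theta_j, W\rangle = - \exp\Bigl( - \tfrac{1}{2\sigma^2} \min_{j \in \dlb 1, k \drb} \lVert X - c_j\rVert^2 \Bigr),
\]
so that $\C{R}_2(c) = 2\sigma^2\bigl( 1 + \B{P}_W(\min_j \langle \theta_j, W\rangle)\bigr)$ and $\oC{R}_2(c) = 2\sigma^2\bigl( 1 + \oB{P}_W(\min_j \langle\theta_j, W\rangle)\bigr)$; the additive constant $2\sigma^2$ cancels in every difference appearing in the statement.

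It then remains to substitute these data into the conclusions of Lemma~\ref{lem:14.1} and multiply through by $2\sigma^2$. The only arithmetic to check is that, with the present values,
\[
\lVert \Theta \rVert \, \lVert W \rVert_\infty = \sqrt{k}, \qquad k(b-a)^2 + 2\log(ek)\,\lVert W \rVert_\infty^2 \lVert \Theta \rVert^2 = k + 2k\log(ek) = k\bigl( 3 + 2\log(k)\bigr),
\]
which turns the first conclusion of Lemma~\ref{lem:14.1} into the first displayed bound, its second conclusion (with $\theta^* = (\Phi(c^*_j))_{j=1}^k$) into the excess-risk bound, and its third conclusion into the $\epsilon$-minimizer and in-expectation bounds. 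For the last two I would note that an $\epsilon$-minimizer $\wh{c}$ of $\oC{R}_2$ yields $\wh\theta = (\Phi(\wh c_j))_{j=1}^k$, an $(\epsilon/(2\sigma^2))$-minimizer of the empirical linear criterion over $\Theta$; that Proposition~\ref{prop:exists}, applicable since $\lVert W\rVert_\infty < \infty$ and $\Theta$ is weakly compact, supplies the minimizing $\theta^*$ used in the excess-risk step; and that weak continuity of $\theta \mapsto \B{P}_W(\min_j\langle\theta_j,W\rangle)$ on $\Theta$, combined with $\C{R}_2(c) = 2\sigma^2(1 + \B{P}_W(\min_j\langle\Phi(c_j),W\rangle))$, identifies $\inf_{\theta\in\Theta}\B{P}_W(\min_j\langle\theta_j,W\rangle)$ with $(2\sigma^2)^{-1}\inf_{c\in H^k}\C{R}_2(c) - 1$; multiplying back by $2\sigma^2$ restores the additive $\epsilon$.

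The substitution is mechanical, so the main obstacle is the careful handling of the kernel trick in possibly infinite dimension: checking that $K_\sigma$ is positive definite on all of $H$, that its RKHS is separable with a Borel-measurable, unit-norm feature map, and that passing to the weak closure $\Theta$ neither enlarges the range $[a,b]$ of the bracket nor alters the infimum of the linear risk. Once these measure-theoretic points are settled, the values $\lVert W\rVert_\infty = 1$, $\lVert \Theta\rVert = \sqrt k$, $b-a = 1$ make the bound of Lemma~\ref{lem:14.1} collapse exactly onto the asserted expressions; I would also read $\B{R}^{d\times k}$ in the statement as $H^k$, the argument being unchanged.
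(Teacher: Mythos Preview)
Your proof is correct and follows essentially the same route as the paper: rewrite $\C{R}_2$ as a linear $k$-means criterion via the Gaussian-kernel feature map, read off $\lVert W\rVert_\infty = 1$, $\lVert \Theta\rVert = \sqrt{k}$, $b-a = 1$, and plug into Lemma~\ref{lem:14.1}, the arithmetic $k + 2k\log(ek) = k(3 + 2\log k)$ being exactly what the paper uses. The paper is terser (it simply invokes Aronszajn's theorem and the separability result of \cite{christmann} rather than spelling out positive definiteness or taking a weak closure), but your extra care with measurability and the closure of $\Theta$ is harmless and the substance is identical.
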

As we can see, the robust criterion has a scale parameter $\sigma$,
that allows to remove all integrability conditions
on the sample distribution or boundedness assumptions on 
the centers.

\begin{proof}
According to the Aronszajn theorem \cite{aronszajn1950theory}, there exists a mapping 
$\Psi : H \rightarrow \C{H}$ such that
\[
\exp \Bigl( - \frac{1}{2 \sigma^2} \lVert x - y \rVert^2 \Bigr) = 
\langle \Psi(x), \Psi(y) \rangle_{\C{H}}, \quad x, y \in H.
\] 
Moreover the reproducing kernel Hilbert space $\C{H}$, being 
based on a continuous kernel defined on a separable topological 
space, is separable according to 
\cite[lemma 4.33 page 130]{christmann}.
We can express the risk as 
\begin{align*} 
\C{R}_2( c ) =
2 \sigma^2 \Bigl[ 1 + \B{P}_W \Bigl( 
\min_{j \in \dlb 1, k \drb} \langle 
- \theta_j, W \rangle_{\C{H}} \Bigr) \Bigr],
\end{align*}
where $\theta_j=\Psi(c_j)$ and $W=\Psi(X)$. 
The proof then follows from Lemma \vref{lem:14.1},
taking into account that $\theta_j$ and $W$ belong to the unit ball 
of $\C{H}$, 
so that 
$ \lVert W \rVert_{\infty} = 1$ and $ \lVert 
\Theta \rVert = \sqrt{k}$. 
\end{proof}

\section{Generalization bounds for the information $k$-means criterion}

In order to apply Lemma \vref{lem:14.1} and obtain a generalization 
bound, 
we are going to linearize the information $k$-means algorithm
presented in section \vref{section:7},
using the kernel trick. 

Let us introduce the separable Hilbert space $H = \{ (f, x) \in \B{L}^2(\nu) 
\times \B{R} \}$ equipped with the inner-product
\[
\langle h,h'\rangle=\langle h_1,h'_{1} \rangle_{\B{L}^2(\nu) } 
+\mu \, h_2 \, h'_{2}, \qquad h=(h_1, h_2), \; h'=(h_1', h_2') \in H,
\]
where $\mu > 0$ is a positive real parameter to be chosen 
afterwards.
The associated norm is 
\[
\lVert (h_1, h_2) \rVert =  \sqrt{\langle h_1,h_{1} 
\rangle_{\B{L}^2(\nu) } + \mu \, h^2_{2}}=
\sqrt{\int h_1^{2} \, \ud \nu + \mu \,  h^2_{2}}\,, 
\qquad h=(h_1,h_2)\in H. 
\]
Define for any constant $B \in \B{R}_+$
\[ 
\Theta_B =  \Bigl\{ \bigl( q, \C{K}(q, 1) \bigr) \, : \, 
q \in \B{L}^1_{+,1}(\nu) \cap \B{L}^2 ( \nu ), \;  
\tint q^2 \, \ud \nu \leq B^2
\Bigr\} \subset H, 
\]
this definition being justified by the fact that 
\begin{equation}
\label{eq:3.3}
\C{K}(q,1) = \int q \log(q) \, \ud \nu \leq \log \Bigl( \int q^2 \, 
\ud \nu \Bigr) < + \infty
\end{equation}
whenever $\int q^2 \, \ud \nu < + \infty$. 
\begin{lemma}
\label{lem:1.7}
Assume that $\ds \ess \sup_{X} \int \log(p_X)^2 \, \ud \nu < \infty$ 
and $\ds \ess \sup_X \int p_X^2 \, \ud \nu < \infty$. 
Remark first that the smallest information ball containing the 
support of $\B{P}_{\ds p_X}$ has an information radius 
\begin{multline*}
\inf_{q \in \B{L}_{+,1}^1(\nu)} \ess \sup_X \C{K}(q, p_X) 
\leq \ess \sup_X \C{K}(1, p_X) \\ 
= \ess \sup_X \int \log \bigl(p_X^{-1} \bigr) \, \ud \nu 
\leq \ess \sup_X \biggl( \int \log(p_X)^2 \, \ud \nu \biggr)^{1/2} < \infty.
\end{multline*}
Define $\ds B  = \ess \sup_X \biggl( \int p_X^2 \, \ud \nu  \biggr)^{1/2}
\exp \Biggl[ 
\inf_{q \in \B{L}_{+, 1}^1(\nu)} \ess \sup_X \C{K} (q, p_X) \Biggr]
< \infty$ 
and consider the random variable  
\[
W = \bigl( - \log(p_X), \mu^{-1} \bigr) \in H.
\]
The following two minimization problems are equivalent 
\[ 
\inf_{q \in \big(\B{L}_{+,1}^1(\nu)\big)^k} \B{P}_X \, \Bigl( \min_{j \in \dlb 1, k 
\drb} \C{K}(q_j, p_X) \Bigr) = \inf_{\theta \in \Theta_B^k} \B{P}_W   
\Bigl( \, \min_{j \in \dlb 1, k \drb} \langle \theta_j, W \rangle_H \Bigr).
\] 
\end{lemma}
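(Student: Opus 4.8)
The crux is a \emph{kernel identity}. For $\theta = \bigl( q, \C{K}(q,1) \bigr)$ with $q \in \B{L}^1_{+,1}(\nu) \cap \B{L}^2(\nu)$ and $W = \bigl( - \log(p_X), \mu^{-1} \bigr)$, the definition of the inner product of $H$ gives
\[
\langle \theta, W \rangle_H = - \int q \log(p_X) \, \ud \nu + \mu \, \C{K}(q,1) \, \mu^{-1} = \int q \log\bigl( q / p_X \bigr) \, \ud \nu = \C{K}(q, p_X).
\]
The plan is first to check that all of this is well posed: the hypothesis $\ess \sup_X \int \log(p_X)^2 \, \ud \nu < \infty$ shows that $\log(p_X) \in \B{L}^2(\nu)$ for $\B{P}_X$-almost every $X$, hence $W \in H$ almost surely (measurability of $X \mapsto W$ being routine) and $p_X > 0$ $\nu$-almost everywhere; combined with $q \in \B{L}^2(\nu)$, the Cauchy--Schwarz inequality, the bound $x \log x \geq - 1/e$ and \eqref{eq:3.3}, all three expressions above are finite and equal. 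Consequently, for every $\theta \in \Theta_B^k$ we get $\B{P}_W\bigl( \min_j \langle \theta_j, W \rangle_H \bigr) = \B{P}_X\bigl( \min_j \C{K}(q_j, p_X) \bigr)$; since each $q_j$ lies in $\B{L}^1_{+,1}(\nu)$, taking the infimum over $\theta \in \Theta_B^k$ yields the inequality ``$\geq$'' between the two minimisation problems.

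For the reverse inequality I would invoke Proposition \ref{prop:1.5}, which rewrites the left-hand side as $\inf_{\ell} \B{P}_X\bigl( \C{K}(q^{\star,\ell}_{\ell(X)}, p_X) \bigr)$, the infimum running over measurable $\ell : \C{X} \to \dlb 1, k \drb$ and $q^{\star,\ell}_j$ denoting the geometric mean of $p_X$ over the cluster $\{\ell(X) = j\}$. The key point is that $\theta^{\star,\ell}_j := \bigl( q^{\star,\ell}_j, \C{K}(q^{\star,\ell}_j, 1) \bigr)$ can be taken in $\Theta_B$ for every $\ell$ and every $j$. If $\B{P}_X(\ell(X) = j) = 0$ we are free to set $q^{\star,\ell}_j \equiv 1$, which belongs to $\Theta_B$ because $\int 1 \, \ud \nu = 1 \leq B^2$ (note $B \geq 1$, since $\int p_X^2 \, \ud \nu \geq 1$ by Jensen's inequality and the exponent in the definition of $B$ is non negative). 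If $\B{P}_X(\ell(X) = j) > 0$, then $Z_j > 0$ and, exactly as in the proof of Lemma \ref{lem:1.6}, Jensen's inequality and the Fubini--Tonelli theorem give
\[
\int \bigl( q^{\star,\ell}_j \bigr)^2 \, \ud \nu = Z_j^{-2} \int \exp\Bigl\{ 2 \B{P}_{X \, | \, \ell(X) = j}\bigl[ \log(p_X) \bigr] \Bigr\} \, \ud \nu \leq Z_j^{-2} \, \ess \sup_X \int p_X^2 \, \ud \nu.
\]

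It then remains to bound $Z_j$ from below. Applying Lemma \ref{lem:1.4} to $\B{P}_{X, \, Y \, | \, \ell(X) = j}$ identifies $\log(Z_j^{-1})$ with $\B{P}_{X \, | \, \ell(X) = j}\bigl[ \C{K}(q^{\star,\ell}_j, p_X) \bigr] = \inf_{q \in \B{L}^1_{+,1}(\nu)} \B{P}_{X \, | \, \ell(X) = j}\bigl[ \C{K}(q, p_X) \bigr]$; since averaging over the cluster is dominated, term by term in $q$, by the essential supremum over all of $\C{X}$, this is at most the information radius $I := \inf_{q \in \B{L}^1_{+,1}(\nu)} \ess \sup_X \C{K}(q, p_X)$, which is finite by the first displayed inequality in the statement of the lemma. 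Hence $Z_j^{-2} \leq e^{2I}$, so that $\int \bigl( q^{\star,\ell}_j \bigr)^2 \, \ud \nu \leq e^{2I} \, \ess \sup_X \int p_X^2 \, \ud \nu = B^2$ and $\theta^{\star,\ell}_j \in \Theta_B$. Using the kernel identity once more,
\[
\B{P}_X\bigl( \C{K}(q^{\star,\ell}_{\ell(X)}, p_X) \bigr) = \B{P}_W\bigl( \langle \theta^{\star,\ell}_{\ell(X)}, W \rangle_H \bigr) \geq \B{P}_W\bigl( \min_j \langle \theta^{\star,\ell}_j, W \rangle_H \bigr) \geq \inf_{\theta \in \Theta_B^k} \B{P}_W\bigl( \min_j \langle \theta_j, W \rangle_H \bigr),
\]
and taking the infimum over $\ell$ gives ``$\leq$'', which completes the proof. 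The main obstacle is exactly this lower bound on $Z_j$: one has to recognise $\log(Z_j^{-1})$ as a cluster-conditional minimal Kullback divergence and control it by the global information radius, while also cleanly disposing of the degenerate empty-cluster case; once this is in hand, the rest is a bookkeeping application of the kernel trick together with Lemmas \ref{lem:1.4}, \ref{lem:1.6} and Proposition \ref{prop:1.5}.
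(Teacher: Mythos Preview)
Your proof is correct and follows essentially the same route as the paper: linearise via the kernel identity $\langle (q,\C{K}(q,1)), W\rangle_H = \C{K}(q,p_X)$, then use Proposition~\ref{prop:1.5} and show that the optimal centers $q^{\star,\ell}_j$ satisfy $\int (q^{\star,\ell}_j)^2 \,\ud\nu \le B^2$ by combining the Jensen bound from Lemma~\ref{lem:1.6} with $Z_j^{-1} \le \exp(I)$. The only cosmetic difference is that the paper re-derives the lower bound on $Z_j$ by a direct Jensen computation on $Z_j = \int \exp\{\B{P}_{X\,|\,\ell(X)=j}[\log p_X]\}\,\ud\nu$, whereas you obtain it by invoking Lemma~\ref{lem:1.4}; your handling of the empty-cluster case is also made more explicit than in the paper.
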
 
\begin{proof}
Let $\ds B' = \ess \sup_{X} \biggl( \int p_X^2 \, \ud \nu \biggr)^{1/2}$ 
and $\ds C = \ess \sup_{X} \biggl( \int \log(p_X)^2 \, \ud \nu \biggr)^{1/2}$.
First let us remark that under the hypothesis of the lemma, the information $k$-means criterion is finite.
Indeed,
\begin{multline*} 
\inf_{q \in \bigl( \B{L}_{+,1}^1(\nu) \bigr)^k} 
\B{P}_X \Bigl( \, \min_{j \in \dlb 1, k \drb} 
\C{K} \bigl( q_j, p_X \bigr) \Bigr) \leq \B{P}_X \Bigl( \C{K}(1, p_X) \Bigr) 
\\ = \B{P}_X \biggl( \int \log \bigl( p_X^{-1} \bigr) \, \ud \nu \biggr)  
\leq C < + \infty. 
\end{multline*}
Now, for any measurable classification function 
$\ell:\C{X}\mapsto \dlb 1, k \drb$ for which the criterion is finite,
we know from Lemma \vref{lem:1.6} that $q^{\star, \ell}_j \in 
\B{L}^2(\nu)$ and we can remark that
\[
\C{K}\bigl(q^{\star,\ell}_j, p_X \bigr)=\langle \theta^{\star,\ell}_j, 
W \rangle_H
\]
where $\theta^{\star, \ell}_j =\bigl(q^{\star,\ell}_j,
\C{K}(q^{\star,\ell}_j,1)\bigr)$. Remark that this definition 
is justified by the fact that
\[
\C{K}(q, 1) = \int q \log(q) \, \ud \nu \leq 
\log \biggl( \int q^2 \, \ud \nu \biggr),
\] 
so that $q^{\star, \ell}_j \in \B{L}^2(\nu)$ implies 
that $\C{K} \bigl( q^{\star, \ell}_j, 1 \bigr) < + \infty$.
So, it is sufficient to conclude the proof to show that $\theta^{\star,\ell}_j
\in \Theta_B$. 
As in the proof of Lemma
\vref{lem:1.6},
\[
\int (q^{\star, \ell}_{j})^2 \, \ud \nu \leq Z_{j}^{-2} \; \B{P}_{X \,
| \, \ell(X) = j}  \biggl( \underbrace{
\int \rule[-5mm]{0mm}{3mm} p_X^2 \, \ud \nu}_{\leq {B'}^2} \biggr)
\leq Z_{j}^{-2} \, {B'}^2, \qquad j \in \dlb 1, k \drb.
\]
By Jensen's inequality, for any $j \in \dlb 1, k \drb$,
\begin{multline*}
Z_{j} = 
\sup_{q \in \B{L}_{+,1}^1(\nu)} 
\int q \exp \biggl\{ \B{P}_{X \, | \, \ell(X) = j} 
 \Bigl[ \log (p_X / q) \Bigr] \biggr\} \, \ud \nu \\
\geq 
\sup_{q \in \B{L}_{+,1}^1(\nu)} 
\exp \biggl\{ \B{P}_{X \, | \, \ell(X) = j}  \biggl[ \int q \log(p_X / q) 
\, \ud \nu \biggr]  \biggr\} 
\\ = \exp \Bigl\{ - \inf_{q \in \B{L}_{+,1}^1(\nu)} \B{P}_{X \, | \,
\ell(X) = j}  \Bigl[ 
\C{K} \bigl( q, p_X \bigr) \Bigr] 
\Bigr\} .
\end{multline*}
Hence
\[ 
Z_{j}^{-1}  \leq 
\exp \Bigl\{ \inf_{q \in \B{L}_{+,1}^1(\nu)} \ess \sup_X 
\C{K}(q,p_X) \Bigr\} \leq \exp ( C ).  
\] 
Therefore
\[ 	
\biggl( \int (q^{\star,\ell}_{j})^2 \, \ud \nu \biggr)^{1/2} \leq B' 
\exp \Bigl[ \inf_{q \in \B{L}_{+,1}^1(\nu)} \ess \sup_{X} \C{K}(q, p_X) 
\Bigr] = B \leq B' \exp(C) < \infty,
\] 
proving that $B < \infty$ and that 
$\theta^{\star,\ell}_j = 
\bigl( q^{\star,\ell}_{j}, \C{K}(q^{\star,\ell}_{j}, 1) \bigr) \in \Theta_B$, which concludes the proof. 
\end{proof}

\begin{prop}
\label{prop:7.10} 
Under the hypotheses of the previous lemma
there exists an optimal quantizer $\theta^\star \in \Theta_B^k$ minimizing the $k$-means risk,
that is such that
\[
\B{E}\Bigl( \, \min_{j \in \dlb 1, k \drb} \langle \theta^{\star}_j, W \rangle \Bigr)=
\inf_{\theta \in \Theta_B^k} \B{E}\Bigl( \, \min_{j \in \dlb 1, k \drb} \langle \theta_j, W \rangle \Bigr).
\]
\end{prop}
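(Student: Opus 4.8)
The plan is to mimic the weak-compactness argument of Proposition~\ref{prop:exists}, the only new wrinkle being that $\Theta_B$, though bounded, is in general \emph{not} weakly closed, so a weak accumulation point of a minimising sequence must be ``projected'' back onto $\Theta_B$. First I would check that $\Theta_B$ is bounded in $H$: if $\theta_j=(q_j,\C{K}(q_j,1))\in\Theta_B$, then $\int q_j\,\ud\nu=1$ forces $\int q_j^2\,\ud\nu\geq 1$, hence $B\geq 1$, while \eqref{eq:3.3} together with Jensen's inequality gives $0\leq\C{K}(q_j,1)\leq 2\log(B)$; thus $\lVert\theta_j\rVert^2=\int q_j^2\,\ud\nu+\mu\,\C{K}(q_j,1)^2\leq B^2+4\mu\log(B)^2$, so $\Theta_B^k$ is a bounded subset of the Hilbert space $H^k$. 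Under the hypotheses of Lemma~\ref{lem:1.7} we moreover have $\lVert W\rVert_\infty<\infty$, since $\lVert W\rVert^2=\int\log(p_X)^2\,\ud\nu+\mu^{-1}$, and $W$ takes its values in $H$ almost surely. Pick a minimising sequence $\theta^{(n)}\in\Theta_B^k$; being bounded it has a subsequence (not relabelled) converging weakly in $H^k$ to some $\theta^\star\in H^k$. Exactly as in the proof of Proposition~\ref{prop:exists}, for $\B{P}_W$-almost every $W\in H$ the map $\theta\mapsto\min_{j\in\dlb 1,k\drb}\langle\theta_j,W\rangle$ is weakly continuous and bounded in absolute value by $\sup_n\lVert\theta^{(n)}\rVert\,\lVert W\rVert_\infty$; dominated convergence then yields $\B{E}\bigl(\min_j\langle\theta^\star_j,W\rangle\bigr)=\lim_n\B{E}\bigl(\min_j\langle\theta^{(n)}_j,W\rangle\bigr)=\inf_{\theta\in\Theta_B^k}\B{E}\bigl(\min_j\langle\theta_j,W\rangle\bigr)$.

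Next I would identify $\theta^\star$. Writing $\theta^{(n)}_j=(q^{(n)}_j,\C{K}(q^{(n)}_j,1))$ and $\theta^\star_j=(q^\star_j,t^\star_j)$, testing weak convergence against $(g,0)$ for $g\in\B{L}^2(\nu)$ and against $(0,1)$ shows that $q^{(n)}_j\rightharpoonup q^\star_j$ in $\B{L}^2(\nu)$ and $\C{K}(q^{(n)}_j,1)\to t^\star_j$. Since $\{g\in\B{L}^2(\nu):g\geq 0\}$ is a closed convex cone, hence weakly closed, $q^\star_j\geq 0$; testing against the constant $\B{1}\in\B{L}^2(\nu)$ gives $\int q^\star_j\,\ud\nu=1$; and $\int(q^\star_j)^2\,\ud\nu\leq\liminf_n\int(q^{(n)}_j)^2\,\ud\nu\leq B^2$ by weak lower semicontinuity of the norm. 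Finally, the negative-entropy functional $q\mapsto\C{K}(q,1)=\sup\bigl\{\int qg\,\ud\nu-\log\int e^{g}\,\ud\nu:g\in\B{L}^\infty(\nu)\bigr\}$ (Donsker--Varadhan) is a supremum of weakly continuous affine functionals of $q\in\B{L}^2(\nu)$, hence convex and weakly lower semicontinuous, so $\C{K}(q^\star_j,1)\leq\liminf_n\C{K}(q^{(n)}_j,1)=t^\star_j$. Thus $q^\star=(q^\star_1,\dots,q^\star_k)$ satisfies every constraint defining $\Theta_B$, but one may have $t^\star_j>\C{K}(q^\star_j,1)$, so that $\theta^\star\notin\Theta_B^k$ in general.

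To conclude, I would perform the projection step: set $\wt{\theta}_j=\bigl(q^\star_j,\C{K}(q^\star_j,1)\bigr)$, so that $\wt{\theta}\in\Theta_B^k$. For every value $W=(-\log(p_X),\mu^{-1})\in H$ one has $\langle\wt{\theta}_j,W\rangle-\langle\theta^\star_j,W\rangle=\C{K}(q^\star_j,1)-t^\star_j\leq 0$, whence $\min_j\langle\wt{\theta}_j,W\rangle\leq\min_j\langle\theta^\star_j,W\rangle$ $\B{P}_W$-almost surely and therefore $\B{E}\bigl(\min_j\langle\wt{\theta}_j,W\rangle\bigr)\leq\B{E}\bigl(\min_j\langle\theta^\star_j,W\rangle\bigr)=\inf_{\theta\in\Theta_B^k}\B{E}\bigl(\min_j\langle\theta_j,W\rangle\bigr)$. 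Since $\wt{\theta}\in\Theta_B^k$ this must be an equality, and $\wt{\theta}$ is the required optimal quantizer. The main obstacle is exactly the one handled by this last step: because the entropy is only weakly lower semicontinuous, not weakly continuous, $\Theta_B$ fails to be weakly closed, so one cannot invoke weak compactness of $\Theta_B^k$ directly as in Proposition~\ref{prop:exists}; instead one combines weak compactness of an enclosing ball with the monotonicity of $\theta_j\mapsto\langle\theta_j,W\rangle$ in its last coordinate.
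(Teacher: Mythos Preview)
Your proof is correct and follows essentially the same route as the paper's: both arguments use weak compactness of a bounded set in $H^k$ to obtain a minimiser in the weak closure, invoke the Donsker--Varadhan representation to get weak lower semicontinuity of $q\mapsto\C{K}(q,1)$ (hence the weak limit lies in the epigraph), and then exploit that the second coordinate of $W$ equals $\mu^{-1}>0$ to decrease the risk by lowering $t^\star_j$ to $\C{K}(q^\star_j,1)$, landing back in $\Theta_B^k$. The only cosmetic difference is that the paper packages the weak-compactness step by citing Proposition~\ref{prop:exists} directly, whereas you reprove it via a minimising sequence; you are also more explicit than the paper in checking that the weak limit $q^\star_j$ still satisfies $q^\star_j\geq 0$, $\int q^\star_j\,\ud\nu=1$, and $\int (q^\star_j)^2\,\ud\nu\leq B^2$.
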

\begin{proof}
Note that 
\[
\lVert \Theta_B \rVert = \sup_{\theta \in \Theta_B} \lVert \theta \rVert
\leq \sqrt{ B^2 + \mu \log(B^2)^2} < + \infty,
\] 
according to equation \myeq{eq:3.3}. Therefore $\Theta_B$ is 
bounded. Applying Proposition \vref{prop:exists} to $\Theta_B^k$,
we find $\wt{\theta} \in \ov{\Theta_B^k}$, the weak closure
of $\Theta_B^k$, such that
\[
\C{R}(\wt{\theta}) \overset{\text{\rm def}}{=} \B{P}_W \Bigl( 
\min_{j \in \dlb 1, \, k \drb} \langle \wt{\theta}_j, W \rangle \Bigr)
= \inf_{\theta \in \Theta_B^k} \C{R}(\theta).
\] 
Remark now that, since, 
according to the Donsker Varadhan representation,
\[
\C{K}(q,1) = \sup_{h \in \B{L}^2(\nu)} \int h q \, \ud \nu - 
\log \biggl( \int \exp(h) \, \ud \nu \biggr),
\]
the function $q \mapsto \C{K}(q,1)$ defined on $\B{L}^2(\nu) \cap 
\B{L}_{+,1}^1(\nu)$
is 
weakly lower semicontinuous. Indeed, it is a supremum of 
weakly continuous function. Accordingly, its epigraph is weakly 
closed. As $\Theta_B$ belongs to this epigraph, its weak closer 
also belongs to it. This implies that for each $j \in \dlb 1, k \drb$,
$\wt{\theta}_j$ belongs to it, 
so that $\wt{\theta} = \bigl( (q_j , y_j), 
j \in \dlb 1, k \drb \bigr)$, where $y_j \geq \C{K}(q_j, 1)$.
Indeed the weak closure of $\Theta_B^k$ is the product 
$\ov{\Theta}_B^k$ of $k$ times the weak closure of $\Theta_B$.
Let us put $\theta^* = \Bigl( \bigl(q_j, \C{K}(q_j,1) \bigr), 
j \in \dlb 1, k \drb \Bigr)$.
By monotonicity of $\C{R}$ with respect to $y_j$, the corresponding coefficient 
of $W$ being positive,
\[
\inf_{\theta \in \Theta_B^k} \C{R}(\theta) =  
\C{R}(\wt{\theta}) \geq \C{R}(\theta^*). 
\]
Since $\theta^* \in \Theta_B^k$, the reverse inequality also 
holds and $\ds \C{R}(\theta^*) = \inf_{\theta \in \Theta_B^k}
\C{R}(\theta)$. 
\end{proof}

The link we just made between the information $k$-means criterion and 
the linear $k$-means criterion allows us to apply Lemma \vref{lem:14.1},
proving the next proposition.

\begin{prop}
Assume that 
\[ 
\ess \sup_X \biggl( \int p_X^2 \, \ud \nu \biggr) < + \infty 
\quad \text{ and } \quad \ess \sup_X \biggl( \int \log ( p_X )^2 \, \ud \nu 
\biggr) < + \infty. 
\] 
Consider the information radius 
\[
R = \inf_{q \in \B{L}_{+,1}^1( \nu )} \ess \sup_{X} \C{K} \bigl(
q, p_X \bigr) 
\]
and the bounds
\begin{align*}
B & = \ess \sup_X \biggl( \int p_X^2 \, \ud \nu \biggr)^{1/2} \exp ( R )\\
\text{and } C & = \ess \sup_X \biggl( \int \log(p_X)^2 \, 
\ud \nu \biggr)^{1/2}. 
\end{align*}
Introduce the parameter space
\[
\C{Q}_B=\Bigl\{ q\in \B{L}^1_{+,1}(\nu) \cap \B{L}^2 ( \nu ) \, : \,
\tint q^2 \, \ud \nu \leq B^2 \Bigr\}.
\]
Given $(X_1, \dots, X_n)$, a sample made of $n$ independent copies of $X$,
with probability at least $1 - \delta$, for any $q \in \C{Q}_B^k$,
\begin{multline*}
\bigl( \B{P}_X - \oB{P}_X \bigr) \Bigl( \min_{j \in \dlb 1, k \drb} 
\C{K}(q_j, p_X) 
\Bigr) 
\\ \shoveleft{ \qquad \leq 
\Biggl( \frac{\log(n/k)}{\log(2)} \sqrt{\frac{8 \, k \log(k)}{n}} + 
2 \sqrt{\frac{k \log(k)}{n}}} 
\\ + 
\sqrt{\frac{(\sqrt{2} + 1 ) k \bigl( 3 + 2 \log(k) 
\bigr) }{n}} + \sqrt{\frac{\log(\delta^{-1})}{2n}}
\; \Biggr) \bigl( BC + 2 \log(B) \bigr).
\end{multline*}
For some $\epsilon \geq 0$, consider an empirical $\epsilon$-minimizer $\wh{q}\,(X_1, \dots, X_n) \in \C{Q}_B^k$
satisfying
\[
\oB{P}_X \Bigl( \min_{j \in \dlb 1, k \drb} \C{K} \bigl( \wh{q}_j, p_X \bigr) \Bigr) \leq
\inf_{q \in \C{Q}_B^k} 
\oB{P}_X \Bigl( \min_{j \in \dlb 1, k \drb} \C{K} \bigl( q_j, p_X \bigr) \Bigr)
+ \epsilon.
\]
For any $\delta \in ]0,1[$, with probability at least $1 - \delta$,
\begin{multline*}
\B{P}_X \Bigl( \min_{j \in \dlb 1, k \drb} \C{K} \bigl( \wh{q}, p_X \bigr) 
\Bigr) \leq \inf_{q \in \bigl( \B{L}_{+,1}^1(\nu)
\bigr)^k} \B{P}_X \Bigl( \min_{j \in \dlb 1, 
k \drb} \C{K}\bigl(q_j, p_X \bigr) \Bigr) \\ 
\shoveleft{ \qquad +
\Biggl( \frac{\log(n/k)}{\log(2)} \sqrt{\frac{8 \, k \log(k)}{n}} + 
2 \sqrt{\frac{k \log(k)}{n}}} 
\\ + \sqrt{\frac{(\sqrt{2} + 1 ) \, k \, \bigl( 3 + 2 \log(k) 
\bigr) }{n}} + 
\sqrt{\frac{2\log(\delta^{-1})}{n}} \; \Biggr) \bigl(
BC + 2 \log(B) \bigr) + \epsilon.
\end{multline*}
Moreover, in expectation, 
\begin{multline*}
\B{P}_{X_1, \dots, X_n} \Bigl[ \B{P}_X \Bigl( \min_{j \in \dlb 1, k \drb} \C{K} \bigl( \wh{q}, p_X \bigr) 
\Bigr) \Bigr] \leq \inf_{q \in \bigl( \B{L}_{+,1}^1(\nu)
\bigr)^k} \B{P}_X \Bigl( \min_{j \in \dlb 1, 
k \drb} \C{K}\bigl(q_j, p_X \bigr) \Bigr) \\ 
\shoveleft{ \qquad +
\Biggl( \frac{\log(n/k)}{\log(2)} \sqrt{\frac{8 \, k \log(k)}{n}} + 
2 \sqrt{\frac{k \log(k)}{n}}} 
\\ + \sqrt{\frac{(\sqrt{2} + 1 ) \, k \, \bigl( 3 + 2 \log(k) 
\bigr) }{n}} 
\; \Biggr) \bigl(
BC + 2 \log(B) \bigr) + \epsilon.
\end{multline*}
\end{prop}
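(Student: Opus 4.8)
The plan is to transfer the linear $k$-means bound of Lemma \vref{lem:14.1} through the isometric embedding constructed in Lemma \vref{lem:1.7}. First I would fix the free parameter of the Hilbert space $H = \B{L}^2(\nu) \times \B{R}$ by choosing $\mu = B \bigl( 2 C \log(B) \bigr)^{-1}$, the degenerate situations $B = 1$ or $C = 0$ being recovered by letting $\mu \to + \infty$. Setting $W = \bigl( - \log(p_X), \mu^{-1} \bigr)$ and, for $q \in \C{Q}_B$, associating the parameter $\theta = \bigl( q, \C{K}(q,1) \bigr) \in \Theta_B$, one notes that $p_X > 0$ $\nu$-almost everywhere (since $\int \log(p_X)^2 \, \ud \nu < \infty$) and $\int q \lvert \log(p_X) \rvert \, \ud \nu < \infty$, so $\C{K}(q, p_X)$ is finite and a direct computation gives the pointwise identity $\langle \theta, W \rangle_H = \C{K}(q, p_X)$. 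Hence for every $q \in \C{Q}_B^k$ the information $k$-means loss $\min_{j} \C{K}(q_j, p_X)$ coincides almost surely with the linear loss $\min_j \langle \theta_j, W \rangle_H$, so that $\B{P}_X \bigl( \min_j \C{K}(q_j, p_X) \bigr) = \B{P}_W \bigl( \min_j \langle \theta_j, W \rangle_H \bigr)$ and likewise with $\oB{P}_X$ in place of $\B{P}_X$.

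Next I would compute the quantities entering Lemma \vref{lem:14.1} with $\Theta = \Theta_B^k$. One has $\lVert W \rVert_{\infty} = \bigl( C^2 + \mu^{-1} \bigr)^{1/2}$, and, from \myeq{eq:3.3} and the nonnegativity of $\C{K}(q,1)$, every $\theta = (q, \C{K}(q,1))$ with $q \in \C{Q}_B$ satisfies $\lVert \theta \rVert^2 = \int q^2 \, \ud \nu + \mu \, \C{K}(q,1)^2 \leq B^2 + \mu \log(B^2)^2$, whence $\lVert \Theta_B^k \rVert \leq \bigl( k ( B^2 + \mu \log(B^2)^2 ) \bigr)^{1/2}$. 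The step I expect to be the main obstacle is the algebraic verification that, with the above choice of $\mu$, an arithmetic--geometric mean inequality produces the clean bound
\[
\lVert W \rVert_{\infty} \, \lVert \Theta_B^k \rVert \leq \sqrt{k} \, \bigl( BC + 2 \log(B) \bigr),
\]
equality holding exactly at the optimal $\mu$. For the range of the loss I would use that the Kullback divergence is nonnegative, giving $\min_j \C{K}(q_j, p_X) \geq 0$, and that $\int q_j \log(q_j) \, \ud \nu \leq \log(B^2)$ by \myeq{eq:3.3} together with $\int q_j \lvert \log(p_X) \rvert \, \ud \nu \leq BC$ by Cauchy--Schwarz, giving $\min_j \C{K}(q_j, p_X) \leq BC + 2 \log(B)$ almost surely; so one may take $a = 0$ and $b = BC + 2 \log(B)$, hence $b - a = BC + 2 \log(B)$.

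With these identifications, and assuming $k \geq 2$, $n \geq 2k$ as required by Lemma \vref{lem:14.1}, the first inequality of the proposition is the first inequality of that lemma applied to $W$ and $\Theta_B^k$, after substituting $\lVert W \rVert_{\infty} \lVert \Theta_B^k \rVert \leq \sqrt{k}(BC + 2 \log B)$ and simplifying
\[
k (b-a)^2 + 2 \log(ek) \, \lVert W \rVert_{\infty}^2 \lVert \Theta_B^k \rVert^2 \leq k (b-a)^2 \bigl( 1 + 2 \log(ek) \bigr) = k \bigl( BC + 2 \log B \bigr)^2 \bigl( 3 + 2 \log(k) \bigr).
\]
For the two remaining displays, which concern an empirical $\epsilon$-minimizer $\wh{q} \in \C{Q}_B^k$, I would invoke Proposition \vref{prop:7.10}, which provides a minimizer $\theta^{\star} \in \Theta_B^k$ of the linear criterion over $\Theta_B^k$ and hence, through Lemma \vref{lem:1.7}, the equality $\inf_{q \in \C{Q}_B^k} \B{P}_X \bigl( \min_j \C{K}(q_j, p_X) \bigr) = \inf_{q \in (\B{L}^1_{+,1}(\nu))^k} \B{P}_X \bigl( \min_j \C{K}(q_j, p_X) \bigr)$. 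Reading the third part (excess risk of an $\epsilon$-minimizer) and the fourth part (excess risk in expectation) of Lemma \vref{lem:14.1} through the same dictionary then yields the last two bounds, the factor $b - a = BC + 2 \log B$ being responsible for the common multiplicative term $\bigl( BC + 2 \log(B) \bigr)$.
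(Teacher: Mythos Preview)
Your proposal is correct and follows essentially the same route as the paper: embed via Lemma \vref{lem:1.7}, choose $\mu = B\bigl(2C\log(B)\bigr)^{-1}$ (which coincides with the paper's $\mu = B C^{-1}\log(B^2)^{-1}$), compute $\lVert W\rVert_\infty^2\lVert\Theta_B^k\rVert^2\le k(BC+2\log B)^2$, take $[a,b]=[0,BC+2\log B]$, and read off Lemma \vref{lem:14.1}. The only cosmetic difference is that the paper obtains the upper range bound $b$ via $\langle\theta_j,W\rangle\le\lVert\theta_j\rVert\lVert W\rVert$ in $H$, whereas you split $\C{K}(q_j,p_X)=\C{K}(q_j,1)-\int q_j\log p_X\,\ud\nu$ and bound the two pieces separately; both give $BC+2\log B$.
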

\begin{proof}
Apply Lemma \vref{lem:1.7}.
Note that, choosing $\mu = B C^{-1} \log(B^2)^{-1}$,
we get 
\[
\lVert \Theta_B^k \rVert^2 \lVert W \rVert_{\infty}^2 
\leq k \bigl( B^2 + \mu \log(B^2)^2 \bigr) \bigl( C^2 + \mu^{-1} \bigr)
= k \bigl( BC + 2 \log(B) \bigr)^2.
\] 
Remark also that
for any $\theta \in \Theta_B^k$, with probability one,
\[
\min_{j \in \dlb 1, k \drb} \langle \theta_j, W \rangle 
\in \bigl[ 0, \lVert \Theta_B \rVert \lVert W \rVert_{\infty} \bigr] 
\subset \bigl[ 0, BC + 2 \log(B) \bigr]. 
\]
Use these bounds in Lemma \vref{lem:14.1} to conclude the proof.
\end{proof}

\section*{Acknowledgements}

We are grateful to Nikita Zhivotovskiy for useful
comments and references.

\bibliographystyle{imsart-number} 
\bibliography{ref11.bib}       


\end{document}